\theoremstyle{plain}
	\newtheorem{theorem}{Theorem}
	\newtheorem{proposition}[theorem]{Proposition}
	\newtheorem{lemma}[theorem]{Lemma}
	\newtheorem{corollary}[theorem]{Corollary}
\theoremstyle{definition}
	\newtheorem{definition}[theorem]{Definition}
	\newtheorem{notation}[theorem]{Notation}
\theoremstyle{remark}
	\newtheorem{remark}[theorem]{Remark}
\newcommand{\TrSig}{\sigma}
\newcommand{\InSig}{\tilde{\sigma}}
\newcommand{\SigSet}{\Sigma}
\newcommand{\InSigSet}{\tilde{\Sigma}}
\newcommand{\MulSet}{\mathsf{K}}
\newcommand{\ParSet}{\mathsf{X}}
\newcommand{\InParSet}{\tilde{\mathsf{X}}}
\newcommand{\De}{\mathcal{D}_e}
\newcommand{\Dgr}{\mathcal{D}_{\mathrm{gr}}}
\newcommand{\ZZ}{\mathbb{Z}}
\newcommand{\RR}{\mathbb{R}}
\newcommand{\CC}{\mathbb{C}}
\newcommand{\HH}{\mathbb{H}}
\newcommand{\FF}{\mathbb{F}}
\newcommand{\KK}{\mathbb{K}}
\newcommand{\cL}{\mathcal{L}}
\newcommand{\cR}{\mathcal{R}}
\newcommand{\cD}{\mathcal{D}}
\newcommand{\cV}{\mathcal{V}}
\newcommand{\supp}{\mathrm{supp}\,}
\newcommand{\rad}{\mathrm{rad}}
\newcommand{\Int}{\mathrm{Int}}
\newcommand{\End}{\mathrm{End}}
\newcommand{\Arf}{\mathrm{Arf}}
\begin{document}

\title[Gradings on classical central simple real Lie algebras]{Gradings on classical central simple\\ real Lie algebras}

\author[Y. Bahturin]{Yuri Bahturin${}^{\star}$}
\address{Department of Mathematics and Statistics,
 Memorial University of Newfoundland,
 St. John's, NL, A1C5S7, Canada}
\email{bahturin@mun.ca}
\thanks{${}^{\star}$supported by Discovery Grant 227060-2014 
of the Natural Sciences and Engineering Research Council (NSERC) of Canada}

\author[M. Kochetov]{Mikhail Kochetov${}^{\star\star}$}
\address{Department of Mathematics and Statistics,
 Memorial University of Newfoundland,
 St. John's, NL, A1C5S7, Canada}
\email{mikhail@mun.ca}
\thanks{${}^{\star\star}$supported by Discovery Grant 341792-2013
of the Natural Sciences and Engineering Research Council (NSERC) of Canada 
and a grant for visiting scientists by Instituto Universitario de Matem\'aticas y Aplicaciones, 
University of Zaragoza}

\author[A. Rodrigo-Escudero]{Adri\'an Rodrigo-Escudero${}^\dagger$}
\address{Departamento de Matem\'aticas e
Instituto Uni\-ver\-si\-ta\-rio de Matem\'aticas y Apli\-ca\-cio\-nes,
Universidad de Zaragoza, 50009 Zaragoza, Spain}
\email{adrian.rodrigo.escudero@gmail.com}
\thanks{${}^\dagger$supported by
a doctoral grant of the Diputaci\'on General de Arag\'on;
a three-month stay
with Atlantic Algebra Centre
at Memorial University of Newfoundland
was supported by the
Fundaci\'on Bancaria Ibercaja and Fundaci\'on CAI
(reference number CB 4/15)
and by the NSERC of Canada
(Discovery Grant 227060-2014);
also supported by
the Spanish Mi\-nis\-te\-rio de Econom\'ia y Competitividad
and Fondo Europeo de De\-sa\-rro\-llo Regional FEDER
(MTM2013-45588-C3-2-P)
and by the Diputaci\'on General de Arag\'on
and Fondo Social Europeo
(Grupo de Investigaci\'on de \'Algebra)}

\subjclass[2010]{Primary 17B70; Secondary 16W50, 16W10}

\keywords{Graded algebra, graded module, classical simple Lie algebra, real algebra, classification}

\date{}

\begin{abstract}
For any abelian group $G$, we classify up to isomorphism all $G$-gradings on the classical central simple Lie algebras,
except those of type $D_4$, over the field of real numbers (or any real closed field).
\end{abstract}

\maketitle

\section{Introduction}

Let $\cR$ be an algebra (not necessarily associative) over a field and let $G$ be a group. 
A \emph{$G$-grading} on $\cR$ is a vector space decomposition $\Gamma:\;\cR=\bigoplus_{g\in G}\cR_g$ 
such that $\cR_g\cR_h\subseteq\cR_{gh}$ for all $g,h\in G$. 
The nonzero elements $x\in\cR_g$ are said to be \emph{homogeneous of degree} $g$, which can be written as $\deg x=g$, 
and the \emph{support} of the grading $\Gamma$ (or of the graded algebra $\cR$) is the set $\{g\in G\;|\;\cR_g\neq 0\}$. 
The algebra $\cR$ may have some additional structure, for example, an involution $\varphi$, 
in which case $\Gamma$ is required to respect this structure: $\varphi(\cR_g)=\cR_g$ for all $g\in G$. 

Group gradings have been extensively studied for many types of algebras --- associative, Lie, Jordan, composition, etc. 
(see e.g. the recent monograph \cite{EK13} and the references therein). In the case of gradings on simple Lie algebras, 
the support generates an abelian subgroup of $G$, so it is no loss of generality to assume $G$ abelian. 
We will do so for all gradings considered in this paper.
 
The classification of fine gradings (up to equivalence) on all finite-dimensional simple Lie algebras 
over an algebraically closed field of characteristic $0$ has recently been completed by the efforts of many authors: 
see \cite{E10}, \cite[Chapters 3--6]{EK13}, \cite{DV16},
\cite{YuExc} and \cite{E14}.
The classification of all $G$-gradings (up to isomorphism) 
is also known for these algebras, except for types $E_6$, $E_7$ and $E_8$, over an algebraically closed field of 
characteristic different from $2$:
see \cite{BK10},
\cite[Chapters 3--6]{EK13} and \cite{EK15}.

On the other hand, group gradings on algebras over the field of real numbers have not yet been sufficiently studied. 
Fine gradings on real forms of the classical simple complex Lie algebras except $D_4$ were described in \cite{HPP3}, 
but the equivalence problem remains open. 
Fine gradings on real forms of the simple complex Lie algebras of types $G_2$ and $F_4$
were classified up to equivalence in \cite{CDM10}; some partial results were obtained for type $E_6$ in \cite{DG16}. 

Here we will classify up to isomorphism all $G$-gradings on real forms of the classical simple complex Lie algebras 
except $D_4$.
We do not use the topology of $\RR$, so in all of our results $\RR$ can be replaced by an arbitrary real closed field.
We will follow the approach that has already been used over algebraically closed fields \cite{BK10,EK13}: embed our Lie algebra $\cL$ into an associative algebra $\cR$ with involution $\varphi$ 
and transfer the classification problem for gradings from $\cL$ to $(\cR,\varphi)$ using automorphism group schemes. 
Specifically:

\begin{itemize}
\item $\cR=M_n(\RR)$, with $\varphi$ orthogonal (that is, given by a symmetric bilinear form) and $n$ odd, 
yield all real forms of series $B$ by taking $\cL=\mathrm{Skew}(\cR,\varphi) := \{x\in\cR \mid \varphi(x)=-x\}$;
\item $\cR=M_n(\RR)$, with $\varphi$ symplectic (that is, given by a skew-symmetric bilinear form over $\RR$, so $n$ must be even), 
and $\cR=M_n(\HH)$, with $\varphi$ symplectic (that is, given by a hermitian form over $\HH$), 
yield all real forms of series $C$ by taking $\cL=\mathrm{Skew}(\cR,\varphi)$;
\item $\cR=M_n(\RR)$, with $\varphi$ orthogonal (that is, given by a symmetric bilinear form over $\RR$) and $n$ even, and 
$\cR=M_n(\HH)$, with $\varphi$ orthogonal (that is, given by a skew-hermitian form over $\HH$), 
yield all real forms of series $D$ by taking $\cL=\mathrm{Skew}(\cR,\varphi)$;
\item $\cR=M_n(\CC)$, $M_n(\RR)\times M_n(\RR)$ and $M_n(\HH)\times M_n(\HH)$, with $\varphi$ of the second kind 
(that is, nontrivial on the center of $\cR$), 
yield all real forms of series $A$ by taking $\cL=\mathrm{Skew}(\cR,\varphi)'$, where prime denotes 
the derived Lie algebra.
\end{itemize}

With the exception of the last two, the above associative algebras $\cR$ are simple, so for any $G$-grading on $\cR$ we have 
$\cR\cong\mathrm{End}_\cD(\cV)$ as a graded algebra, where $\cD$ is a $G$-graded associative algebra
which is a \emph{graded division algebra} (that is, all nonzero homogeneous elements are invertible),
and $\cV$ is a graded right $\cD$-module which is finite-dimensional over $\cD$
(that is, has a finite basis consisting of homogeneous elements) --- see details in Section \ref{se:assoc_description}.
Note that the graded algebra $\cD$ is determined up to isomorphism and the graded module $\cV$ up to isomorphism and shift of grading.
By a \emph{shift of grading} we mean replacing $\cV$ by $\cV^{[g]}$ for some $g\in G$, where $\cV^{[g]}=\cV$ as a module, but with 
the elements of $\cV_h$ now having degree $gh$, for all $h\in G$. 
Over an algebraically closed field $\FF$, the identity component $\cD_e$ ($e$ being the identity element of $G$) 
must be $\FF$, so the involution on $\cR$ can be studied, as was done in \cite{BZ07,BK10}, 
using the tensor product decomposition $\cR\cong\mathrm{End}_{\FF}(V)\otimes_\FF\cD$ 
where $V$ is the $\FF$-span of a homogeneous $\cD$-basis of $\cV$.
Here we will follow a graded version of the classical approach, as was done in \cite{E10,EK13}, 
and write the involution on $\cR$ in terms of an involution on $\cD$ and a sesquilinear form on $\cV$.

As to the algebras $\cR=M_n(\RR)\times M_n(\RR)$ and $\cR=M_n(\HH)\times M_n(\HH)$, we have two types of $G$-gradings: 
if the center $Z(\cR)$ is trivially graded then we will say that the grading is of \emph{Type~I}, and otherwise of \emph{Type~II}. 
In the case of a Type~II grading, $\cR$ is \emph{graded-simple} (that is, simple as a graded algebra: it has no nonzero proper graded ideals) 
and can apply the same approach as above.
In the case of a Type~I grading, we will use alternative models: $\cR=M_n(\RR)$ and $\cR=M_n(\HH)$, where $\cL=\cR'$ 
(traceless matrices). In these models, there is no involution on $\cR$, so they are treated separately.
Note that we also have Type~I and Type~II gradings for $M_n(\CC)$, but we do not have to use a different approach 
for Type~I gradings.

Finite-dimensional graded division algebras over $\RR$ that are simple as ungraded algebras
and have an abelian grading group have recently been classified
in \cite{R16}, both up to isomorphism and up to equivalence,
and independently in \cite{BZ16}, up to equivalence
(but note that one of the equivalence classes was overlooked).
A classification up to equivalence has been obtained in \cite{BZpr} without assuming simplicity.

For our purposes, we will also need some information about (degree-preserving) involutions 
on a graded division algebra $\cD$ that is isomorphic to $M_\ell(\RR)$, $M_\ell(\CC)$, $M_\ell(\HH)$, 
$M_\ell(\RR)\times M_\ell(\RR)$ or $M_\ell(\HH)\times M_\ell(\HH)$ as an ungraded algebra, 
where $\ell$ is a divisor of $n$.
A classification of such graded division algebras with involution is given in \cite{BRpr}.

The paper is structured as follows. In Section \ref{se:assoc_description}, we establish some properties of 
(associative) graded division algebras with involution over $\RR$, which may be of independent interest. 
We do not restrict ourselves to the finite-dimensional case:
we only assume that $\De$ is finite-dimensional. About the involution, we assume that the only symmetric elements 
in $Z(\cD)_e$ are the scalars (in other words, $\cD$ is central as a graded algebra with involution). 
The main result (Theorem \ref{th:struct}) is a structure theorem for 
central graded-simple associative algebras with involution $(\cR,\varphi)$ over $\RR$,
under certain finiteness assumptions.
The structure of $(\cR,\varphi)$ is described by a graded division algebra with involution $(\cD,\varphi_0)$, 
an element $g_0\in G$, a sign $\delta\in\{\pm 1\}$, and two functions, $\kappa:G/T\to\ZZ_{\ge 0}$ and $\TrSig:G/T\to\ZZ$,
satisfying certain conditions (see Definitions \ref{def:MultFunct} and \ref{def:SignFunct}),
where $T$ is the support of $\cD$ (a subgroup of $G$).

In Section \ref{se:assoc_classification}, we solve the isomorphism problem for graded algebras with involution 
determined by these data (Theorem \ref{th:iso}), under a certain technical restriction in the case 
$\CC\cong\De\subseteq Z(\cD)$. In particular, if $\cD$ is isomorphic to 
$M_\ell(\RR)$, $M_\ell(\CC)$, $M_\ell(\HH)$, $M_\ell(\RR)\times M_\ell(\RR)$ or $M_\ell(\HH)\times M_\ell(\HH)$ 
as an ungraded algebra and $\varphi_0$ is of the second kind in the cases with $Z(\cD)\ne\RR$,
then the classification boils down to orbits for an action of $G$ on triples $(g_0,\kappa,\TrSig)$ 
(see Corollaries \ref{cor:1}, \ref{cor:2} and \ref{cor:3}). 
This action is associated with the shift of grading mentioned above.

In Section \ref{se:Lie}, we transfer the results to classical Lie algebras: 
Theorems \ref{th:AI_RH} and \ref{th:AI_C} for Type~I (that is, inner) gradings on real forms of series $A$, 
Theorems \ref{th:AII_RH} and \ref{th:AII_C} for Type~II (that is, outer) gradings on real forms of series $A$, 
and Theorems \ref{th:B}, \ref{th:C} and \ref{th:D} for series $B$, $C$ and $D$, respectively. 
In particular, we determine which of the real forms arises from given data describing $(\cR,\varphi)$.

Finally, in the Appendix, we show, under a certain condition, how a $G$-grading of Type II 
on one of the special linear Lie algebras $\mathrm{Skew}(\cR,\varphi)'$, 
where $\cR=M_n(\Delta)\times M_n(\Delta)$ and $\Delta\in\{\RR,\HH\}$, 
can be realized in terms of the model $\mathfrak{sl}_n(\Delta)$. 
The condition is satisfied, for example, if $G$ is an elementary $2$-group. 
We also consider Type II gradings on simple Lie algebras of series A over an algebraically closed field $\FF$, 
$\mathrm{char}\,\FF\ne 2$, which were treated in \cite{BK10,EK13} in terms of the model $\mathfrak{sl}_n(\FF)$ 
(modulo the $1$-dimensional center if $\mathrm{char}\,\FF$ divides $n$).
We show how the model $\mathrm{Skew}(\cR,\varphi)'$ (modulo the center), 
where $\cR=M_n(\FF)\times M_n(\FF)$, gives a different --- and simpler --- parametrization of Type II gradings.

For any abelian group $G$, we will use the notation $G_{[2]}:=\{g\in G \mid g^2=e\}$ 
and $G^{[2]}:=\{g^2 \mid g\in G\}$.

\section{Involutions on graded-simple real associative algebras}\label{se:assoc_description}

\subsection{Graded-simple real associative algebras}

Let $\mathcal{R}$ be a $G$-graded real associative algebra
that is graded-simple and satisfies the descending chain condition on graded left ideals.
By \cite[Theorem 2.6]{EK13} (compare with \cite[Theorem 2.10.10]{NVO}), $\mathcal{R}$ is isomorphic to 
$\mathrm{End}_{\mathcal{D}} (\mathcal{V}) $,
where $\mathcal{D}$ is a $G$-graded real associative algebra
which is a graded division algebra,
and $\mathcal{V}$ is a graded right $\mathcal{D}$-module
which is finite-dimensional over $\mathcal{D}$.
For an invertible $d\in\cD$,
we will denote by $\mathrm{Int}(d)$ the corresponding inner automorphism of $\cD$:
$ \mathrm{Int}(d)(x):=dxd^{-1} $.

Let $ T \subseteq G $ be the support of $\mathcal{D}$.
We fix a transversal for the subgroup $T$ in $G$,
so we have a section $ \xi : G/T \rightarrow G $
(which is not necessarily a group homomorphism)
of the quotient map $ \pi : G \rightarrow G/T $.
We can write $ \mathcal{V} = \mathcal{V}_1 \oplus \ldots \oplus \mathcal{V}_s $ 
where $\mathcal{V}_i$ are the isotypic components
of the graded right $\mathcal{D}$-module $\mathcal{V}$.
Each of these components is determined by its support,
which is a coset $ x_i \in G/T $, and its $\mathcal{D}$-dimension $k_i$.
Thus, $\mathcal{V}$ is determined by the multiset
$ \kappa : G/T \rightarrow \mathbb{Z}_{\geq 0} $
whose underlying set is $ \{ x_1 , \ldots , x_s \} $
and the multiplicity of $x_i$ is $ \kappa(x_i) = k_i $.
Let $ g_i := \xi(x_i) $.
Then $\mathcal{V}_{g_i}$ is a (right) vector space over the division algebra $\De$
and $ \mathcal{V}_i = \mathcal{V}_{g_i} \otimes_{\De} \mathcal{D} $;
hence $ \mathcal{V} = V \otimes_{\De} \mathcal{D} $,
with $ V := \mathcal{V}_{g_1} \oplus \ldots \oplus \mathcal{V}_{g_s} $.
Picking a basis $\mathcal{B}_i$ in each $\cV_{g_i}$,
we may identify $\mathrm{End}_\cD(\cV)$ with the algebra of matrices $M_k(\cD)$,
where $k=\vert\kappa\vert:=k_1+\cdots+k_s$.
The matrices are partitioned into $s^2$ blocks according to
the partition of the basis $\mathcal{B}_1\cup\ldots\cup\mathcal{B}_s$,
and the $G$-grading is given by
\begin{equation}\label{eq:MatrGrad}
\deg(E\otimes d)=g_i t g_j^{-1}
\end{equation}
for any matrix unit $E$ in the $(i,j)$-th block and any $d\in\cD_t$.
Here we are using the Kronecker product to identify
$M_k(\cD)$ with $M_k(\RR)\otimes_{\RR}\cD$.

Therefore, the data that determines the graded algebra $\cR$ is $(\cD,\kappa)$.
Conversely, any pair $(\cD,\kappa)$,
where $\cD$ is a graded-division real associative algebra,
and $ \kappa : G/T \rightarrow \mathbb{Z}_{ \geq 0 } $ has finite support,
determines, by means of Equation \eqref{eq:MatrGrad},
a graded-simple real associative algebra $M_k(\cD)$
that satisfies the descending chain condition on graded left ideals
(see \cite[Section 2.1]{EK13}).

We will assume throughout that the identity component $\cR_e$ is finite-dimensional.
From the above matrix description it is clear that
$\cR_e$ is a finite-dimensional vector space over the division algebra $\De$
(as $\cR_e$ consists of the diagonal blocks with entries in $\De$).
Thus, our assumption is tantamount to $\De$ having finite dimension,
which makes it isomorphic to $\mathbb{R}$, $\mathbb{C}$ or $\mathbb{H}$.

\subsection{Properties of the graded-division algebra}

We will now record for future reference some elementary properties of $\cD$.
First of all, for each $ t \in T $, pick $ 0 \neq X_t \in \mathcal{D}_t $.
Then $\cD_t=\De X_t=X_t\De$.
If $ \De \cong \mathbb{H} $, we can take $ X_t \in C_{\mathcal{D}}(\De) $.
Indeed, $\mathrm{Int}(X_t)$ restricts to an automorphism of $\De$,
which is inner since $\De$ is central simple.
Replacing $X_t$ by $X_tq$ for a suitable $q\in\De$, we get the result.
It also follows from the Double Centralizer Theorem
(see for instance \cite[Theorem 4.7]{Jac89}),
which says that we have an isomorphism of algebras
\[ \mathcal{D} \cong \De \otimes_{\mathbb{R}} C_{\mathcal{D}} (\De) \]
given by multiplication in $\cD$.
We will always assume that $X_t$ are chosen in $C_{\mathcal{D}}(\De)$ in the case $\De\cong \mathbb{H}$.
Of course, this is automatic in the case $\De= \mathbb{R}$.
In these cases, define $\beta:T\times T\to Z(\De)^\times = \mathbb{R}^\times $ by
\begin{equation}\label{def:beta}
X_sX_t=\beta(s,t) X_t X_s.
\end{equation}
Clearly, this is an alternating bicharacter that does not depend on the scaling of $X_t$
and is therefore an invariant of the graded algebra $\cD$.

As to the remaining case $\De\cong \mathbb{C}$, let
\[ K := \{ t \in T \mid \mathrm{Int} (X_t) \vert_{\De} = \mathrm{id}_{\De} \}. \]
This is a subgroup in $T$ of index $1$ or $2$,
and it does not depend on the choice of $X_t$.
Equation \eqref{def:beta} gives a well defined alternating bicharacter $\beta:K\times K\to\De^\times\cong\mathbb{C}^\times$.
It is convenient to set $K:=T$ in the cases $ \De = \mathbb{R} $ and $ \De \cong \mathbb{H} $, so in all cases 
we have $\beta:K\times K\to Z(\De)^\times$.

\begin{lemma}[{\cite[Lemma 3.3]{E10}}]\label{lem:InnAut}
Let $\alpha$ be an inner automorphism of $\cD$ that preserves degree.
Then, there exists a nonzero homogeneous $ d \in \cD $ such that $ \alpha = \Int(d) $.
\end{lemma}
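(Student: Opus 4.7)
The plan is to take any element $c\in\cD^\times$ with $\alpha=\Int(c)$ (which exists by the hypothesis that $\alpha$ is inner), decompose it into homogeneous components, and show that each nonzero component already implements $\alpha$.

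Concretely, I would write $c=\sum_{g\in G}c_g$ with $c_g\in\cD_g$. For any homogeneous $x\in\cD_h$ the relation $cx=\alpha(x)c$ reads
\[
\sum_{g\in G}c_g x=\sum_{g\in G}\alpha(x)c_g.
\]
Since $\alpha$ is degree-preserving, $\alpha(x)\in\cD_h$, so $c_g x$ and $\alpha(x)c_g$ both lie in $\cD_{gh}=\cD_{hg}$ (using that $G$ is abelian, which is a standing assumption throughout the paper). Comparing homogeneous components of degree $gh$ then yields
\[
c_g x=\alpha(x)c_g\quad\text{for every }g\in G.
\]

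Now pick any $g$ with $c_g\neq 0$. Because $\cD$ is a graded division algebra, every nonzero homogeneous element is invertible, so $c_g\in\cD^\times$. The previous identity rewrites as $\alpha(x)=c_g x c_g^{-1}$ for every homogeneous $x$. Since homogeneous elements span $\cD$ and both $\alpha$ and $\Int(c_g)$ are $\RR$-linear, we conclude $\alpha=\Int(c_g)$, so $d:=c_g$ does the job.

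There is no serious obstacle here: the argument is the standard ``homogeneous component'' trick, and its validity relies only on two features already in place, namely that the grading group is abelian (so that $cx=\alpha(x)c$ splits component-wise under the degree grading) and that nonzero homogeneous elements of $\cD$ are automatically invertible. If I wanted to streamline the write-up I could simply observe that the map $c\mapsto\Int(c)$ from $\cD^\times$ to $\operatorname{Aut}(\cD)$ has the property that its fiber over any degree-preserving automorphism is stable under projection onto each homogeneous component, and then invoke graded-division invertibility.
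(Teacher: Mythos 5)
Your proposal is correct and follows essentially the same route as the paper: decompose the implementing element into homogeneous components, use degree preservation and commutativity of $G$ to match components of the identity $c\,x=\alpha(x)\,c$, and invoke invertibility of nonzero homogeneous elements in the graded division algebra. The only cosmetic difference is that the paper extracts just one component (the one of degree $hh_1$) rather than all of them before picking a nonzero one.
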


\begin{proof}
By definition, there exists $ d' \in \cD^{\times} $ such that $ \alpha = \Int(d') $,
that is, $ \alpha(x) d' = d' x $ for all $ x \in \cD $.
Write $ d' = d_1 + \cdots + d_n $ where the $d_i$ are
nonzero homogeneous elements of pairwise distinct degrees $h_i$.
As $\alpha$ preserves dregree,
if $ x \in \cD $ is homogeneous of degree $h$, so is $\alpha(x)$.
Since $G$ is abelian,
if we consider the terms of degree $ h h_1 $ in $ \alpha(x) d' = d' x $,
we get $ \alpha(x) d_1 = d_1 x $.
But $d_1$ is invertible because it is homogeneous,
so $ \alpha(x) = d_1 x d_1^{-1} $ for all $ x \in \cD_h $ and $ h \in G $.
\end{proof}

\begin{lemma}\label{lem:cocycle}
The group of automorphisms of the graded algebra $\cD$ that act trivially on $\De$
is isomorphic to $ Z^1 (T, Z(\De)^{\times} ) $,
where $t\in T$ acts on $Z(\De)^{\times}$ on the right by
$ z^t = X_t^{-1} z X_t $ for all $ z \in Z(\De)^{\times} $.
\end{lemma}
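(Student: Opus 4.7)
My plan is to write the isomorphism down explicitly. Since any degree-preserving automorphism $\alpha$ of $\cD$ that fixes $\De$ pointwise must send $X_t\in\cD_t=X_t\De$ into $\cD_t$, there exists a unique $g(t)\in\De^\times$ with $\alpha(X_t)=X_t g(t)$. The map $\alpha\mapsto g$ will be the candidate isomorphism onto $Z^1(T,Z(\De)^\times)$.

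The first step is to show $g(t)\in Z(\De)$. Applying $\alpha$ to the identity $X_t d=(X_t d X_t^{-1})X_t$ for $d\in\De$, and using that $X_t d X_t^{-1}\in\De$ is fixed by $\alpha$, the two expressions for the image in $\cD_t$ can be compared to yield $g(t)d=dg(t)$. The next step is the cocycle relation: choose $c(s,t)\in\De^\times$ with $X_s X_t=c(s,t)X_{st}$, then compute $\alpha(X_sX_t)$ two ways. Using $g(s)X_t=X_t(X_t^{-1}g(s)X_t)=X_t g(s)^t$ together with the fact that $c(s,t)\in\De$ commutes with every element of $Z(\De)$, the computation will collapse to $g(st)=g(s)^t g(t)$, which is the desired right-cocycle identity.

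That $\alpha\mapsto g$ is a group homomorphism follows from centrality of the values $g(t)$: for $\alpha,\alpha'$ corresponding to $g,g'$, one has $(\alpha\circ\alpha')(X_t)=\alpha(X_t g'(t))=X_t g(t)g'(t)$, matching pointwise multiplication in $Z^1$. Injectivity is clear because $\alpha$ is determined on each $\cD_t=X_t\De$ by its value on $X_t$ together with $\alpha|_{\De}=\mathrm{id}$. For surjectivity, given any cocycle $g$, define $\alpha$ by $\alpha(d X_t):=d X_t g(t)$ for $d\in\De$ and $t\in T$; the cocycle identity combined with $g(t)\in Z(\De)^\times$ is exactly what is needed to verify multiplicativity, while invertibility of each $g(t)$ yields bijectivity on every $\cD_t$.

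The only mildly delicate point is establishing the centrality of $g(t)$, which hinges on the identity $X_t\De X_t^{-1}=\De$ coming from the graded-division structure; everything else is routine bookkeeping once one fixes the convention of writing $g(t)$ on the right of $X_t$, which is what aligns the resulting identity with the prescribed right action $z^t=X_t^{-1}zX_t$ rather than its left analogue.
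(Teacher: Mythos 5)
Your proposal is correct and follows essentially the same route as the paper: write $\alpha(X_t)=X_t g(t)$, deduce $g(t)\in Z(\De)^\times$ from $\alpha|_{\De}=\mathrm{id}$, and extract the right-cocycle identity $g(st)=g(s)^t g(t)$ by applying $\alpha$ to a product $X_sX_t$. The extra details you supply (the explicit centrality computation, the homomorphism/injectivity/surjectivity checks) are exactly the steps the paper leaves as ``clear.''
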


\begin{proof}
Let $\psi_0$ be such an automorphism.
Then $ \psi_0(X_t) = X_t \nu(t) $, with $ \nu(t) \in \De^{\times} $,
and the condition $\psi_0\vert_{\De}=\mathrm{id}_{\De}$ 
implies $\Int(X_t)\vert_{\De}=\Int(\psi_0(X_t))\vert_{\De}$, hence $\nu(t)\in C_\cD(\De)$.
Therefore, we have $ \nu : T \rightarrow Z(\De)^{\times} $.
Applying $\psi_0$ to both sides of the equation $ X_s X_t = X_{st} \lambda $
($ \lambda \in \De^{\times} $),
we get $ X_s \nu(s) X_t \nu(t) = X_{st} \nu(st) \lambda $, so
\[ \nu(s)^t \nu(t) = \nu(st) \]
where the action is trivial for $ \De = \mathbb{R} $ and $ \De \cong \mathbb{H} $,
but may be nontrivial for $ \De \cong \mathbb{C} $ (recall $K$).
In other words, $\nu$ is a (right) $1$-cocycle.
The converse is also clear. Finally, if $1$-cocycles $\nu$ and $\nu'$ correspond
to $\psi_0$ and $\psi_0'$, respectively, then the product $\nu\nu'$ corresponds to
the composition $\psi_0\psi_0'$.
\end{proof}

If $ \De = \mathbb{R} $, $ \De \cong \mathbb{H} $
or $ \mathbb{C} \cong \De \subseteq Z(\cD) $ (that is, $K=T$),
then the action is trivial, so
$ Z^1 (T, Z(\De)^{\times} ) = \mathrm{Hom} (T, Z(\De)^{\times} ) $.

\begin{lemma}\label{lem:sses}
If $ \mathbb{C} \cong \De \not\subseteq Z(\cD) $,
then there is a split short exact sequence:
\[ 1 \longrightarrow U \longrightarrow
Z^1 (T, \De^{\times} )
\overset{\mathrm{res}}{\longrightarrow}
\mathrm{Hom}^+ (K, \mathbb{R}^{\times} ) \longrightarrow 1 \]
where $ \mathrm{Hom}^+ (K, \mathbb{R}^{\times} ) :=
\{ \chi \in \mathrm{Hom} (K, \mathbb{R}^{\times} ) \mid \chi(t^2) > 0 , \, \forall t \in T \} $,
$U$ is the unit circle in $\mathbb{C}$,
and $\mathrm{res}$ is the restriction map.
\end{lemma}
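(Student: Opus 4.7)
The plan is to directly analyze cocycles $\nu : T \to \CC^\times$ of the kind produced by Lemma~\ref{lem:cocycle}, using the fact that when $\CC \cong \De \not\subseteq Z(\cD)$ the subgroup $K$ has index $2$ in $T$: elements of $K$ act trivially on $\De^\times = \CC^\times$, while any $t \in T \setminus K$ acts as complex conjugation, since $\Int(X_t)$ restricts to the unique nontrivial $\RR$-automorphism of $\CC$.

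First I would verify that $\mathrm{res}$ is well defined. Because $K$ acts trivially, $\nu|_K$ is an ordinary homomorphism $K \to \CC^\times$. To show its image lies in $\RR^\times$, I would use that $T$ is abelian: for any $k \in K$ and $t \in T \setminus K$, comparing $\nu(kt) = \overline{\nu(k)}\nu(t)$ with $\nu(tk) = \nu(t)\nu(k)$ forces $\nu(k) = \overline{\nu(k)}$. The positivity condition $\nu(t^2) > 0$ for every $t \in T$ is then immediate from the cocycle relation: for $t \notin K$, $\nu(t^2) = \nu(t)^{t}\nu(t) = \overline{\nu(t)}\nu(t) = |\nu(t)|^2$, while for $t \in K$ it is $\nu(t)^2 \in \RR_{>0}$.

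Next I would identify the kernel. Fix $t_0 \in T \setminus K$. If $\nu|_K = 1$, the cocycle relation gives $\nu(kt_0) = \overline{\nu(k)}\,\nu(t_0) = \nu(t_0)$ for all $k \in K$, so $\nu$ is constant on $T \setminus K$, while $\nu(t_0^2) = |\nu(t_0)|^2$ together with $\nu(t_0^2) = 1$ forces $\nu(t_0) \in U$. Conversely, for any $c \in U$ the assignment $\nu(k) := 1$, $\nu(kt_0) := c$ defines an element of $Z^1(T, \CC^\times)$, as one checks by splitting the cocycle relation into the four cases according to which cosets of $K$ contain $s$ and $t$. The resulting map $U \hookrightarrow Z^1(T, \CC^\times)$ is a group homomorphism, and its image is $\ker(\mathrm{res})$.

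For surjectivity together with the splitting, given $\chi \in \mathrm{Hom}^+(K, \RR^\times)$ I would set $\nu(k) := \chi(k)$ on $K$ and $\nu(kt_0) := \chi(k)\sqrt{\chi(t_0^2)}$ on $T \setminus K$, where the positive real square root is used (allowed since $\chi(t_0^2) > 0$). A four-case verification --- the critical case being $s,t \in T \setminus K$, where both sides of the cocycle relation evaluate to $\chi(k_s)\chi(k_t)\chi(t_0^2)$ --- shows $\nu$ is a cocycle restricting to $\chi$. Because the positive square root is multiplicative on $\RR_{>0}$, the map $\chi \mapsto \nu$ is a group homomorphism, hence a section of $\mathrm{res}$. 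The main technical obstacle is the bookkeeping in the four-case cocycle check, together with isolating the reason that $\mathrm{res}$ lands in $\mathrm{Hom}^+(K, \RR^\times)$ rather than in $\mathrm{Hom}(K, \CC^\times)$: it is precisely the nontriviality of the $T$-action on the coset $T \setminus K$, via the commutativity argument above, that forces $\nu|_K$ to be real-valued and positive on squares.
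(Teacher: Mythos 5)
Your proof is correct and follows essentially the same route as the paper's: the same commutativity argument ($\nu(kt)=\nu(tk)$) to force $\nu|_K$ real-valued, the same computation $\nu(t^2)=|\nu(t)|^2$ for positivity on squares, the same identification of the kernel with $U$ via constancy on $T\setminus K$, and the same explicit section $\chi\mapsto\nu_\chi$ using the positive square root of $\chi(t_0^2)$. No gaps.
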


\begin{proof}
Let $ \nu \in Z^1 (T, \De^{\times} ) $ and pick $ t \in T \setminus K $.
For all $ s \in K $, we have
$ \overline{\nu(s)} \nu(t) = \nu(st) = \nu(ts) = \nu(t) \nu(s) $,
so $ \nu(s) \in \mathbb{R}^{\times} $.
For any $t\in T$,
we have $ \nu(t^2) = \nu(t)^t\nu(t)=\vert\nu(t)\vert^2 > 0 $,
since $\nu(t)^t=\nu(t)\in\RR$ if $ t \in K $
and $\nu(t)^t=\overline{\nu(t)}$ if $ t \in T \setminus K $.
Thus, the above restriction map is well defined.

Now take $\nu$ in the kernel of the restriction map,
that is, such that $ \nu(s) = 1 $ for all $ s \in K $.
If $ t \in T \setminus K $,
then $ \nu(ts) = \nu(t)^s \nu(s) = \nu(t) $,
so $\nu$ is constant on $ T \setminus K $.
Moreover, if $ t \in T \setminus K $ then $ t^2 \in K $,
so $ 1 = \nu(t^2) = \vert \nu(t) \vert^2 $.
Conversely, for any $ z \in U $, the map $\nu$,
defined by $ \nu(t) = 1 $ if $ t \in K $ and $ \nu(t) = z $ if $ t \in T \setminus K $,
is a $1$-cocycle.
Therefore, the kernel is isomorphic to $U$.

Finally, to construct a section of the restriction map,
we fix $ t \in T \setminus K $.
Then, for any $ \chi \in \mathrm{Hom}^+ (K, \mathbb{R}^{\times} ) $
define $ \nu_{\chi}(s) = \chi(s) $ and
$ \nu_{\chi}(ts) = \sqrt{\chi(t^2)} \chi(s) $ for all $ s \in K $.
It is easy to check that $\nu_{\chi}$ is a $1$-cocycle,
and the mapping $\chi\mapsto\nu_{\chi}$ is
a homomorphism of groups and a section of the restriction map.
This proves that $\mathrm{res}$ is surjective
and the above sequence splits (noncanonically).
\end{proof}

\begin{lemma}\label{lem:suprad}
The center $Z(\cD)$ is a graded subalgebra with support
\[ \mathrm{rad}\beta := \{ s \in K \mid \beta(s,t)=1 , \, \forall t \in K \}. \]
Except possibly in the case $ \mathbb{C} \cong \De \subseteq Z(\cD) $,
$\beta$ takes values in $\mathbb{R}^{\times}$.
\end{lemma}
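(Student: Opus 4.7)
My plan is to verify the two assertions separately, reducing throughout to the structure $\cD_t=\De X_t$ and the commutation relation \eqref{def:beta}.

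First, $Z(\cD)$ is a graded subalgebra by the standard argument: writing $x=\sum_g x_g\in Z(\cD)$, the identity $xy=yx$ for homogeneous $y\in\cD_h$ decomposes into components of degree $gh$ (using $G$ abelian), so each $x_g$ is itself central. To locate the support, I would take a nonzero $u=aX_t\in\cD_t\cap Z(\cD)$ with $a\in\De$. Commutation with $\De$ forces $a\in Z(\De)$, and in the case $\De\cong\CC$ it additionally forces $t\in K$: for $t\notin K$ and $b\in\De$, the relations $ub=a\bar b X_t$ and $bu=abX_t$ give $a(\bar b-b)=0$, so $a=0$ upon choosing $b=i$. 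Commutation of $u=aX_t$ with $X_s$ for $s\in K$ then reduces to $\beta(s,t)=1$. Thus the support lies in $\mathrm{rad}\,\beta$.

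For the reverse inclusion, in the cases $\De=\RR$, $\De\cong\HH$, and $\De\cong\CC\subseteq Z(\cD)$, we have $K=T$, so $X_t$ itself (for $t\in\mathrm{rad}\,\beta$) is central. The main obstacle is the case $\De\cong\CC$ with $\De\not\subseteq Z(\cD)$, where one must additionally arrange commutation with $X_s$ for $s\in T\setminus K$. Writing $X_sX_tX_s^{-1}=\lambda(s,t)X_t$ with $\lambda(s,t)\in\CC^\times$, the condition becomes $\lambda(s,t)=a/\bar a$. For $t\in\mathrm{rad}\,\beta$ I would show two facts: (i) $\lambda(\cdot,t)$ is constant on $T\setminus K$ --- fix $s_0\in T\setminus K$ and, for any $s\in T\setminus K$, write $X_s=\alpha X_{s_0}X_r$ with $r\in K$ and $\alpha\in\CC^\times$; then $\beta(r,t)=1$ together with the fact that $X_t$ commutes with $\alpha$ (since $t\in K$) yields $\lambda(s,t)=\lambda(s_0,t)$; and (ii) $|\lambda(s_0,t)|=1$ --- iterate $\mathrm{Int}(X_{s_0})$ on $X_t$ to obtain $|\lambda(s_0,t)|^2 X_t$, and identify with $\mathrm{Int}(X_{s_0^2})(X_t)=\beta(s_0^2,t)X_t=X_t$ (using $s_0^2\in K$ and $t\in\mathrm{rad}\,\beta$). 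Any $a\in\CC$ with $a/\bar a=\lambda(s_0,t)$ then yields a nonzero central element $aX_t\in\cD_t$.

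Finally, for the claim that $\beta$ takes values in $\RR^\times$: the cases $\De=\RR$ and $\De\cong\HH$ are immediate since $Z(\De)=\RR$. In the case $\De\cong\CC$ with $\De\not\subseteq Z(\cD)$, pick any $r\in T\setminus K$ and apply the $\CC$-antilinear automorphism $\mathrm{Int}(X_r)$ to $X_sX_t=\beta(s,t)X_tX_s$ for $s,t\in K$. Since $\mathrm{Int}(X_r)$ sends $X_s$ and $X_t$ to $\CC^\times$-multiples of themselves and $\CC$ is commutative, all scalars rearrange and cancel, leaving $\beta(s,t)=\overline{\beta(s,t)}$, hence $\beta(s,t)\in\RR^\times$.
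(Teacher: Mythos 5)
Your proof is correct, and it follows the same overall strategy as the paper's for the hard direction (the reverse inclusion $\rad\beta\subseteq\supp Z(\cD)$ when $\CC\cong\De\not\subseteq Z(\cD)$): in both arguments the central element of $\cD_t$ is manufactured as $aX_t$ with $a/\bar{a}$ equal to a unimodular scalar recording how conjugation by the elements of $T\setminus K$ twists $X_t$. The difference is in the supporting machinery. The paper leans on Lemmas \ref{lem:cocycle} and \ref{lem:sses}: the automorphism $\Int(X_s)$ for $s\in K$ is encoded by a $1$-cocycle $\nu$, whose restriction to $K$ is real-valued (giving $\beta(K\times K)\subseteq\RR^\times$) and which, for $s\in\rad\beta$, lies in the kernel $U$ of the restriction map, hence is a single unimodular constant on $T\setminus K$. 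You re-derive exactly these two facts by hand --- constancy of $\lambda(\cdot,t)$ on $T\setminus K$ via the factorization $X_s=\alpha X_{s_0}X_r$, and $|\lambda(s_0,t)|=1$ via $\Int(X_{s_0})^2=\Int(X_{s_0}^2)$ with $s_0^2\in K$ --- which makes the proof self-contained at the cost of some repetition. Your argument that $\beta$ is real-valued is genuinely different and arguably cleaner: applying the $\CC$-antilinear automorphism $\Int(X_r)$, $r\in T\setminus K$, to the defining relation $X_sX_t=\beta(s,t)X_tX_s$ directly yields $\beta(s,t)=\overline{\beta(s,t)}$, whereas the paper extracts this from the cocycle identity in Lemma \ref{lem:sses}. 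One small elision worth noting: in step (ii) you should say that $X_{s_0}^2$ is only a $\De^\times$-multiple of $X_{s_0^2}$, but since $t\in K$ conjugation by elements of $\De$ fixes $X_t$, so $\Int(X_{s_0}^2)(X_t)=\Int(X_{s_0^2})(X_t)$ as needed.
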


\begin{proof}
As $G$ is abelian, the center $Z(\mathcal{D})$ is a graded subalgebra,
because $ x = \sum_{u \in G} x_u \in Z(\mathcal{D}) $ if and only if
$ x y = y x $ for all $ y \in \mathcal{D}_v $ and $v \in G$,
if and only if $ x_u y = y x_u $ for all $ y \in \mathcal{D}_v $ and $ u,v \in G$,
if and only if $ x_u \in Z(\mathcal{D}) $ for all $ u \in G$.

If $K=T$, the assertion about the support is clear from Equation \eqref{def:beta};
also $Z(\De)=\RR$ unless $\mathbb{C} \cong \De \subseteq Z(\cD)$. 
Now assume that $ K \neq T $ and consider $\mathrm{Int}(X_s)$ for some $ s \in K $.
This is an automorphism that acts trivially on $\De$,
so by Lemma \ref{lem:cocycle},
$\mathrm{Int}(X_s)(X_t) = X_t \nu(t) $ for some $1$-cocycle $\nu$.
In particular, $\beta(s,t)=\nu(t)$, $t\in K$,
takes values in $\mathbb{R}^{\times}$ by Lemma \ref{lem:sses}.
By definition of $K$ and $\beta$, we have $\supp Z(\cD)\subseteq\rad\beta$.
To prove the opposite inclusion, assume that $ s \in \rad\beta $.
Then the corresponding $\nu$ is in the kernel of the restriction map,
so $ \nu(t) = z \in U $ for all $ t \in T \setminus K $.
Pick $ w \in \De \cong \mathbb{C} $ such that $ w \overline{w}^{-1} = z $,
then $ X_s w $ commutes with the homogeneous elements of degree $t$,
so it lies in the center of $\mathcal{D}$.
\end{proof}

\subsection{Involutions}

Now suppose $\varphi$ is an involution on the graded algebra $\mathcal{R}$.
By \cite[Theorem 2.57]{EK13}, $\varphi$ corresponds to a pair $ ( \varphi_0 , B ) $,
where $\varphi_0$ is an antiautomorphism of the graded algebra $\mathcal{D}$,
and $ B : \mathcal{V} \times \mathcal{V} \rightarrow \mathcal{D} $
is a nondegenerate homogeneous $\varphi_0$-sesquilinear form 
(that is, linear over $\cD$ with respect to the second variable and $\varphi_0$-semilinear over $\cD$ with respect to the first variable).
The correspondence is given by
\begin{equation}\label{eq:GradAdj}
B ( rv , w ) = B ( v , \varphi(r) w )
\end{equation}
for all $ v,w \in \mathcal{V} $ and $ r \in \mathcal{R} $.
Note that $ ( \varphi_0 , B ) $ is unique up to the following operation:
$B$ can be replaced by $dB$, where $d$ is a nonzero homogeneous element of $\mathcal{D}$,
and $\varphi_0$ is simultaneously replaced by $ \mathrm{Int}(d) \varphi_0 $.
The pair $ ( \varphi_0^{-1} , \overline{B} ) $,
where $ \overline{B}(v,w) := \varphi_0^{-1} (B(w,v)) $ for all $ v,w \in \mathcal{V} $,
determines $ \varphi^{-1} = \varphi $.
Hence, there exists a nonzero homogeneous element $ \delta \in \mathcal{D} $ such that
$ \overline{B} = \delta B $ and $ \varphi_0^{-1} = \mathrm{Int}(\delta) \varphi_0 $.
Clearly $ \delta \in \De $.

We will assume throughout that $\mathcal{R}$ is central as a graded algebra with involution, that is,
$ \mathrm{Sym} (Z(R)_e,\varphi) := \{ r \in Z(R)_e \mid \varphi(r) = r \} = \mathbb{R} $.

\begin{lemma}\label{lem:Delta}
$B$ can be chosen so that $ \delta \in \{ \pm 1 \} $
(hence $ \varphi_0^2 = \mathrm{id}_{\mathcal{D}} $)
and $\varphi_0$ restricts to the conjugation on $\De$.
\end{lemma}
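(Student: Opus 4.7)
The plan is to first adjust $B$ by replacing it with $dB$ for a suitable homogeneous $d \in \cD$ so that the modified $\varphi_0$ restricts to the standard conjugation $\tau$ on $\De$ (identity on $\RR$, complex conjugation on $\CC$, quaternionic conjugation on $\HH$), and then, if needed, apply a second adjustment to force $\delta \in \{\pm 1\}$. A direct computation shows that the adjustment $B \mapsto dB$, $\varphi_0 \mapsto \Int(d)\varphi_0$ replaces $\delta$ by $\delta' = \varphi_0^{-1}(d)\,\delta\,d^{-1}$.

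For the first step I would argue case by case on $\De \in \{\RR,\CC,\HH\}$. If $\De = \RR$ there is nothing to do. If $\De \cong \CC$ and $\CC \subseteq Z(\cD)$ (equivalently $K = T$), then $Z(\cR)_e = Z(\cD)_e = \CC$, and the centrality hypothesis $\mathrm{Sym}(Z(\cR)_e,\varphi) = \RR$ forces $\varphi|_{\CC}$ to be complex conjugation; unwinding \eqref{eq:GradAdj} shows that $\varphi_0|_{\CC} = \tau$ already. If $\De \cong \CC$ and $K \neq T$, centrality is automatic, but for any $t \in T \setminus K$ the element $X_t$ satisfies $\Int(X_t)|_{\CC} = \tau$ by definition of $K$, so $d = X_t$ toggles $\varphi_0|_{\CC}$ between the identity and $\tau$. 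Finally, if $\De \cong \HH$, then, since every automorphism of $\HH$ is inner, every $\RR$-linear antiautomorphism of $\HH$ has the form $\Int(b) \circ \tau$ for some $b \in \HH^\times$; writing $\varphi_0|_{\HH} = \Int(b)\,\tau$, I would take $d = b^{-1} \in \De \subseteq \cD_e$ to obtain $(\Int(b^{-1})\varphi_0)|_{\HH} = \tau$.

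After this first step, $(\varphi_0|_{\De})^2 = \mathrm{id}_{\De}$, which combined with $\varphi_0^2 = \Int(\delta^{-1})$ and $\delta \in \De$ gives $\delta \in Z(\De)$. The defining relations $\overline B(v,w) = \varphi_0^{-1}(B(w,v))$ and $\overline B = \delta B$ imply $B(w,v) = \varphi_0(B(v,w))\,\varphi_0(\delta)$; swapping $v$ and $w$, substituting back, and using $\varphi_0^2(x) = \delta^{-1}x\delta$ collapses this to $B(v,w) = B(v,w)\,\delta\,\varphi_0(\delta)$ for all $v,w$, whence $\delta\,\varphi_0(\delta) = 1$ by nondegeneracy of $B$. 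For $\De \in \{\RR,\HH\}$ we have $\delta \in \RR$ and $\tau|_{\RR} = \mathrm{id}$, so $\delta^2 = 1$ and $\delta = \pm 1$. For $\De \cong \CC$ we get $\delta\bar\delta = 1$, and I pick a square root $d \in \CC$ of $\delta$ (automatically $|d|=1$); since $d$ is central in $\De$, the adjustment $B \mapsto dB$ leaves $\varphi_0|_{\De} = \tau$ unchanged, and $\delta' = \bar d \cdot d^2 \cdot d^{-1} = 1$.

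The main obstacle is the first step, in particular the $\HH$ case, which requires the explicit description of the antiautomorphisms of $\HH$ together with the observation that $b^{-1}$ lives in $\cD_e$ so the adjustment is degree-preserving, as well as the coexistence of two distinct mechanisms in the two $\CC$ sub-cases (centrality of $(\cR,\varphi)$ on one hand, and the existence of a homogeneous element of $\cD$ inverting the action on $\De$ on the other). Once $\varphi_0|_{\De} = \tau$ is arranged, the remainder is essentially a one-line consequence of the scalar identity $\delta\,\varphi_0(\delta) = 1$ together with the algebraic closedness of $\CC$.
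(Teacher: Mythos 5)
Your proof is correct and follows essentially the same route as the paper's: the transformation rule $\delta'=\delta\varphi_0(d)d^{-1}$ (equivalently your $\varphi_0^{-1}(d)\delta d^{-1}$), the identity $\delta\varphi_0(\delta)=1$, and the same case-by-case normalizations of $\varphi_0\vert_{\De}$ (trivially for $\RR$, via centrality when $\CC\cong\De\subseteq Z(\cD)$, via $X_t$ with $t\in T\setminus K$ otherwise, and via Skolem--Noether for $\HH$), followed by the same rescaling of $B$ by a unit-modulus complex number in the $\CC$ case. The only difference is cosmetic: you normalize $\varphi_0\vert_{\De}$ uniformly first and then deduce $\delta\in Z(\De)$ in one stroke, where the paper interleaves the two steps.
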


\begin{proof}
If we replace $B$ by $dB$ and $\varphi_0$ by $ \mathrm{Int}(d) \varphi_0 $,
then $\delta$ is replaced by:
\begin{equation}\label{eq:delta}
\delta' = \delta \varphi_0(d) d^{-1}
\end{equation}
Indeed, we have $ \overline{dB}(v,w) =
\varphi_0^{-1} ( \mathrm{Int}(d^{-1}) (dB(w,v)) ) =
\varphi_0^{-1}(d) \varphi_0^{-1}(B(w,v)) =
\mathrm{Int}(\delta)(\varphi_0(d)) (\delta B(v,w)) =
(\delta \varphi_0(d) d^{-1}) dB(v,w) $.
Applying Equation \eqref{eq:delta} to $ \delta B = \overline{B} $, we get:
$ B = \overline{\overline{B}} =
\delta \varphi_0(\delta) \delta^{-1} \overline{B} =
\delta \varphi_0(\delta) B $,
so $ \delta \varphi_0 (\delta) = 1 $.
Consider the possibilities for $\De$:
\begin{itemize}
\item $ \De = \mathbb{R} $.
We know that $\varphi_0$ is the identity on $\De$,
so $ \delta^2 = 1 $ and $ \delta \in \{ \pm 1 \} $.

\item $ \De \cong \mathbb{H} $.
Replacing $B$ by a suitable $qB$ with $ q \in \De $,
we may assume that $\varphi_0$ restricts to the standard involution (conjugation) on $\De$.
Since $ \delta \in \De $,
$\varphi_0$ is involutive on $C_{\mathcal{D}}(\De)$,
and hence $ \varphi_0^2 = \mathrm{id}_{\mathcal{D}} $.
It follows that $ \delta \in Z(\mathcal{D}) \cap \De = \mathbb{R} $,
and it has to be $+1$ or $-1$.
\item $ \De \cong \mathbb{C} $.
If $\De$ is central in $\mathcal{D}$,
then $\varphi_0$ must be nontrivial on $\De$ by assumption,
because the restrictions of $\varphi$ and $\varphi_0$
to $ Z(\mathcal{R}) = Z(\mathcal{D}) $ coincide by Equation \eqref{eq:GradAdj}.
If $\De$ is not central and $ \varphi_0 \vert_{\De} $ is trivial,
pick a nonzero homogeneous $ d \in \mathcal{D} $ such that
$ \mathrm{Int}(d) \vert_{\De} $ is nontrivial and replace $B$ with $dB$.
Thus, we may assume that $\varphi_0$ is the complex conjugation on $\De$.
So $ 1 = \delta \varphi_0 (\delta) = \vert \delta \vert^2 $;
pick $ z \in \De $ such that $ z \overline{z}^{-1} = \delta $,
then replacing $B$ with $zB$ as in Equation \eqref{eq:delta} yields $ \delta' = 1 $.
\end{itemize}
\end{proof}

\begin{remark}\label{rem:delta}
If $ \De \cong \mathbb{C} $ then 
we can make $\delta$ equal to $+1$ or $-1$ as we wish,
since multiplying $B$ by an imaginary unit $ \mathbf{i} \in \De $ changes $\delta$ to $-\delta$.
Of course, we must simultaneously adjust $\varphi_0$ unless $ \De \subseteq Z(\cD) $.
In this latter case, we will always make the choice $ \delta = 1 $.
\end{remark}

Equation \eqref{eq:delta} suggests the following equivalence relation:

\begin{definition}\label{def:equiv_of_involutions}
Let $\varphi_0$ and $\varphi_0'$ be degree-preserving involutions on a graded division algebra $\cD$.
We will write $\varphi_0\sim\varphi_0'$ if $\varphi_0'=\Int(d)\varphi_0$ for some nonzero homogeneous $d\in\cD$ 
such that $\varphi_0(d)\in\{\pm d\}$.
\end{definition}

If $B$ is $\varphi_0$-sesquilinear and satisfies $\overline{B}\in\{\pm B\}$ then, 
for any $\varphi_0'\sim\varphi_0$, the form $B$ can be replaced by $B'$ that is 
$\varphi_0'$-sesquilinear and also satisfies $\overline{B'}\in\{\pm B'\}$. 

\begin{notation}\label{nota:scaling_Xt}
Fix $B$ as in Lemma \ref{lem:Delta}.
Recall that in the case $ \De \cong \mathbb{H} $,
we take $ X_t \in C_{\mathcal{D}}(\De) $.
Then, as well as in the case $ \De = \mathbb{R} $, we have
$ \varphi_0 (X_t) = \eta(t) X_t $, where $ \eta(t) \in \{ \pm 1 \} $
does not depend on the choice of $X_t$.
In the case $ \De \cong \mathbb{C} $, recall 
$ K := \{ t \in T \mid \mathrm{Int} (X_t) \vert_{\De} =
\mathrm{id}_{\De} \} $.
We have $ \varphi_0 (X_t) = X_t \eta(t) $, $ \eta(t) \in \De $,
and there are two possibilities:
\begin{itemize}
\item
If $ t \in T \setminus K $, then for all $ z \in \De $,
$ \varphi_0 ( X_t z ) = \overline{z} X_t \eta(t) = ( X_t z ) \eta(t) $.
Hence $ X_t = \varphi_0^2 (X_t) = \varphi_0 ( X_t \eta(t) ) = X_t \eta(t)^2 $.
So $ \eta(t) \in \{ \pm 1 \} $,
and $\eta(t)$ does not depend on the choice of $X_t$.
\item
If $ t \in K $, then for all $ z \in \De $, we have
$ \varphi_0 ( X_t z ) = \overline{z} X_t \eta(t) =
X_t \overline{z} \eta(t) = ( X_t z ) \eta(t) z^{-1} \overline{z} $.
Then we can replace $X_t$ by $ X_t z $ so that $ \eta(t) \in \mathbb{R} $
and, moreover, we can control the sign of $\eta(t)$.
In fact $ \eta(t) \in \{ \pm 1 \} $,
because $ X_t = \varphi_0^2 (X_t) = X_t \eta(t)^2 $.
\emph{We choose $X_t$ so that $ \eta(t) = \delta $ when $ t \in K $.}
\end{itemize}
Thus, in all cases, we have 
\[
\varphi_0(X_t)=\eta(t)X_t\quad\text{and}\quad\eta(t)\in\{\pm 1\}\quad\text{for all}\quad t\in T.
\]
\end{notation}

\begin{lemma}\label{lem:eta_is_good}
For all $s,t\in T$, we have $\eta(st^2)=\eta(s)$.
\end{lemma}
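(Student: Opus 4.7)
The approach is to compute $\varphi_0(X_t X_s X_t)$ in two different ways and compare the results. Since $\varphi_0$ is an antiautomorphism with $\varphi_0(X_r) = \eta(r) X_r$ and each $\eta(r)\in\{\pm 1\}$ is a real scalar (hence central in $\cD$), a direct expansion gives $\varphi_0(X_t X_s X_t) = \varphi_0(X_t)\varphi_0(X_s)\varphi_0(X_t) = \eta(s)\eta(t)^2 X_t X_s X_t = \eta(s)\, X_t X_s X_t$. On the other hand, since $G$ is abelian, $X_t X_s X_t$ lies in $\cD_{st^2}$, so we can write $X_t X_s X_t = X_{st^2}\mu$ for some $\mu\in\De^{\times}$, and applying $\varphi_0$ to this expression yields $\eta(st^2)\varphi_0(\mu)X_{st^2}$. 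Equating both expressions and cancelling $X_{st^2}$ on the right produces the key identity
\[
\eta(s)\,X_{st^2}\,\mu\,X_{st^2}^{-1} \;=\; \eta(st^2)\,\varphi_0(\mu).
\]
The conclusion will then follow by case analysis on $\De$, the task in each case being to recognize $X_{st^2}\mu X_{st^2}^{-1}$ as $\varphi_0(\mu)$ (or as $\mu$ itself when $\varphi_0(\mu)=\mu$), so that the factors $\eta(s)$ and $\eta(st^2)$ can be equated.

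If $\De=\RR$, then $\mu\in\RR$ is central in $\cD$ and fixed by $\varphi_0$, so the identity reduces to $\eta(s)\mu=\eta(st^2)\mu$ and we are done. If $\De\cong\HH$, the standing convention $X_r\in C_{\cD}(\De)$ makes $X_t X_s X_t$ and $X_{st^2}$ both lie in $C_{\cD}(\De)$, hence $\mu = X_{st^2}^{-1}X_t X_s X_t \in \De\cap C_{\cD}(\De) = Z(\De) = \RR$, and the same argument closes the case.

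The delicate case is $\De\cong\CC$, where $\varphi_0$ restricts to complex conjugation on $\De$. When $\De\subseteq Z(\cD)$ we have $K=T$, and by Notation \ref{nota:scaling_Xt} combined with Remark \ref{rem:delta} the function $\eta$ is identically $\delta$, so the claim is immediate. When $\De\not\subseteq Z(\cD)$, the subgroup $K$ has index $2$ in $T$, and since $t^2\in K$ for every $t\in T$, the elements $s$ and $st^2$ lie in the same coset modulo $K$. If $s\in K$, then $st^2\in K$ and both $\eta(s)$ and $\eta(st^2)$ equal $\delta$ by Notation \ref{nota:scaling_Xt}. The remaining subcase $s\notin K$ is where I expect the main obstacle: here $st^2\notin K$ as well, so $X_{st^2}$ acts on $\De$ by nontrivial (i.e.\ complex) conjugation, giving $X_{st^2}\mu X_{st^2}^{-1}=\overline{\mu}=\varphi_0(\mu)$, and the displayed identity collapses directly to $\eta(s)=\eta(st^2)$.
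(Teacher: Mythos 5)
Your proof is correct and follows essentially the same route as the paper: both arguments hinge on the single computation $\varphi_0(X_tX_sX_t)=\eta(s)\eta(t)^2X_tX_sX_t=\eta(s)X_tX_sX_t$ and then read off $\eta(st^2)$ from this element of $\cD_{st^2}$. The paper shortcuts your explicit bookkeeping of $\mu$ by invoking the fact (from Notation \ref{nota:scaling_Xt}) that $\eta(st^2)$ is independent of the choice of nonzero element in $\cD_{st^2}$ in the relevant cases, but the content is the same.
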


\begin{proof}
Consider the case $\De\cong\CC$. First, note that $s\in K$ if and only if $st^2\in K$. By our convention, if $s\in K$ then $\eta(st^2)=\delta=\eta(s)$. 
Suppose $s\notin K$. Then $\eta(st^2)$ does not depend on the choice of a nonzero element in the component $\cD_{st^2}$, 
so we may use the element $X_tX_sX_t$, which gives 
$\varphi_0(X_tX_sX_t)=\varphi_0(X_t)\varphi_0(X_s)\varphi_0(X_t)=\eta(s)\eta(t)^2 X_tX_sX_t=\eta(s)X_tX_sX_t$.

The same argument works in the cases $\De=\RR$ and $\De\cong\HH$.
\end{proof}

\begin{lemma}\label{lem:equiv_of_involutions}
$\varphi_0\sim\varphi_0'$ if and only if $\varphi_0'\varphi_0^{-1}$ is an inner automorphism.
\end{lemma}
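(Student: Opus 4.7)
The forward direction is immediate from the definition: if $\varphi_0' = \Int(d)\varphi_0$, then $\varphi_0'\varphi_0^{-1} = \Int(d)$ is inner, so I focus on the converse.

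Assume $\varphi_0'\varphi_0^{-1}$ is inner. As both involutions preserve degree, so does their composition, so by Lemma \ref{lem:InnAut} I may write $\varphi_0' = \Int(d_0)\varphi_0$ for some nonzero homogeneous $d_0 \in \cD$. The plan is to replace $d_0$ by $d := d_0 c$ for a suitably chosen homogeneous $c \in Z(\cD)^\times$: centrality of $c$ preserves $\Int(d) = \Int(d_0)$, so it remains only to arrange $\varphi_0(d) \in \{\pm d\}$.

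To identify the defect of $d_0$, I use that $\varphi_0$ is an antiautomorphism, which yields the identity $\varphi_0\,\Int(a)\,\varphi_0^{-1} = \Int(\varphi_0(a)^{-1})$; applied inside $(\varphi_0')^2$, this gives
\[
(\varphi_0')^2 \,=\, \Int(d_0)\,\varphi_0\,\Int(d_0)\,\varphi_0 \,=\, \Int\!\bigl(d_0\,\varphi_0(d_0)^{-1}\bigr).
\]
Since $(\varphi_0')^2 = \mathrm{id}$, the element $\zeta := \varphi_0(d_0)\,d_0^{-1}$ lies in $Z(\cD)$; as it is homogeneous of degree $e$, it belongs to $Z(\cD)_e = Z(\cD)\cap\De$. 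Applying $\varphi_0$ to $\varphi_0(d_0) = \zeta\,d_0$ (using centrality of $\zeta$) yields $\zeta\,\varphi_0(\zeta) = 1$. A short calculation using centrality of $c$ and $\zeta$ gives $\varphi_0(d)\,d^{-1} = \varphi_0(c)\,c^{-1}\,\zeta$, so my task reduces to finding homogeneous $c\in Z(\cD)^\times$ with $\varphi_0(c)\,c^{-1} = \pm\zeta^{-1}$.

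By Lemma \ref{lem:Delta}, $\varphi_0$ restricts to the standard conjugation on $\De$, and by Lemma \ref{lem:suprad}, $Z(\cD)_e = \RR$ except when $\De \cong \CC \subseteq Z(\cD)$. In the generic case $\zeta \in \RR$ satisfies $\zeta^2 = 1$, so $\zeta = \pm 1$ and $c = 1$ works. The only substantive case is $\De \cong \CC \subseteq Z(\cD)$: there $\zeta$ lies on the unit circle in $\CC$, say $\zeta = \exp(i\theta)$, and $c := \exp(i\theta/2) \in \CC = Z(\cD)_e$ satisfies $\varphi_0(c)/c = \exp(-i\theta) = \zeta^{-1}$, which has the required form. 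The one genuine obstacle is this central-$\CC$ case; fortunately the existence of a square root in $\CC$ always resolves it.
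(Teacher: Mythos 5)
Your proof is correct and follows essentially the same route as the paper's: apply Lemma \ref{lem:InnAut}, use that both maps are involutions to show $\varphi_0(d_0)d_0^{-1}$ is a degree-$e$ central element $\zeta$ with $\zeta\varphi_0(\zeta)=1$, conclude $\zeta=\pm1$ except when $\CC\cong\De\subseteq Z(\cD)$, and in that case rescale $d_0$ by a central $z\in\De$ (your explicit square root is exactly the paper's appeal to Notation \ref{nota:scaling_Xt}). Your write-up of the reduction via $(\varphi_0')^2=\Int(d_0\varphi_0(d_0)^{-1})$ is slightly more explicit but mathematically identical.
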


\begin{proof}
The ``only if'' part is clear. For the ``if'' part, use Lemma \ref{lem:InnAut} to find a nonzero homogeneous
$d\in\cD$ such that $\varphi_0'=\Int(d)\varphi_0$. Since $\varphi_0\Int(d)=\Int(\varphi_0(d)^{-1})\varphi_0$ and 
both $\varphi_0$ and $\varphi_0'$ are involutions, we obtain $\Int(d)\Int(\varphi_0(d)^{-1})=\mathrm{id}_\cD$,
that is, $\varphi_0(d)=\lambda d$ for some $\lambda\in Z(\cD)$. 
Since we also have $\lambda\in\De$, we conclude that $\lambda\in\RR$ 
except possibly in the case $ \mathbb{C} \cong \De \subseteq Z(\cD) $. In this latter case, we can replace 
$d$ by $dz$ as in Notation \ref{nota:scaling_Xt}, for a suitable $z\in\De$, so that $\lambda\in\RR$, and this 
change does not affect $\Int(d)$. Since $\varphi_0$ is an involution, it follows that $\lambda\in\{\pm 1\}$.
\end{proof}

\subsection{More properties of the graded-division algebra}

The existence of the involution $\varphi_0$ as in Lemma \ref{lem:Delta}
imposes restrictions on the graded division algebra $\cD$.

\begin{lemma}\label{lem:normbeta}
The alternating bicharacter $\beta$ takes values in the unit circle.
Thus, in view of Lemma \ref{lem:suprad}, $\beta$ takes values in $ \{ \pm 1 \} $ except possibly in the case
$ \mathbb{C} \cong \De \subseteq Z(\cD) $.
\end{lemma}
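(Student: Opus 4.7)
The plan is to extract the claim by applying the involution $\varphi_0$ to the defining relation $X_sX_t=\beta(s,t)X_tX_s$ for $s,t\in K$, and exploiting that $\varphi_0(X_t)=\eta(t)X_t$ with $\eta(t)\in\{\pm 1\}$ by Notation \ref{nota:scaling_Xt}, together with the fact that $\varphi_0$ restricts to the conjugation on $\De$.

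More precisely, since $\varphi_0$ is an antiautomorphism, the left-hand side becomes
\[
\varphi_0(X_sX_t)=\varphi_0(X_t)\varphi_0(X_s)=\eta(t)\eta(s)X_tX_s,
\]
while the right-hand side becomes $\varphi_0(X_tX_s)\varphi_0(\beta(s,t))=\eta(s)\eta(t)X_sX_t\,\overline{\beta(s,t)}$. The signs $\eta(s)\eta(t)\in\{\pm 1\}$ cancel, yielding $X_tX_s=X_sX_t\,\overline{\beta(s,t)}$. Substituting $X_sX_t=\beta(s,t)X_tX_s$ on the right (after verifying that $\overline{\beta(s,t)}$ commutes with $X_sX_t$, which is immediate since $\beta(s,t)\in Z(\De)^\times$ and, by definition of $K$, both $X_s$ and $X_t$ centralize $\De$ whenever $s,t\in K$) gives $X_tX_s=\beta(s,t)\overline{\beta(s,t)}\,X_tX_s$, and hence $|\beta(s,t)|^2=1$.

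The second assertion of the lemma is then automatic: Lemma \ref{lem:suprad} already guarantees $\beta(s,t)\in\RR^\times$ outside the case $\CC\cong\De\subseteq Z(\cD)$, and a real number on the unit circle is $\pm 1$.

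I do not anticipate a real obstacle here, as the calculation is very short. The only point requiring minor care is the commutation of the scalar $\overline{\beta(s,t)}$ with $X_sX_t$ in the case $\De\cong\CC$ with $K\ne T$; this is handled by observing that $\beta$ is only defined on $K\times K$ and that $X_s,X_t$ act trivially on $\De$ for $s,t\in K$.
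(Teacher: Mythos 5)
Your proof is correct and follows essentially the same route as the paper: apply $\varphi_0$ to $X_sX_t=\beta(s,t)X_tX_s$, use $\varphi_0(X_t)=\eta(t)X_t$ with $\eta(t)\in\{\pm 1\}$ so the sign factors cancel, and conclude $\beta(s,t)\overline{\beta(s,t)}=1$. The paper's proof is just a one-line version of the same computation, and your extra care about the commutation of $\overline{\beta(s,t)}$ with $X_sX_t$ for $s,t\in K$ is a harmless (and valid) elaboration.
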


\begin{proof}
Applying $\varphi_0$ to $ X_s X_t = \beta(s,t) X_t X_s $,
we get $ X_t X_s = \overline{\beta(s,t)} X_s X_t $,
so $ \vert \beta(s,t) \vert^2 = \beta(s,t) \overline{\beta(s,t)} = 1 $.
\end{proof}

\begin{lemma}\label{lem:Xt2}
If $ \mathbb{C} \cong \De \nsubseteq Z(\cD) $,
then $X_t^2$ is central for all $ t \in T \setminus K $.
\end{lemma}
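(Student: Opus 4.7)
My plan is to consider the inner automorphism $\alpha := \Int(X_t^2)$ of $\cD$ and show that $\alpha = \mathrm{id}_{\cD}$, which is equivalent to $X_t^2 \in Z(\cD)$. Since $t \notin K$, the restriction $\Int(X_t)|_\De$ is complex conjugation on $\De \cong \CC$, hence $\alpha|_\De = (\overline{\phantom{z}})^2 = \mathrm{id}_\De$; as $\alpha$ also preserves degree (it is conjugation by a homogeneous element), Lemma \ref{lem:cocycle} yields a $1$-cocycle $\nu : T \to \CC^\times$ such that $\alpha(X_s) = X_s \nu(s)$ for every $s \in T$. The goal thus reduces to proving $\nu \equiv 1$.

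The value $\nu(t) = 1$ is immediate, since $X_t^2$ commutes with $X_t$.

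For $s \in K$, the key step uses the involution $\varphi_0$. Since $s\in K$ implies $X_s \in C_\cD(\De)$ and $\cD_s = X_s\De$, conjugation by $X_t$ yields a unique $\mu \in \CC^\times$ with $X_t X_s X_t^{-1} = \mu X_s$, equivalently $X_t X_s = \mu X_s X_t$. Apply $\varphi_0$ to this identity, and use $\varphi_0(X_r) = \eta(r) X_r$ with $\eta(r) \in \{\pm 1\}$ together with the twist $X_t z = \overline{z}\,X_t$ for $z \in \De$ (which holds because $t \notin K$) to push factors past one another. I expect the bookkeeping to collapse to $\mu^2 = 1$, hence $\mu \in \{\pm 1\} \subseteq \RR$. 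Substituting back, $\alpha(X_s) = X_t(\mu X_s) X_t^{-1} = \overline{\mu}\,\mu X_s = |\mu|^2 X_s = X_s$, so $\nu(s) = 1$ for every $s \in K$.

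Finally, any $t' \in T \setminus K$ can be written as $t' = t \cdot (t^{-1} t')$ with $t^{-1} t' \in K$, and the cocycle relation gives $\nu(t') = \nu(t)^{t^{-1} t'} \nu(t^{-1} t') = 1 \cdot 1 = 1$. Thus $\nu \equiv 1$, so $\alpha = \mathrm{id}_{\cD}$, and $X_t^2 \in Z(\cD)$. The main obstacle is the sesquilinear bookkeeping in the step for $s \in K$: the antihomomorphism property of $\varphi_0$, combined with the twist $\Int(X_t)|_\De = \overline{\phantom{z}}$, requires careful tracking of complex conjugations and of the $\eta$-signs as both sides of $X_t X_s = \mu X_s X_t$ are pushed through $\varphi_0$. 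Once the relation $\mu^2 = 1$ is isolated, the remainder is routine cocycle manipulation.
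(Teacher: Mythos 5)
Your proof is correct; the computation you defer (``I expect the bookkeeping to collapse to $\mu^2=1$'') does check out: applying $\varphi_0$ to $X_tX_s=\mu X_sX_t$ gives $\eta(s)\eta(t)X_sX_t$ on the left and $\eta(t)\eta(s)\,X_tX_s\bar\mu=\eta(s)\eta(t)\,\mu^2X_sX_t$ on the right (using $X_s\bar\mu=\bar\mu X_s$ and $X_t\bar\mu=\mu X_t$), whence $\mu^2=1$. Your route differs from the paper's in which element gets conjugated: the paper conjugates the degree-$t^2$ component by $X_s$ for $s\in K$, identifies the resulting scalar as $\beta(s,t^2)$ with $t^2\in K$, and concludes $t^2\in\rad\beta$ because that scalar is simultaneously in $\{\pm1\}$ (Lemma~\ref{lem:normbeta}) and positive (it is the cocycle norm $\nu(t)^t\nu(t)=|\nu(t)|^2$ from Lemma~\ref{lem:sses}); you instead conjugate $X_s$ by $X_t$ twice, which requires controlling the \emph{mixed} commutation scalar $\mu$ with $t\in T\setminus K$ and $s\in K$ --- a quantity the paper only analyzes later, in Lemma~\ref{lem:extended_beta}, by essentially the same $\varphi_0$-computation you sketch. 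What your version buys is self-containment in the $s\in K$ step (no appeal to Lemmas~\ref{lem:normbeta} and~\ref{lem:sses}, and the positivity is automatic since double conjugation produces $\bar\mu\mu=|\mu|^2$); what the paper's version buys is reuse of already-established facts and a slightly shorter endgame (your final cocycle manipulation for $t'\in T\setminus K$ can be replaced by the one-line observation that $X_t^2$ commutes with the generators $\De$, $X_s$ for $s\in K$, and $X_t$, which span $\cD$).
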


\begin{proof}
Let $ s \in K $, then
$ \beta(s,t^2)X_{t^2} = \mathrm{Int}(X_s)(X_{t^2}) = X_{t^2} \nu(t^2) $
as in Lemma \ref{lem:cocycle}.
Because of Lemma \ref{lem:normbeta},
$ \beta(s,t^2) \in \{ \pm 1 \} $.
But $ \nu(t^2) = \vert \nu(t) \vert^2 > 0 $,
therefore $ t^2 \in \mathrm{rad}(\beta) $.
So $X_t^2$ commutes with all $X_s$, $ s \in K $.
Of course, $X_t^2$ commutes with $X_t$,
and with $\De$, so it is in the center.
\end{proof}

\begin{lemma}\label{lem:comm1}
Every automorphism of the graded algebra $\cD$ commutes with $\varphi_0$
except possibly in the case $ \mathbb{C} \cong \De \subseteq Z(\cD) $.
\end{lemma}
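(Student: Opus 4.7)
The plan is a case analysis based on $\De\in\{\RR,\HH,\CC\}$, using the general criterion that for a homogeneous invertible $d\in\cD$, the inner automorphism $\Int(d)$ commutes with $\varphi_0$ if and only if $\varphi_0(d)d\in Z(\cD)$. Indeed, $\Int(d)\varphi_0=\varphi_0\Int(d)$ unfolds to $d\varphi_0(x)d^{-1}=\varphi_0(d)^{-1}\varphi_0(x)\varphi_0(d)$ for all $x\in\cD$, i.e., $\varphi_0(d)d$ centralizes $\cD$.

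First I would reduce to the case where $\psi$ acts trivially on $\De$. If $\De=\RR$ this is automatic. If $\De\cong\HH$, the restriction $\psi|_{\De}$ is inner (as $\HH$ is central simple), say $\psi|_{\De}=\Int(q)|_{\De}$ for some $q\in\De^\times$; writing $\psi=\Int(q)\,\psi_1$ we get $\psi_1|_{\De}=\mathrm{id}_{\De}$, and the criterion applies to $\Int(q)$ since $\varphi_0(q)q=\overline{q}\,q=|q|^2\in\RR\subseteq Z(\cD)$. If $\De\cong\CC$ with $\De\not\subseteq Z(\cD)$, then $K\neq T$; picking $t_0\in T\setminus K$, the automorphism $\Int(X_{t_0})|_{\De}$ is the complex conjugation, so when $\psi|_{\De}$ is nontrivial I write $\psi=\Int(X_{t_0})\,\psi_1$ with $\psi_1|_{\De}=\mathrm{id}_{\De}$, and the criterion now applies since $\varphi_0(X_{t_0})X_{t_0}=\eta(t_0)X_{t_0}^2\in Z(\cD)$ by Lemma~\ref{lem:Xt2}.

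It then remains to check that any $\psi_1$ acting trivially on $\De$ commutes with $\varphi_0$. By Lemma~\ref{lem:cocycle}, $\psi_1(X_t)=X_t\,\nu(t)$ for a $1$-cocycle $\nu:T\to Z(\De)^\times$, and by Notation~\ref{nota:scaling_Xt} we have $\varphi_0(X_t)=\eta(t)X_t$ with $\eta(t)\in\{\pm 1\}$. A direct calculation yields $\psi_1\varphi_0(X_t)=\eta(t)X_t\nu(t)$ and $\varphi_0\psi_1(X_t)=\varphi_0(\nu(t))\eta(t)X_t$, so the desired equality reduces to $X_t\nu(t)=\varphi_0(\nu(t))X_t$. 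When $\De=\RR$ or $\De\cong\HH$, $\nu(t)\in\RR$ and $\varphi_0|_{\RR}=\mathrm{id}$, so both sides are $\nu(t)X_t$. When $\De\cong\CC\not\subseteq Z(\cD)$: for $t\in T\setminus K$, $\Int(X_t)$ acts on $\De$ as complex conjugation, so $X_t\nu(t)=\overline{\nu(t)}X_t=\varphi_0(\nu(t))X_t$ is automatic; for $t\in K$, Lemma~\ref{lem:sses} forces $\nu(t)\in\RR^\times$, so $\overline{\nu(t)}=\nu(t)$.

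I do not anticipate a significant obstacle: each piece of $\psi$ is handled by material already in place in Lemmas~\ref{lem:InnAut}--\ref{lem:Xt2}. The exceptional scenario $\mathbb{C}\cong\De\subseteq Z(\cD)$ is genuinely needed, because then $K=T$ and Lemma~\ref{lem:sses} no longer restricts $\nu(t)$ to real values, so the final step of the verification really can fail.
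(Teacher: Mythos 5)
Your proof is correct and follows essentially the same route as the paper's: reduce to $\psi|_{\De}=\mathrm{id}_{\De}$ by splitting off $\Int(q)$ (for $\HH$) or $\Int(X_{t_0})$ with $t_0\in T\setminus K$ (for $\CC$), observe that $\varphi_0(d)d$ is central in each case (via Lemma~\ref{lem:Xt2} for $\CC$), and then verify commutation on the $X_t$ using Lemmas~\ref{lem:cocycle} and~\ref{lem:sses}. Your explicit criterion ``$\Int(d)$ commutes with $\varphi_0$ iff $\varphi_0(d)d\in Z(\cD)$'' is only a minor repackaging of the paper's implicit argument.
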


\begin{proof}
Recall that $ \varphi_0 (X_t) = \eta(t) X_t $,
where $\eta$ takes values in $ \{ \pm 1 \} $.
Let $\psi_0$ be an automorphism of the graded algebra $\cD$.
First, if $ \psi_0 \vert_{\De} $ is nontrivial,
which can happen only if $ \De \cong \CC $ or $\HH$,
we may compose $\psi_0$ with $\Int(d)$ for a suitable nonzero homogeneous $d$
so that $ \Int(d) \psi_0 \vert_{\De} = \mathrm{id}_{\De} $,
namely, $ d \in \De $ in the case $ \De \cong \HH $,
and $ d= X_t $, $ t \in T \setminus K $, in the case $ \De \cong \CC $.
Now, $ \varphi_0(d) d $ is central:
$ \varphi_0(d) d = \vert d \vert^2 $ in the case $ \De \cong \HH $,
and $ \varphi_0(d) d = \pm X_t^2 $ in the case $ \De \cong \CC $,
so we can use Lemma \ref{lem:Xt2}.
Hence $\varphi_0$ commutes with $\Int(d)$.

Therefore, we may assume that $ \psi_0 \vert_{\De} = \mathrm{id}_{\De} $,
and write $\psi_0$ as in Lemma \ref{lem:cocycle},
$ \psi_0(X_t) = X_t \nu(t) $.
As $\eta$ takes values in $ \{ \pm 1 \} $,
and also $ \nu(t) \in \RR^{\times} $ for $ t \in K $
(see Lemma \ref{lem:sses}),
it is clear that $ \varphi_0 \psi_0 = \psi_0 \varphi_0 $
in the cases $ \De = \mathbb{R} $ and
$ \De \cong \mathbb{H} $ (where $K=T$).
In the case $ \De \cong \mathbb{C} $,
for the same reasons,
the restrictions of $\varphi_0$ and $\psi_0$ to
$ \cD_K := \bigoplus_{ s \in K } \cD_s $ commute with each other.
Finally, consider $ t \in T \setminus K $ in the case $ \De \cong \mathbb{C} $,
then $ \psi_0 \varphi_0 (X_t) = X_t \nu(t) \eta(t) $,
and $ \varphi_0 \psi_0 (X_t) = \varphi_0 ( X_t \nu(t) ) =
\overline{\nu(t)} X_t \eta(t) = X_t \nu(t) \eta(t) $.
\end{proof}

\begin{lemma}\label{lem:comm2}
Suppose that $ \mathbb{C} \cong \De \subseteq Z(\cD) $, that is, $\cD$ can be given a complex structure (in two ways) 
that makes it a graded $\mathbb{C}$-algebra. Then the automorphisms of the graded algebra $\cD$ are as follows:
\begin{itemize}
\item
The $\CC$-linear automorphisms $\psi_0$ (that is, those whose restriction to $\De$ is the identity) 
are given by $ \psi_0(X_t) = X_t \nu(t) $, where $ \nu \in \mathrm{Hom}(T,\mathbb{C}^{\times}) $.
\item
The $\CC$-antilinear automorphisms $\psi_0$ (that is, those whose restriction to $\De$ is the conjugation)
exist only if $\beta$ takes values in $ \{ \pm 1 \} $. They are given by $ \psi_0(X_t) = X_t \nu(t) $,
where the map $ \nu : T \to \mathbb{C}^{\times} $ satisfies
$ \nu(st) = \nu(s) \nu(t) \beta(s,t) $ for all $ s,t \in T $.
\end{itemize}
In both cases, $\psi_0$ commutes with $\varphi_0$ if and only if $\nu$ takes values in $\RR^\times$.
\end{lemma}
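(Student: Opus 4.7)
The plan is to exploit the careful normalization of the $X_t$ from Notation \ref{nota:scaling_Xt}. Since $\CC\cong\De\subseteq Z(\cD)$ implies $K=T$ and Remark \ref{rem:delta} fixes $\delta=1$ in this case, we obtain $\eta(t)=1$, so $\varphi_0(X_t)=X_t$ for every $t\in T$. Introducing the factor set $\sigma(s,t)\in\De^\times$ by $X_sX_t=\sigma(s,t)X_{st}$, the defining relation of $\beta$ reads $\beta(s,t)=\sigma(s,t)/\sigma(t,s)$. Applying $\varphi_0$ to $X_sX_t=\sigma(s,t)X_{st}$---using that $\varphi_0$ is an antiautomorphism, fixes each $X_t$, and restricts to the complex conjugation on $\De$ by Lemma \ref{lem:Delta}---yields the crucial symmetry $\sigma(t,s)=\overline{\sigma(s,t)}$, so that $\beta(s,t)=\sigma(s,t)/\overline{\sigma(s,t)}$.

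Next, I would split into cases according to $\psi_0|_\De$. Since the only $\RR$-algebra automorphisms of $\CC$ are the identity ($\CC$-linear case) and the conjugation ($\CC$-antilinear case), and since degree preservation forces $\psi_0(X_t)=X_t\nu(t)$ for some $\nu(t)\in\CC^\times$, this exhausts all possibilities. In the $\CC$-linear case, plugging into $\psi_0(X_sX_t)=\psi_0(X_s)\psi_0(X_t)$ and cancelling the common factor $\sigma(s,t)X_{st}$ yields $\nu(st)=\nu(s)\nu(t)$ immediately.

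For the $\CC$-antilinear case I would first apply $\psi_0$ to the bicharacter relation $X_sX_t=\beta(s,t)X_tX_s$: antilinearity puts $\overline{\beta(s,t)}$ on the right, while the images $\psi_0(X_s),\psi_0(X_t)$ are themselves nonzero homogeneous of degrees $s,t$ and therefore obey the bicharacter with coefficient $\beta(s,t)$, forcing $\overline{\beta(s,t)}=\beta(s,t)$ and hence $\beta(s,t)\in\{\pm 1\}$ by Lemma \ref{lem:normbeta}. Redoing the homomorphism computation with antilinearity acting on $\sigma(s,t)\in\CC$ produces $\overline{\sigma(s,t)}\nu(st)=\sigma(s,t)\nu(s)\nu(t)$, and substituting the identity from paragraph one yields exactly $\nu(st)=\nu(s)\nu(t)\beta(s,t)$. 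Conversely, any such $\nu$ defines an antilinear graded automorphism by the rule $zX_t\mapsto X_t\overline{z}\nu(t)$, as is straightforward to verify.

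For the commutation criterion, both cases admit the same one-line computation: $\psi_0\varphi_0(X_t)=\psi_0(X_t)=X_t\nu(t)$, while $\varphi_0\psi_0(X_t)=\varphi_0(X_t\nu(t))=\overline{\nu(t)}X_t=X_t\overline{\nu(t)}$, using centrality of $\De$ and that $\varphi_0|_\De$ is the conjugation. Hence $\varphi_0\psi_0=\psi_0\varphi_0$ if and only if $\nu(t)=\overline{\nu(t)}$ for all $t$, i.e., $\nu$ takes values in $\RR^\times$. The main subtlety I anticipate is coordinating the normalizations from Notation \ref{nota:scaling_Xt}, Lemma \ref{lem:Delta}, and Remark \ref{rem:delta} to justify both $\varphi_0(X_t)=X_t$ and the resulting symmetry $\sigma(t,s)=\overline{\sigma(s,t)}$; once those are in hand, the rest of the argument reduces to direct computation.
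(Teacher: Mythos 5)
Your proof is correct and follows essentially the same route as the paper's: both rest on the normalization $\varphi_0(X_t)=X_t$ (coming from Notation \ref{nota:scaling_Xt} and the choice $\delta=1$ in Remark \ref{rem:delta}) together with a direct computation on the generators $X_t$, and the commutation criterion is handled identically. The only organizational difference is in the antilinear case, where the paper factors $\psi_0=\psi_0'\varphi_0$ through a $\CC$-linear antiautomorphism $\psi_0'$ and gets $\beta^2=1$ from the order reversal, whereas you work with $\psi_0$ directly via the factor set $\sigma(s,t)$ and obtain $\beta=\overline{\beta}$ from antilinearity combined with $\vert\beta\vert=1$ from Lemma \ref{lem:normbeta}; both yield the same condition $\nu(st)=\nu(s)\nu(t)\beta(s,t)$.
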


\begin{proof}
If $\psi_0$ is $\CC$-linear then
we already know from Lemma \ref{lem:cocycle} that it has the form $ \psi_0(X_t) = X_t \nu(t) $,
where $ \nu \in Z^1 (T, \De^{\times} ) = \mathrm{Hom} (T, \mathbb{C}^{\times} ) $.

Assume that $\psi_0$ is $\CC$-antilinear. Then $ \psi_0 = \psi'_0 \varphi_0 $,
where $\psi'_0$ is a $\mathbb{C}$-linear antiautomorphism of the graded algebra $\cD$.
Clearly, $\psi'_0$ is given by $ \psi'_0 (X_t) = X_t \nu(t) $, 
where $ \nu(t) \in \De^{\times} = \CC^{\times} $.
Applying $\psi'_0$ to Equation \eqref{def:beta}, we get
$ \psi'_0(X_t) \psi'_0(X_s) = \beta(s,t) \psi'_0(X_s) \psi'_0(X_t) =
\beta(s,t)^2 \psi'_0(X_t) \psi'_0(X_s) $,
so $\beta$ takes values in $ \{ \pm 1 \} $.
Recall that $\varphi_0$ restricts to the conjugation on $\De$ and 
$ \varphi_0(X_t) = X_t $ for all $ t \in T $ (since $ T = K $ and we may assume 
$ \delta = 1 $ by Remark \ref{rem:delta}).
Hence, we have $\psi_0(X_t)=X_t \nu(t)$.

Now let $\psi'_0$ be a $\CC$-linear map given by $ \psi'_0 (X_t) = X_t \nu(t) $, where $ \nu(t) \in \CC^{\times} $. 
Then, on the one hand, $\psi_0'(X_s X_t)=X_s X_t \nu(st)$ and, on the other hand, 
$\psi'_0(X_t) \psi'_0(X_s) = \beta(s,t) X_s X_t \nu(s)\nu(t)$ by Equation \eqref{def:beta}.
Therefore, $\psi'_0$ is an antiautomorphism of $\cD$ if and only if
$\nu(st)=\nu(s)\nu(t)\beta(s,t)$ for all $ s,t \in T $. 

Finally, whether $\psi_0$ is $\CC$-linear or $\CC$-antilinear, 
we have $\psi_0\varphi_0(X_t) = X_t\nu(t)$ and $\varphi_0\psi_0(X_t)=\overline{\nu(t)}X_t$.
Therefore, $ \varphi_0 \psi_0 = \psi_0 \varphi_0 $
if and only if $ \nu(t) = \overline{\nu(t)} $ for all $ t \in T $.
\end{proof}

\subsection{The structure theorem}

Let $ g_0 \in G $ be the degree of $B$.
Define $ (G/T)_{g_0} := \{ gT \mid g_0 g^2 \in T \} $
and $ \tau : (G/T)_{g_0} \rightarrow T $ by $ \tau(gT) = g_0 g^2 $
where $g$ is in the fixed transversal for $T$ in $G$.
In other words, $\tau(x)=g_0\xi(x)^2$ whenever $g_0\xi(x)^2\in T$.

\begin{remark}\label{rem:tau}
If $T$ is an elementary $2$-group,
then the definition of $\tau$ does not depend on the transversal.
\end{remark}

Recall that the isomorphism class of $\cV$ is determined by a function
$ \kappa : G/T \rightarrow \mathbb{Z}_{ \geq 0 } $
(a multiset) with a finite support $\{x_1,\ldots,x_s\}$.
We will now see that the existence of $B$ imposes restrictions on $\kappa$,
and also determines another function,
$ \TrSig : G/T \rightarrow \mathbb{Z} $.

\begin{definition}\label{def:MultFunct}
We will say that a map $ \kappa : G/T \rightarrow \mathbb{Z}_{ \geq 0 } $
is an \emph{admissible multiplicity function} if it satisfies the following conditions:
\begin{enumerate}
\item[(a)] the support of $\kappa$ is finite;
\item[(b)] $ \kappa ( g_0^{-1} x^{-1} ) = \kappa(x) $ for all $ x \in G/T $;
\item[(c)] for $ \De = \mathbb{R} $:\\
$ \kappa(x) \equiv 0 \pmod 2 $ if $ x \in (G/T)_{g_0} $
and $ \eta(\tau(x)) = -\delta $;
\item[(c$'$)] for $ \De \cong \mathbb{C} $:\\
$ \kappa(x) \equiv 0 \pmod 2 $ if $ x \in (G/T)_{g_0} $,
$ \tau(x) \in T \setminus K $
and $ \eta(\tau(x)) = -\delta $.
\end{enumerate}
The set of all admissible multiplicity functions will be denoted by
$\MulSet(G,\cD,\varphi_0,g_0,\delta)$.
(Thanks to Lemma \ref{lem:eta_is_good}, it does not depend on our choice of transversal.)
As before, we will write $k_i=\kappa(x_i)$ and $ \vert \kappa \vert := \sum_{ x \in G/T } \kappa(x) = k_1+\cdots+k_s$.
\end{definition}

\begin{definition}\label{def:SignFunct}
For a given admissible multiplicity function $\kappa$,
we will say that a map $ \TrSig : G/T \rightarrow \mathbb{Z} $
is a \emph{signature function} if it satisfies the following conditions:
\begin{enumerate}
\item[(i)] $ \vert \TrSig(x) \vert \leq \kappa(x) $ for all $ x \in G/T $;
\item[(ii)] for $ \De = \mathbb{R} $ and $ \De \cong \mathbb{H} $:\\
$ \TrSig (x) = 0 $ unless $ x \in (G/T)_{g_0} $ and
$ \eta(\tau(x)) = \delta $;\\
$ \TrSig(x) \equiv \kappa(x) \pmod 2 $ if $ x \in (G/T)_{g_0} $ and
$ \eta(\tau(x)) = \delta $;
\item[(ii$'$)] for $ \De \cong \mathbb{C} $:\\
$ \TrSig (x) = 0 $ unless $ x \in (G/T)_{g_0} $ and
$ \tau(x) \in K $;\\
$ \TrSig(x) \equiv \kappa(x) \pmod 2 $ if $ x \in (G/T)_{g_0} $ and
$ \tau(x) \in K $.
\end{enumerate}
The set of all signature functions for a given $\kappa$ will be denoted by
$ \SigSet ( G , \allowbreak \cD , \allowbreak \varphi_0 ,
\allowbreak g_0 , \allowbreak \kappa , \allowbreak \delta ) $.
(It does not depend on our choice of transversal.) 
\end{definition}

Recall the isotypic components $\cV_i=\cV_{g_i}\otimes_{\De} \cD$, where $g_i=\xi(x_i)$, 
and observe that $ B ( \mathcal{V}_i , \mathcal{V}_j ) = 0 $ unless $ g_0 g_i g_j \in T $.
Hence $B$ pairs $\mathcal{V}_i$ with itself if $ x_i = g_i T \in (G/T)_{g_0} $
and with $\mathcal{V}_j$, $ j \neq i $, otherwise.
This shows that $\kappa$ satisfies Property (b) of Definition \ref{def:MultFunct}.
For $i$ such that $ x_i \in (G/T)_{g_0} $,
we get a nondegenerate homogeneous $\varphi_0$-ses\-qui\-lin\-ear form
$ B \vert_{ \mathcal{V}_i \times \mathcal{V}_i } $
whose values on $ \mathcal{V}_{g_i} \times \mathcal{V}_{g_i} $
lie on $\mathcal{D}_{t_i}$, where $ t_i := \tau(x_i) = g_0g_i^2$.
Define $ B_i : \mathcal{V}_{g_i} \times \mathcal{V}_{g_i} \rightarrow \De $
by
\begin{equation}\label{eq:Bi}
B(v,w) = X_{t_i} B_i(v,w) \text{ for all } v,w \in \mathcal{V}_{g_i} \text{.}
\end{equation}
Then $B_i$ is a nondegenerate form on the $\De$-vector space $\cV_{g_i}$ and it is
$ \mathrm{Int} (X_{t_i}^{-1}) \varphi_0 \vert_{\De} $-sesquilinear.
Accordingly, we consider
$ \overline{B_i} (v,w) = \varphi_0 ( X_{t_i} B_i(w,v) X_{t_i}^{-1} ) $
for all $ v,w \in \mathcal{V}_{g_i} $.
Using Equation \eqref{eq:delta}, we get
$ \overline{B_i} = \delta \varphi_0 (X_{t_i}^{-1}) X_{t_i} B_i = \delta \eta(t_i) B_i $.
Depending on the type of $\De$ we get the following:
\begin{itemize}
\item If $ \De = \mathbb{R} $, then $ B_i(w,v) = \delta \eta(t_i) B_i(v,w) $,
so $B_i$ is either symmetric or skew-symmetric.
\item If $ \De \cong \mathbb{H} $,
then $ \varphi_0 (B_i(w,v)) = \delta \eta(t_i) B_i(v,w) $,
so $B_i$ is either hermitian or skew-hermitian.
\item If $ \De \cong \mathbb{C} $ and $ t_i \in T \setminus K $,
then $ B_i(w,v) = \delta \eta(t_i) B_i(v,w) $,
so $B_i$ is either symmetric or skew-symmetric.
\item If $ \De \cong \mathbb{C} $ and $ t_i \in K $,
then $ \varphi_0 (B_i(w,v)) = \delta \eta(t_i) B_i(v,w) = B_i(v,w) $,
so $B_i$ is hermitian.
\end{itemize}
Since nondegenerate skew-symmetric forms
(over $\mathbb{R}$ or $\mathbb{C}$)
exist only in even dimension,
$\kappa$ satisfies conditions (c)-(c$'$) of Definition \ref{def:MultFunct}.
Also, the form $B_i$ is hermitian (over $\RR$, $\CC$ or $\HH$) and therefore has inertia $(p_i,q_i)$ in the following cases:
\begin{itemize}
\item for $ \De = \mathbb{R} $ or $ \De \cong \mathbb{H} $,
when $ \eta(t_i) = \delta $;
\item for $ \De \cong \mathbb{C} $, when $ t_i \in K $.
\end{itemize}
So we can define a map $ \TrSig : G/T \rightarrow \mathbb{Z} $
as $ \TrSig (x_i) = p_i - q_i $ if $B_i$ has inertia,
and $ \TrSig (x) = 0 $ for all other $ x \in G/T $.
Since $ p_i + q_i = k_i = \kappa(x_i) $,
this map $\TrSig$ satisfies the conditions of Definition \ref{def:SignFunct},
that is, it is a signature function.

Recall that, if we choose a homogeneous $\mathcal{D}$-basis of $\mathcal{V}$,
then the elements of $ \mathcal{R} = \mathrm{End}_{\mathcal{D}} (\mathcal{V}) $
can be identified with matrices in $M_k(\mathcal{D})$, where 
$ k = \vert \kappa \vert$.
Also, Equation \eqref{eq:GradAdj} becomes
\begin{equation}\label{eq:MatrInv}
\varphi(X) = \Phi^{-1} \varphi_0(X^T) \Phi
\end{equation}
for all $ X \in M_k(\mathcal{D}) $,
where $\Phi$ is the matrix in $M_k(\mathcal{D})$
representing $B$ with respect to the chosen basis,
and $\varphi_0$ acts entrywise.

We can relabel $ ( g_1 , \ldots , g_s ) $ so that
the first $m$ entries satisfy $ g_0 g_i^2 \in T $ and
\[ g_{m+1} g_{m+2} \equiv \ldots \equiv g_{m+2r-1} g_{m+2r} \equiv g_0^{-1} \pmod{T} \]
where $ m + 2r = s $.
Write, as in Equation \eqref{eq:Bi}, $ B(v,w) = X_{t_{m+j}} B_{m+j}(v,w) $,
for all $ v \in \mathcal{V}_{g_{m+2j-1}} $ and $ w \in \mathcal{V}_{g_{m+2j}} $,
where $ t_{m+j} := g_0 g_{m+2j-1} g_{m+2j} $.
We can repeat the same argument to get
$ \overline{B_{m+j}} = \delta \eta(t_{m+j}) B_{m+j} $.
Choosing $\De$-bases in $ \mathcal{V}_{g_1} , \ldots , \mathcal{V}_{g_m} $
(which are then $\mathcal{D}$-bases in $ \mathcal{V}_1 , \ldots , \mathcal{V}_m $)
to bring $B_i$ to canonical form,
and choosing $\De$-bases in $\mathcal{V}_{g_{m+2j-1}}$ and $\mathcal{V}_{g_{m+2j}}$
that are dual with respect to $B_{m+j}$, we obtain:
\begin{equation}\label{eq:Phi}
\Phi = X_{t_1} S_1 \oplus \ldots \oplus X_{t_m} S_m
\oplus X_{t_{m+1}} S_{m+1} \oplus \ldots \oplus X_{t_{m+r}} S_{m+r}
\end{equation}
where the matrices $S_i$ are as follows.
For $ i = m+j $, $ j = 1 , \ldots , r $, we have $ k_{m+2j-1} = k_{m+2j} $ and
\[ S_i = \begin{pmatrix}
0 & I_{k_{m+2j}} \\
\delta \eta(t_{m+j}) I_{k_{m+2j}} & 0
\end{pmatrix}, \]
whereas for $ i = 1 , \ldots , m $ we have the following cases:
\begin{itemize}
\item For $ \De = \mathbb{R} $ or $ \De \cong \mathbb{H} $
with $ \eta(t_i) = \delta $,
and for $ \De \cong \mathbb{C} $ with $ t_i \in K $:
$ S_i = I_{p_i,q_i} $ (the diagonal matrix with $\pm 1$ on the main diagonal: the first $p_i$ entries are $1$ 
and the remaining $q_i$ entries are $-1$).
\item For $ \De = \mathbb{R} $ with $ \eta(t_i) = -\delta $,
and for $ \De \cong \mathbb{C} $ with $ t_i \in T \setminus K $ and $ \eta(t_i) = -\delta $:
\[ S_i = \begin{pmatrix} 0 & I_{k_{i/2}} \\ -I_{k_{i/2}} & 0 \end{pmatrix}. \]
\item For $ \De \cong \mathbb{C} $ with $ t_i \in T \setminus K $
and $ \eta(t_i) = \delta $: $ S_i = I_{k_i} $.
\item For $ \De \cong \mathbb{H} $ with $ \eta(t_i) = -\delta $:
$ S_i = \mathbf{i} I_{k_i} $  ($\mathbf{i}$ is a fixed imaginary unit in $\De$).
\end{itemize}

To summarize:

\begin{theorem}\label{th:struct}
Let $\mathcal{R}$ be a $G$-graded real associative algebra
that is graded-simple and satisfies the descending chain condition on graded left ideals,
and such that the identity component $\cR_e$ has finite dimension.
Let $\varphi$ be an involution on the graded algebra $\mathcal{R}$
such that $\mathcal{R}$ is central as a graded algebra with involution.
Then there exists a real graded division algebra $\cD$ with $\De$ isomorphic to $\RR$, $\CC$ or $\HH$
and a degree-preserving involution $\varphi_0$ on $\cD$ restricting to the conjugation on $\De$
such that, as a graded algebra with involution, $\cR$ is isomorphic to $M_k(\cD)$,
where the grading is given by Equation \eqref{eq:MatrGrad},
and the involution is given by Equations \eqref{eq:MatrInv} and \eqref{eq:Phi}.
\qed
\end{theorem}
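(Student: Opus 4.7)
The plan is to assemble the theorem as a summary of all the constructions developed in this section, most of which have been worked out explicitly in the paragraphs immediately preceding the statement. First, I invoke the graded Wedderburn–Artin theorem \cite[Theorem 2.6]{EK13} to get an isomorphism of graded algebras $\cR\cong\End_\cD(\cV)$ for some graded division algebra $\cD$ and a graded right $\cD$-module $\cV$ of finite dimension over $\cD$. The finiteness assumption on $\cR_e$ forces $\De$ to be finite-dimensional over $\RR$, so by Frobenius $\De\in\{\RR,\CC,\HH\}$. Decomposing $\cV$ into isotypic components $\cV_1,\ldots,\cV_s$ and choosing a homogeneous $\cD$-basis of $\cV$ compatible with this decomposition identifies $\cR$ with $M_k(\cD)$ for $k=|\kappa|$, with the grading taking precisely the form of Equation \eqref{eq:MatrGrad}.

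Next I translate the involution $\varphi$ using \cite[Theorem 2.57]{EK13}, which gives a pair $(\varphi_0,B)$ consisting of a degree-preserving antiautomorphism of $\cD$ and a nondegenerate homogeneous $\varphi_0$-sesquilinear form on $\cV$, related to $\varphi$ by Equation \eqref{eq:GradAdj}. By Lemma \ref{lem:Delta}, after rescaling $B$ (and correspondingly adjusting $\varphi_0$ by an inner automorphism) I may assume $\delta\in\{\pm 1\}$, $\varphi_0^2=\mathrm{id}_\cD$, and $\varphi_0|_{\De}$ is conjugation, all of which is guaranteed by the centrality assumption on $(\cR,\varphi)$.

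The remaining work is to identify the matrix $\Phi$ of $B$ in the chosen basis, which will give Equations \eqref{eq:MatrInv} and \eqref{eq:Phi}. The key observation is that $B(\cV_i,\cV_j)=0$ unless $g_0 g_i g_j\in T$, so I relabel the isotypic components so that the first $m$ are self-paired ($x_i\in(G/T)_{g_0}$) and the remaining $2r$ pair up as $(m+2j-1,m+2j)$ with $g_{m+2j-1}g_{m+2j}\equiv g_0^{-1}\pmod T$. On each self-paired $\cV_i$, I restrict $B$ and extract the $\De$-valued form $B_i$ on $\cV_{g_i}$ via Equation \eqref{eq:Bi}; the identity $\overline{B_i}=\delta\eta(t_i)B_i$ then classifies $B_i$ as symmetric, skew-symmetric, hermitian, or skew-hermitian depending on $\De$, $\eta(t_i)$, and whether $t_i\in K$. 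For the paired blocks, I pick dual $\De$-bases of $\cV_{g_{m+2j-1}}$ and $\cV_{g_{m+2j}}$ with respect to $B_{m+j}$. For the self-paired blocks, I pick $\De$-bases bringing $B_i$ to canonical form: a signed identity $I_{p_i,q_i}$ when $B_i$ is hermitian in the classical sense, the standard symplectic block when $B_i$ is skew-symmetric, the identity when $B_i$ is complex symmetric, and $\mathbf{i}I_{k_i}$ when $B_i$ is skew-hermitian over $\HH$. Assembling these canonical blocks weighted by the $X_{t_i}$ factors produces exactly Equation \eqref{eq:Phi}, and Equation \eqref{eq:MatrInv} is immediate from Equation \eqref{eq:GradAdj} applied entrywise.

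The theorem is thus essentially a bookkeeping assembly of Lemma \ref{lem:Delta} and the subsequent case analysis of the forms $B_i$, so there is no single hard obstacle; the main care is in verifying that the parity/support restrictions encoded in Definitions \ref{def:MultFunct} and \ref{def:SignFunct} are automatic consequences of the existence of the nondegenerate $B$ (skew-symmetric forms exist only in even dimension, and hermitian forms have a well-defined inertia). Once that case analysis is tabulated, the four possible shapes of $S_i$ listed in Equation \eqref{eq:Phi} cover all possibilities, completing the proof.
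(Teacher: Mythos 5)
Your proposal is correct and follows essentially the same route as the paper, which likewise presents Theorem \ref{th:struct} as a summary of the preceding constructions: graded Wedderburn--Artin, the pair $(\varphi_0,B)$, Lemma \ref{lem:Delta}, the pairing of isotypic components, and the case-by-case reduction of the forms $B_i$ to canonical shape yielding Equation \eqref{eq:Phi}. No gaps.
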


\begin{remark}
It may be convenient to choose the transversal for $T$ in $G$ so that,
for all $ x' \neq x'' $ in $G/T$ satisfying $ g_0 x' x'' = T $,
we have $ g_0 \xi(x') \xi(x'') = e $.
Then, in Equation \eqref{eq:Phi},
each of the $ t_{m+1} , \ldots , t_{m+r} $ is just $e$.
However, this choice of transversal depends on $g_0$.
\end{remark}

Suppose we are given $(\cD,\varphi_0)$ as in Theorem \ref{th:struct} and $\delta\in\{\pm 1\}$. 
Fix a transversal for $T$ in $G$ and, for each $t\in T$, an element $X_t\in\cD_t$ 
as described in Notation~\ref{nota:scaling_Xt}. Then the data that determine $(\cR,\varphi)$ are 
$(g_0,\kappa,\TrSig)$.
Conversely, let $ g_0 \in G $,
$ \kappa \in \MulSet(G,\cD,\varphi_0,g_0,\delta) $ and
$ \TrSig \in \SigSet(G,\cD,\varphi_0,g_0,\kappa,\delta) $.
Then we know that, by means of Equation \eqref{eq:MatrGrad},
$\kappa$ makes $ \cR := M_k(\cD) $, with $ k =  \vert \kappa \vert  $,
a graded-simple algebra that satisfies
the descending chain condition on graded left ideals
and such that $\cR_e$ has finite dimension.
We can construct $ \Phi \in M_k(\cD) $ as in Equation \eqref{eq:Phi}, 
with $p_i=\frac12(\kappa(x_i)+\TrSig(x_i))$ and $q_i=\frac12(\kappa(x_i)-\TrSig(x_i))$ where appropriate,
and define $ \varphi : M_k(\cD) \rightarrow M_k(\cD) $ by Equation \eqref{eq:MatrInv},
which is equivalent to Equation \eqref{eq:GradAdj} for the $\varphi_0$-sesquilinear form $B$ represented by $\Phi$.
Since $ \varphi_0 ( \Phi^T ) = \delta \Phi $
(equivalently, $ \overline{B} = \delta B $),
$\varphi$ is an involution.
Since $B$ is homogeneous, $ \varphi (\cR_g) = \cR_g $.
Finally, $M_k(\cD)$ is central as a graded algebra with involution
because $ Z(\cR)_e = Z(\cD)_e $ and $\varphi_0$ restricts to the conjugation on $\De$.

\begin{definition}\label{def:Mdata}
We will denote the graded algebra with involution $(M_k(\cD),\varphi)$ constructed above by 
$M(\cD,\varphi_0,g_0,\kappa,\TrSig,\delta)$.
\end{definition}

\section{Classification of associative algebras with involution up~to~isomorphism}\label{se:assoc_classification}

Let $(\cR,\varphi)$ and $(\cR',\varphi')$ be graded algebras with involution as in Theorem \ref{th:struct},
which are isomorphic to $M(\cD,\varphi_0,g_0,\kappa,\TrSig,\delta)$
and $M(\cD',\varphi_0',g_0',\kappa',\TrSig',\delta')$, respectively, as in Definition \ref{def:Mdata}.
The purpose of this section is to determine conditions under which these two objects are isomorphic to each other. 

\subsection{Preliminary remarks}

Recall that, by \cite[Theorem 2.10]{EK13},
an isomorphism $ \psi : \cR \rightarrow \cR' $
between the $G$-graded algebras $ \cR = \End_{\cD}(\cV) $ and $ \cR' = \End_{\cD'}(\cV') $
(for now, without regard to involutions) corresponds to a pair $(\psi_0,\psi_1)$ where
$ \psi_0 : \cD \rightarrow \cD' $ is an isomorphism of $G$-graded algebras and 
$ \psi_1 : \cV^{[g]} \rightarrow \cV' $, for some $g\in G$,
is an isomorphism of $G$-graded spaces (in other words, an invertible linear map $\cV\to\cV'$ of degree $g$) 
such that $ \psi_1(vd) = \psi_1(v) \psi_0(d) $ for all $ v \in \cV $ and $ d \in \cD $. 
The correspondence is given by $ \psi_1(rv) = \psi(r) \psi_1(v) $
for all $ r \in \cR $ and $v\in\cV$. 
Moreover, $(\psi_0,\psi_1)$ is unique up to replacing
$\psi_0$ by $ \psi_0' = \Int(d^{-1}) \psi_0 $,
where $d$ is a nonzero homogeneous element in $\cD'$,
and simultaneously replacing $\psi_1$ by $\psi_1'$,
where $ \psi_1'(v) = \psi_1(v) d $ for all $ v \in \cV $.

Since it is necessary that $ \cD \cong \cD' $,
we will suppose that $ \cD = \cD' $ and thus $\psi_0$ is
an automorphism of the graded algebra $\cD$, while $\psi_1$ is $\psi_0$-semilinear as a map between $\cD$-modules.
We will denote the group of automorphisms of the graded algebra $\cD$ by $\mathrm{Aut}^G(\cD)$. 
Also, we will write $\Dgr^\times$ for the group of nonzero homogeneous elements of $\cD$.
Note that if $\psi_0$ is inner then, in view of Lemma \ref{lem:InnAut}, 
we have $\psi_0=\Int(d)$ for some $d\in\Dgr^\times$, so
we can adjust the pair $(\psi_0,\psi_1)$ so that $\psi_0=\mathrm{id}_\cD$ and still 
obtain the same isomorphism $\psi$.

In the case $ \CC \cong \De \subseteq Z(\cD) $ (in other words, $\De\cong\CC$ and $K=T$), we will restrict ourselves to 
the following situation: we will assume that $ \De = Z(\cD) $ and $T$ is finite. 
This simplifies the arguments and will be sufficient for our applications to classical Lie algebras 
(but see Subsection \ref{sse:groupoids} for the general situation).
Since here $\cD$ is a twisted group algebra of $T$, we see by a generalization of Maschke's Theorem 
(e.g. \cite[Corollary 10.2.5]{Kar}) that  
our assumption is tantamount to $\cD$ (equivalently, $\cR$) being a finite-dimensional central simple algebra over $\CC$
(which is graded as a $\CC$-algebra). 
We will refer to this as the \emph{$\CC$-central case}. 
Note that in this case the alternating bicharacter $\beta$ must be nondegenerate (see Lemma \ref{lem:suprad}).

The next step is to take into account the involutions. By \cite[Lemma 3.32]{EK13},
if we want $\psi$ to be an isomorphism of graded algebras with involution,
that is, $ \varphi' = \psi \varphi \psi^{-1} $,
it is necessary and sufficient that there exist $ d_0 \in \Dgr^\times $ such that
\begin{equation}\label{eq:relation_B} 
B'( \psi_1(v) , \psi_1(w) ) = \psi_0( d_0 B(v,w) ) 
\end{equation} 
for all $ v,w \in \cV $.
Automatically, $ \Int(d_0) \varphi_0 = \psi_0^{-1} \varphi_0' \psi_0 $.
Note that, since both $\varphi_0$ and $\varphi_0'$ restrict to the conjugation on $\De$, we have $d_0\in C_{\cD}(\De)$ 
and hence $\deg d_0\in K$.
Also, it follows from Equations \eqref{eq:delta} and \eqref{eq:relation_B} that
$ \delta' = \delta \varphi_0(d_0) d_0^{-1} $
and, in particular,  $\varphi_0(d_0)\in\{\pm d_0\}$.
Indeed, consider the form $B'':\cV\times\cV\to\cD$ given by $B''(v,w)=\psi_0^{-1}(B'(\psi_1(v),\psi_1(w)))$. 
One checks that $B''$ is $(\psi_0^{-1}\varphi_0'\psi_0)$-sesquilinear and satisfies $\overline{B''}=\delta'B''$.
Now Equation \eqref{eq:relation_B} reads $B''=d_0B$, so we may apply Equation \eqref{eq:delta} (with $d_0$ in place of $d$). 

If we are not in the $\CC$-central case, then $\psi_0$ commutes with $\varphi_0$ and $\varphi_0'$ by Lemma \ref{lem:comm1}, 
so the relation between $\varphi_0$ and $\varphi_0'$ simplifies: 
$\Int(d_0)\varphi_0=\varphi_0'$.
Moreover, adjusting the pair $(\psi_0,\psi_1)$,
we may assume without loss of generality that $ \psi_0 \vert_{\De} = \mathrm{id}_{\De} $
(see the proof of Lemma \ref{lem:comm1}).
Then, by Lemma \ref{lem:cocycle}, we have $ \psi_0(X_t) = X_t \nu(t) $,
where $ \nu \in Z^1 (T, Z(\De)^{\times} ) $.
We define for future reference:
\begin{equation}\label{def:groupA}
A := \{\psi_0\in\mathrm{Aut}^G(\cD) \mid \psi_0 \vert_{\De} = \mathrm{id}_{\De}\}
\cong Z^1 (T, Z(\De)^{\times} ).
\end{equation}

If we are in the $\CC$-central case, then $\psi_0$ does not always commute with the involutions --- see Lemma \ref{lem:comm2}.
However, if $\psi_0$ is $\CC$-linear then, 
by Skolem--Noether Theorem, it is inner and hence can be eliminated.
The existence of $\CC$-antilinear $\psi_0$ forces $\beta$ to take values in $\{\pm 1\}$. 
Since $\beta$ is nondegenerate, this implies that $T$ is an elementary $2$-group (compare with \cite[Lemma 2.50]{EK13}). 
Conversely, if $T$ is an elementary $2$-group then, by \cite[Proposition 2.51]{EK13}, there exists $\nu:T\to\{\pm 1\}$ satisfying 
$\nu(st)=\nu(s)\nu(t)\beta(s,t)$, so we obtain a $\CC$-antilinear automorphism $\psi_0$ sending $X_t\mapsto X_t\nu(t)$,
which generates the group $\mathrm{Aut}^G(\cD)$ modulo the inner automorphisms. Fix such $\nu$ and the corresponding $\psi_0$.
(Note that, if we regard $T$ as a vector space over the field of two elements, then $\nu$ is a quadratic form on $T$ 
with polar form $\beta$.) 
Since $\nu(t)\in\RR^\times$ for all $t\in T$, $\psi_0$ commutes with $\varphi_0$. 
It also commutes with $\Int(d)$ for all $d\in\Dgr^\times$, 
so in this case, too, the relation between $\varphi_0$ and $\varphi_0'$ simplifies: 
$\Int(d_0)\varphi_0=\varphi_0'$. We define for future reference:
\begin{equation}\label{def:groupA_}
A := \begin{cases}
\langle\psi_0\rangle\cong\ZZ_2 & \text{if $T$ is an elementary $2$-group;}\\
1 & \text{otherwise.}
\end{cases}
\end{equation}

We have seen that in all cases $ \Int(d_0) \varphi_0 = \varphi_0' $. 
Thus, for $(\cR,\varphi)$ and $(\cR',\varphi')$ to be isomorphic, 
it is necessary that $\varphi_0\sim\varphi_0'$ in the sense of Definition \ref{def:equiv_of_involutions}. 
Fixing a representative for each equivalence class, we may suppose that $\varphi_0 = \varphi_0'$,
so both $B$ and $B'$ are $\varphi_0$-sesquilinear forms, and they are related by  
Equation \eqref{eq:relation_B} where $d_0\in Z(\cD)$. 

There are two possibilities:
\begin{itemize}
\item If $ \varphi_0 \vert_{Z(\cD)} = \mathrm{id}_{Z(\cD)} $,
then necessarily $ \varphi_0(d_0) = d_0 $ and hence $ \delta' = \delta $
whenever $(\varphi_0,B)$ and $(\varphi_0,B')$
yield isomorphic $(\cR,\varphi)$ and $(\cR',\varphi')$.
Thus, $ \delta \in \{ \pm 1 \} $ is an invariant in this case,
and it suffices to consider $\varphi_0$-sesquilinear forms with a fixed $\delta$.
\item If $ \varphi_0 \vert_{Z(\cD)} \neq \mathrm{id}_{Z(\cD)} $,
then there exists a nonzero homogeneous $ d_0 \in Z(\cD) $
such that $ \varphi_0(d_0) = - d_0 $,
and hence we may adjust $(\varphi_0,B)$
to make $\delta=1$ or $\delta=-1$ as we wish.
\emph{We choose $ \delta = 1 $ and thus
consider only hermitian $\varphi_0$-sesquilinear forms over $\cD$ in this case.}
\end{itemize}
In either case, we will have $\varphi_0(d_0)=d_0$. We define for future reference:
\begin{equation}\label{def:groupC}
C := \{c\in \Dgr^\times \mid c\in Z(\cD)\text{ and }\varphi_0(c)=c\}.
\end{equation}

To summarize: we may suppose without loss of generality that
$ \cD = \cD' $, $ \varphi_0 = \varphi_0' $ and $ \delta = \delta' $.
Under this assumption, we will obtain conditions on 
$(g_0,\kappa,\TrSig)$ and $(g_0',\kappa',\TrSig')$ that are necessary and sufficient for 
$(\cR,\varphi)$ and $(\cR',\varphi')$ to be isomorphic.

\subsection{Extended signature functions}

Recall that the signature function $ \TrSig \in \SigSet(G,\cD,\varphi_0,g_0,\kappa,\delta) $ corresponding to 
a nondegenerate $\varphi_0$-sesquilinear form $B:\cV\times\cV\to\cD$, with $\deg B=g_0$ and $\overline{B}=\delta B$, 
assigns to each $x\in G/T$ the signature of the form 
$X_{\tau(x)}^{-1}B:\cV_{\xi(x)}\times\cV_{\xi(x)}\to\De$ if this signature is defined 
(that is, if $X_{\tau(x)}^{-1}B\vert_{\cV_{\xi(x)}\times\cV_{\xi(x)}}$ is a nondegenerate hermitian form over $\De$)
and zero otherwise. Here $\xi:G/T\to G$ is the section given by our fixed transversal and $\tau(x)=g_0\xi(x)^2$.
It is convenient to define $ \InSig : G \rightarrow \mathbb{Z} $ by 
\begin{equation}\label{eq:DefInSig}
\InSig(h) := 
\begin{cases}
\mathrm{signature}
( X_{ g_0 h^2 }^{-1} B \vert_{ \cV_h \times \cV_h } ) & 
\text{if $ g_0 h^2 \in K $ and $ \eta( g_0 h^2 ) = \delta $;}\\
0 & \text{otherwise.}
\end{cases}
\end{equation}
(Recall that if $ \De = \mathbb{R} $ or $ \De \cong \mathbb{H} $ then $K=T$, and if $ \De \cong \mathbb{C} $ then $\eta(t)=\delta$ for all $t\in K$.)
The relation between $ \TrSig : G/T \rightarrow \mathbb{Z} $
and $ \InSig : G \rightarrow \mathbb{Z} $
is simply $ \TrSig = \InSig \xi $.

\begin{proposition}\label{condition0}
For all $ h \in G $ such that $ t := g_0 h^2 \in K $ and for all $ u \in T $, 
\begin{equation}\label{eq:PropInSig}
\InSig(hu) = \eta(u) \mathrm{sign}
\big( X_{tu^2}^{-1} X_u X_t X_u \big) \InSig(h)
\end{equation}
\end{proposition}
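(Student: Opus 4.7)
The plan is to express the form $X_{tu^2}^{-1}B|_{\cV_{hu}\times\cV_{hu}}$ in terms of $X_t^{-1}B|_{\cV_h\times\cV_h}$ using multiplication by $X_u$, and then read off how the signature transforms.

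First I would note that since $T/K$ has order at most $2$ we have $u^2\in K$, hence $tu^2\in K$, and by Lemma~\ref{lem:eta_is_good} $\eta(tu^2)=\eta(t)$. Therefore $\InSig(h)$ and $\InSig(hu)$ are simultaneously defined by \eqref{eq:DefInSig}; if $\eta(t)\neq\delta$, both sides of \eqref{eq:PropInSig} vanish and the formula is trivially true. So I may assume $\eta(t)=\delta$.

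Next, observe that right multiplication by $X_u$ is a $\De$-semilinear bijection $\cV_h\to\cV_{hu}$: it is semilinear with respect to the automorphism $z\mapsto X_u^{-1}zX_u$ of $\De$ (which is the identity if $\De=\RR$ or $\De\cong\HH$, or if $\De\cong\CC$ and $u\in K$, and is complex conjugation if $\De\cong\CC$ and $u\in T\setminus K$). Using $\varphi_0(X_u)=\eta(u)X_u$ and Equation~\eqref{eq:GradAdj}, for $v,w\in\cV_h$ I compute
\[
B(vX_u,wX_u)=\varphi_0(X_u)B(v,w)X_u=\eta(u)X_uB(v,w)X_u.
\]
Writing $B(v,w)=X_t\tilde{B}(v,w)$ with $\tilde{B}(v,w)\in\De$ (as in Equation~\eqref{eq:Bi}) and setting $\lambda:=X_{tu^2}^{-1}X_uX_tX_u\in\De$, this gives
\[
X_{tu^2}^{-1}B(vX_u,wX_u)=\eta(u)\lambda\cdot X_u^{-1}\tilde{B}(v,w)X_u.
\]

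The key intermediate claim is that $\lambda\in\RR$, which is precisely what is needed for $\mathrm{sign}(\lambda)$ to make sense. I would prove this by applying $\varphi_0$: since $\eta(u)^2=1$ and $\varphi_0(X_u X_t X_u)=\eta(t)X_uX_tX_u$, while $\varphi_0(X_{tu^2}^{-1})=\eta(tu^2)X_{tu^2}^{-1}=\eta(t)X_{tu^2}^{-1}$ by Lemma~\ref{lem:eta_is_good}, one gets $\varphi_0(\lambda)=X_uX_tX_uX_{tu^2}^{-1}$, which equals $\lambda$ because $X_{tu^2}$ commutes with $\lambda\in\De$ (as $tu^2\in K$). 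Since $\varphi_0$ restricts to conjugation on $\De$, this shows $\lambda\in\RR$; for $\De=\RR$ or $\HH$ this is automatic since $\lambda\in\De\cap C_\cD(\De)$.

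Finally I would conclude by signature comparison. In the cases $\De=\RR$, $\De\cong\HH$, and $\De\cong\CC$ with $u\in K$, the map $v\mapsto vX_u$ carries $\tilde{B}|_{\cV_h\times\cV_h}$ to $\eta(u)\lambda\cdot\tilde{B}|_{\cV_h\times\cV_h}$ pulled back along a $\De$-linear isomorphism, so the signature is multiplied by $\mathrm{sign}(\eta(u)\lambda)=\eta(u)\,\mathrm{sign}(\lambda)$. In the remaining case $\De\cong\CC$ with $u\in T\setminus K$, the pullback is along a $\CC$-antilinear isomorphism, which replaces the hermitian form by its complex conjugate; since a hermitian matrix and its entrywise conjugate are congruent (they are transposes of one another) and thus have the same signature, the conclusion is again $\InSig(hu)=\eta(u)\,\mathrm{sign}(\lambda)\,\InSig(h)$. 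The main (minor) obstacle is the $\CC$-antilinear case, which requires the extra observation that complex conjugation does not alter signatures of hermitian forms.
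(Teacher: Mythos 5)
Your proposal is correct and follows essentially the same route as the paper's proof: compute $B(vX_u,wX_u)=\eta(u)X_uB(v,w)X_u$, factor out $\lambda=X_{tu^2}^{-1}X_uX_tX_u$, show $\lambda\in\RR$ by checking $\varphi_0$-invariance (using $\eta(tu^2)=\eta(t)$ and that $X_uX_tX_u$ and $X_{tu^2}$ lie in the same component with $tu^2\in K$), and compare signatures via the $\De$-linear, respectively $\CC$-antilinear, bijection $v\mapsto vX_u$. Your explicit remark that a hermitian form and its complex conjugate have the same signature just spells out what the paper leaves implicit in passing to $\overline{\cV_h}$, and citing Lemma~\ref{lem:eta_is_good} in place of the paper's direct check of $\eta(t)=\eta(tu^2)$ is a harmless shortcut.
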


\begin{proof}
First, observe that $ X_{tu^2}^{-1} X_u X_t X_u \in \RR^{\times} $,
so the sign above makes sense.
Indeed, this element belongs to $\De\cap C_{\cD}(\De)=Z(\De)$,
so the result is clear in the cases $ \De = \RR $ and $ \De \cong \HH $.
In the case $ \De \cong \CC $, it suffices to show that
$ \varphi_0 ( X_{tu^2}^{-1} X_u X_t X_u ) = X_{tu^2}^{-1} X_u X_t X_u $.
We compute:
\begin{align*}
\varphi_0 ( X_{tu^2}^{-1} X_u X_t X_u )
& = \varphi_0(X_u) \varphi_0(X_t) \varphi_0(X_u) \varphi_0(X_{tu^2})^{-1}\\
& = \eta(u) \eta(t) \eta(u) \eta(tu^2)^{-1} ( X_u X_t X_u ) X_{tu^2}^{-1}\\
& = X_{tu^2}^{-1} ( X_u X_t X_u ),
\end{align*}
where we have used that $\eta$ takes values $ \pm 1 $
and $ \eta(t) = \eta(tu^2) = \delta $
because $t$ and $tu^2$ are in $K$;
we have also used that $ X_u X_t X_u $ and $X_{tu^2}$ commute
because they are in the same component $\cD_{tu^2}$.

Now, we have to compare $\De$-valued forms
$ X_t^{-1} B \vert_{ \cV_h \times \cV_h } $ and
$ X_{tu^2}^{-1} B \vert_{ \cV_{hu} \times \cV_{hu} } $.
First of all, they either both have signature or both do not.
In the case $ \De \cong \CC $,
$t$ and $tu^2$ are in $K$.
In the cases $ \De = \RR $ and $ \De \cong \HH $,
we have to check that $ \eta(t) = \eta(tu^2) $.
Indeed, $ X_t X_u^2 = \lambda X_{tu^2} $ for some $ \lambda \in \RR $,
hence $ \varphi_0(X_u)^2 \varphi_0(X_t) = \lambda \varphi_0(X_{tu^2}) $,
so $ \eta(t) X_u^2 X_t = \lambda \eta(tu^2) X_{tu^2} =
\eta(tu^2) X_t X_u^2 = \eta(tu^2) X_u^2 X_t $,
where we have used the fact that $\beta$ takes values $ \pm 1 $
(Lemma \ref{lem:normbeta}).

Finally, suppose that the signatures are defined.
We compute, for all $ v,w \in \cV_h $:
\begin{align*}
X_{tu^2}^{-1} B(vX_u,wX_u)
& = X_{tu^2}^{-1} \varphi_0(X_u) B(v,w) X_u\\
& = \eta(u) X_{tu^2}^{-1} X_u X_t ( X_t^{-1} B(v,w) ) X_u;
\end{align*}
the last expression equals
$ \eta(u) X_{tu^2}^{-1} X_u X_t X_u \overline{( X_t^{-1} B(v,w) )} $
if $ \De \cong \CC $ and $ u \in T \setminus K $
and $ \eta(u) X_{tu^2}^{-1} X_u X_t X_u ( X_t^{-1} B(v,w) ) $
otherwise.
The result follows because the map $ v \mapsto vX_u $
is a $\De$-linear isomorphism from $\cV_h$ to $\cV_{hu}$,
respectively from $\overline{\cV_h}$ to $\cV_{hu}$
if $ \De \cong \CC $ and $ u \in T \setminus K $.
(Here we use the bar to denote the conjugate complex vector space: it has the same addition of vectors, 
but the scalar multiplication is twisted by complex conjugation.)
\end{proof}

\begin{definition}
For a given admissible multiplicity function $\kappa$,
we will say that a map $ \InSig : G \rightarrow \mathbb{Z} $
is an \emph{extended signature function} if it satisfies
Equation \eqref{eq:PropInSig} and the following conditions:
\begin{enumerate}
\item[(i)] $ \vert \InSig(h) \vert \leq \kappa(hT) $ for all $ h \in G $;
\item[(ii)] for $ \De = \mathbb{R} $ and $ \De \cong \mathbb{H} $:\\
$ \InSig (h) = 0 $ unless $ g_0 h^2 \in T $ and
$ \eta( g_0 h^2 ) = \delta $;\\
$ \InSig(h) \equiv \kappa(hT) \pmod 2 $ if $ g_0 h^2 \in T $ and
$ \eta( g_0 h^2 ) = \delta $;
\item[(ii$'$)] for $ \De \cong \mathbb{C} $:\\
$ \InSig (h) = 0 $ unless $ g_0 h^2 \in K $;\\
$ \InSig(h) \equiv \kappa(hT) \pmod 2 $ if $ g_0 h^2 \in K $.
\end{enumerate}
The set of all extended signature functions will be denoted by
$ \InSigSet ( G , \allowbreak \cD , \allowbreak \varphi_0 ,
\allowbreak g_0 , \allowbreak \kappa , \allowbreak \delta ) $.
\end{definition}

From Definition \ref{def:SignFunct} and Proposition \ref{condition0}, it is clear that $\InSig$ determined by $B$ 
via Equation \eqref{eq:DefInSig} is an extended signature function.
Conversely, any $ \InSig\in\InSigSet ( G , \allowbreak \cD , \allowbreak \varphi_0 ,
\allowbreak g_0 , \allowbreak \kappa , \allowbreak \delta ) $ is uniquely determined by the signature function 
$\TrSig:=\InSig\xi$, so $\InSig$ comes from some form $B$.

\subsection{The isomorphism theorem}

Let $\psi_0$ be an automorphism of $(\cD,\varphi_0)$ as a graded algebra with involution 
and let $ \psi_1 : \cV^{[g]} \rightarrow \cV' $ be a $\psi_0$-semilinear isomorphism of $G$-graded vector spaces.
The isomorphism $\psi_1$ becomes linear over $\cD$ if we regard it as a map from
$ ( \cV^{[g]} )^{ \psi_0^{-1} } $ to $\cV'$,
where the superscript $ \psi_0^{-1} $
refers to the twisted $\cD$-module structure:
for all $ v \in \cV $ and $ d \in \cD $,
$ v \cdot d := v \psi_0^{-1} (d) $.
Consequently, the map $ (v,w) \mapsto B'( \psi_1(v) , \psi_1(w) ) $,
which appears in the left-hand side of Equation \eqref{eq:relation_B},
is a $\varphi_0$-sesquilinear form on
$ ( \cV^{[g]} )^{ \psi_0^{-1} } $
that has the same parameters $(g_0',\kappa',\InSig')$
as the form $B'$ on $\cV'$.
Let us compute the parameters $(\underline{g_0},\underline{\kappa},\underline{\InSig})$ of the right-hand side,
\[ \underline{B} (v,w) := \psi_0( d_0 B(v,w) ) = \psi_0(d_0)\psi_0(B(v,w)), \]
regarded as a $\varphi_0$-sesquilinear form
on $ ( \cV^{[g]} )^{ \psi_0^{-1} } $,
in terms of the parameters $(g_0,\kappa,\InSig)$ of $B$ on $\cV$.

Since $ \cV_{hg}^{[g]} = \cV_h $ for all $ h \in G $, we have:
\begin{equation}\label{eq:Actg0}
\underline{g_0} = \deg \underline{B} = g^{-2} g_0 t_0
\end{equation}
where $ t_0 := \deg d_0 \in K$.
Next, for all $ h \in G $, we have 
$ \kappa(hT) = \dim_{\De} \cV_h $, so
\begin{equation}\label{eq:ActMult}
\underline{\kappa} (hT) =
\dim_{\De} \cV_{h}^{[g]} =
\dim_{\De} \cV_{g^{-1}h} =
\kappa (g^{-1}hT).
\end{equation}
Finally, we have to compute the signature of the $\De$-valued form
\[
X_{ \underline{g_0} h^2 }^{-1}
\underline{B} \vert_{ \cV_h^{[g]} \times \cV_h^{[g]} } =
X_{ t_0 t }^{-1} \psi_0(d_0) \psi_0( X_t )
 \psi_0 \big( X_t^{-1} B 
\vert_{ \cV_{g^{-1}h} \times \cV_{g^{-1}h} } \big) 
\]
where $ t := g^{-2} g_0 h^2 \in K $
(which is equivalent to $ t_0 t = \underline{g_0} h^2 $ being in $ K $).
Using the fact that the inertia of a $\De$-valued form is not affected by an automorphism of $\De$ (as an $\RR$-algebra), we get:
\begin{equation}\label{eq:ActInSig}
\underline{\InSig}(h) = \mathrm{sign}
\big( X_{ t_0 t }^{-1} \psi_0(d_0) \psi_0(X_t) \big)
\InSig( g^{-1} h ).
\end{equation}
Note that $X_{ t_0 t }^{-1} \psi_0(d_0) \psi_0(X_t) \in \RR^{\times} $,
so the sign above makes sense.
Indeed, as $d_0\in Z(\cD)$, we have $X_{ t_0 t }^{-1} \psi_0(d_0) \psi_0(X_t)\in\De\cap C_{\cD}(\De)=Z(\De)$, 
so the only case that needs attention is $ \De \cong \CC $.
Then, as the elements $ X_{ t_0 t } $, $\psi_0(d_0)$ and $\psi_0(X_t)$ commute with each other, and $\varphi_0\psi_0=\psi_0\varphi_0$, we have:
\begin{align*}
\varphi_0( X_{ t_0 t }^{-1} \psi_0(d_0) \psi_0(X_t) ) & =
\varphi_0( X_{ t_0 t }^{-1} ) \varphi_0( \psi_0(d_0) ) \varphi_0( \psi_0(X_t) ) =
\delta X_{ t_0 t }^{-1} \psi_0(d_0) \psi_0(\delta X_t)\\
& = X_{ t_0 t }^{-1} \psi_0(d_0) \psi_0(X_t),
\end{align*}
proving that $X_{ t_0 t }^{-1} \psi_0(d_0) \psi_0(X_t)\in\RR$.

We can express Equations \eqref{eq:Actg0}, \eqref{eq:ActMult} and \eqref{eq:ActInSig} in terms of group action.
Denote by $\mathrm{Aut}^G(\cD,\varphi_0)$ the group of automorphisms of $(\cD,\varphi_0)$ as a graded algebra with involution, 
and recall the subgroup $C\subseteq Z(\cD)^\times$ defined by Equation \eqref{def:groupC}. 
Then $\mathrm{Aut}^G(\cD,\varphi_0)$ acts naturally on $C$, 
so we can form their semidirect product $C\rtimes\mathrm{Aut}^G(\cD,\varphi_0)$.
Consider the collection of pairs $(\cV,B)$ where $\cV$ is a graded right $\cD$-module of finite dimension over $\cD$ 
and $B$ is a nondegenerate homogeneous $\varphi_0$-sesquilinear form on $\cV$ satisfying $\overline{B}=\delta B$.
The groups $\mathrm{Aut}^G(\cD,\varphi_0)$ and $C$ act on this collection as follows: for $\psi_0\in\mathrm{Aut}^G(\cD,\varphi_0)$, 
define $\psi_0\cdot(\cV,B) := (\cV^{\psi_0^{-1}},\psi_0 B)$ and, for $c\in C$, define $c\cdot(\cV,B) := (\cV,cB)$. 
Since $\psi_0(cB)=\psi_0(c)\psi_0 B$, this gives rise to an action of $C\rtimes\mathrm{Aut}^G(\cD,\varphi_0)$.  
Finally, $G$ acts by shift of grading: $g\cdot(\cV,B) := (\cV^{[g]},B)$, 
and this action commutes with that of $C\rtimes\mathrm{Aut}^G(\cD,\varphi_0)$. 
Then $((\cV^{[g]})^{\psi_0^{-1}},\underline{B})$ is obtained from $(\cV,B)$ by the action of
the element $(g,\psi_0(d_0),\psi_0)\in G\times(C\rtimes\mathrm{Aut}^G(\cD,\varphi_0))$.

Recall that if $d$ is a nonzero homogeneous element of $\cD$, $\psi_0' = \Int(d^{-1}) \psi_0 $ and 
$ \psi_1'(v) = \psi_1(v) d $ for all $ v \in \cV $, then $(\psi_0',\psi_1')$ and $(\psi_0,\psi_1)$ yield the same isomorphism 
$ \psi : \cR \rightarrow \cR' $.
In particular, it is sufficient to consider the subgroup $A\subseteq \mathrm{Aut}^G(\cD,\varphi_0)$
defined by Equation \eqref{def:groupA} or \eqref{def:groupA_}, depending on whether or not we are in the $\CC$-central case.

\begin{remark}\label{rem:Ain}
We can say more: let $\mathrm{Int}^G(\cD,\varphi_0)$ be the group of inner automorphisms of $(\cD,\varphi_0)$ as 
a graded algebra with involution, so, by Lemma \ref{lem:InnAut},  
$\mathrm{Int}^G(\cD,\varphi_0)=\{\Int(d) \mid d\in\Dgr^\times\text{ such that }\varphi_0(d) d \in Z(\cD)\}$. 
Then, $\mathrm{Int}^G(\cD,\varphi_0)$ acts trivially on $C$, and 
the $ G \times ( C \times \mathrm{Int}^G(\cD,\varphi_0) ) $-orbits in the set of isomorphism classes of pairs $(\cV,B)$ 
are the same as the $ G \times C $-orbits. 
\end{remark}

Equations \eqref{eq:Actg0}, \eqref{eq:ActMult} and \eqref{eq:ActInSig} show the effect of the above actions on the parameters 
$(g_0,\kappa,\InSig)\in\InParSet(G,\cD,\varphi_0,\delta)$ of $(\cV,B)$, where 
\[
\InParSet(G,\cD,\varphi_0,\delta) := \coprod_{g_0\in G}\coprod_{\kappa\in\MulSet(g_0)}\InSigSet(g_0,\kappa)\subseteq G\times\ZZ_{\ge 0}^{G/T}\times\ZZ^G,
\]
$ \MulSet(g_0) := \MulSet(G,\cD,\varphi_0,g_0,\delta) $ and 
$ \InSigSet(g_0,\kappa) := \InSigSet(G,\cD,\varphi_0,g_0,\kappa,\delta) $.
Recall that if we are not in the $\CC$-central case then $\psi_0\in A$ is given by $\psi_0(X_t d)=X_t \nu(t) d$ for all $t\in T$ and $d\in\De$, 
whereas if we are in the $\CC$-central case then either $\psi_0=\mathrm{id}_{\cD}$ or $\psi_0(X_t d)=X_t \nu(t) \bar{d}$ for all $t\in T$ and $d\in\De=\CC$, 
and also $C=\RR^\times$. 
Hence, $A$ acts on the group $C$ by $ \psi_0(c) = c\nu(\deg c) $ and 
on the set $ \InParSet(G,\cD,\varphi_0,\delta) $ through its action on each ``fiber'' $\InSigSet(g_0,\kappa)$:
\[
\psi_0 \cdot (g_0,\kappa,\InSig) = (g_0,\kappa,\underline{\InSig})
\text{ where }\underline{\InSig}(h)=\mathrm{sign}\big( \nu(g_0 h^2) \big) \InSig(h).
\]
(The factor $\mathrm{sign}( \nu(g_0 h^2) )$ is defined for $ g_0 h^2 \in K $, 
but $\InSig(h)=0$ for $ g_0 h^2 \notin K $, so the above formula makes sense.) 
Also, $C$ acts on the set $ \InParSet(G,\cD,\varphi_0,\delta) $ as follows:
\[
c \cdot (g_0,\kappa,\InSig) = (g_0 (\deg c), \kappa, \underline{\InSig})
\text{ where }
\underline{\InSig}(h) = \mathrm{sign} \big( X_{ g_0 h^2 (\deg c) }^{-1} X_{g_0 h^2} c \big) \InSig(h).
\]
(Again, the formula makes sense because $\InSig(h)=0$ for $ g_0 h^2 \notin K $.) 
Finally, $G$ acts on the set $ \InParSet(G,\cD,\varphi_0,\delta) $ through its componentwise action on $G\times\ZZ_{\ge 0}^{G/T}\times\ZZ^G$:
\[
g \cdot g_0 = g^{-2} g_0, \qquad
( g \cdot \kappa ) (hT) = \kappa (g^{-1}hT), \qquad
( g \cdot \InSig ) (h) = \InSig (g^{-1}h).
\]

The action of $G\times(C\rtimes A)$ can be reformulated in terms of the parameters $(g_0,\kappa,\TrSig) \in \ParSet(G,\cD,\varphi_0,\delta)$, 
where we use $ \SigSet(g_0,\kappa) := \SigSet ( G , \allowbreak \cD , \allowbreak \varphi_0 ,\allowbreak g_0 , \allowbreak \kappa , \allowbreak \delta ) $ 
instead of $\InSigSet(g_0,\kappa)$ to define the set $ \ParSet(G,\cD,\varphi_0,\delta) \subseteq G\times\ZZ_{\ge 0}^{G/T}\times\ZZ^{G/T}$. 
Then, for all $\psi_0\in A$, we have
\[
\psi_0 \cdot (g_0,\kappa,\TrSig) = (g_0,\kappa,\underline{\TrSig})
\text{ where }\underline{\TrSig}(x)=\mathrm{sign}\big( \nu(g_0 \xi(x)^2) \big) \TrSig(x),
\] 
and, similarly, for all $c\in C$, we have
\[
c \cdot (g_0,\kappa,\TrSig) = (g_0 (\deg c), \kappa, \underline{\TrSig})
\text{ where }
\underline{\TrSig}(x) = \mathrm{sign} \big( X_{ g_0 \xi(x)^2 (\deg c) }^{-1} X_{g_0 \xi(x)^2} c \big) \TrSig(x).
\]
Finally, Equation \eqref{eq:PropInSig} gives us a formula for the action of $g\in G$, which sends 
$\TrSig\in\SigSet(g_0,\kappa)$ to $\underline{\TrSig}\in\SigSet(g\cdot g_0, g\cdot\kappa)$:
\begin{align}
\underline{\TrSig}(x)
& = ( g \cdot \InSig )( \xi(x) )
= \InSig ( g^{-1} \xi(x) )
= \InSig ( \xi( g^{-1} x ) u )
\nonumber \\
& = \eta(u) \mathrm{sign} \big( X_{tu^2}^{-1} X_u X_t X_u \big)
\TrSig( g^{-1} x ) \label{eq:G-action}
\end{align}
where $x\in G/T$, $ t := g_0 \xi(g^{-1}x)^2 $ (which is in $K$ or else $\TrSig( g^{-1} x )=0$) 
and $ u := \xi( g^{-1} x )^{-1} g^{-1} \xi(x) $ (which is in $T$). 

\begin{theorem}\label{th:iso}
Let $M(\cD,\varphi_0,g_0,\kappa,\TrSig,\delta)$ and $M(\cD,\varphi_0,g_0',\kappa',\TrSig',\delta)$ be graded algebras 
with involution as in Definition \ref{def:Mdata}. If $\CC\cong\De\subseteq Z(\cD)$, assume that $T$ is finite and $\De=Z(\cD)$ 
(equivalently, $\beta$ is nondegenerate). Then the graded algebras with involution
are isomorphic if and only if $(g_0,\kappa,\TrSig)$ and $(g_0',\kappa',\TrSig')$
lie in the same $ G \times ( C \rtimes A ) $-orbit.
\end{theorem}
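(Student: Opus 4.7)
The plan is to verify both directions of the equivalence by assembling the preliminary discussion preceding Theorem \ref{th:iso}, which already exhibits the correspondence between isomorphisms of graded algebras with involution and triples $(\psi_0,\psi_1,d_0)$, and by then showing that the induced action on parameter triples is exactly the $G\times(C\rtimes A)$-action described via Equations \eqref{eq:Actg0}, \eqref{eq:ActMult}, and \eqref{eq:ActInSig}.

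For the ``only if'' direction, let $\psi:\cR\to\cR'$ be an isomorphism of graded algebras with involution. By \cite[Theorem 2.10]{EK13}, $\psi$ corresponds to $(\psi_0,\psi_1)$ with $\psi_0\in\mathrm{Aut}^G(\cD)$ and $\psi_1:\cV^{[g]}\to\cV'$ a $\psi_0$-semilinear isomorphism of graded spaces for some $g\in G$; Equation \eqref{eq:relation_B} produces $d_0\in C$ (using $\varphi_0=\varphi_0'$ and $\delta=\delta'$, together with $\deg d_0\in K$). Invoking Lemma \ref{lem:InnAut} and absorbing inner automorphisms of $\cD$ into $\psi_1$ (modifying $d_0$ accordingly) reduces to $\psi_0\in A$ as defined in \eqref{def:groupA} or \eqref{def:groupA_}. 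Viewing $\psi_1$ as a genuine $\cD$-linear map from the twisted module $(\cV^{[g]})^{\psi_0^{-1}}$ to $\cV'$, Equation \eqref{eq:relation_B} says that $(\cV',B')$ is isomorphic as a pair to $((\cV^{[g]})^{\psi_0^{-1}},\underline{B})$ where $\underline{B}(v,w):=\psi_0(d_0B(v,w))$. Reading the parameters of this latter pair off Equations \eqref{eq:Actg0}, \eqref{eq:ActMult}, and \eqref{eq:ActInSig} precisely exhibits $(g_0',\kappa',\TrSig')$ as the image of $(g_0,\kappa,\TrSig)$ under $(g,\psi_0(d_0),\psi_0)\in G\times(C\rtimes A)$.

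For the ``if'' direction, assume $(g_0',\kappa',\TrSig')=(g,c,\psi_0)\cdot(g_0,\kappa,\TrSig)$. Choose $d_0\in C$ with $\psi_0(d_0)=c$; such $d_0$ exists because $A$ acts on $C$ via the cocycle $\nu$, which is invertible on the relevant components. Set $\widetilde{\cV}:=(\cV^{[g]})^{\psi_0^{-1}}$ with form $\widetilde{B}(v,w):=\psi_0(d_0B(v,w))$; the same computations show that $(\widetilde{\cV},\widetilde{B})$ has parameters $(g_0',\kappa',\TrSig')$, identical to those of $(\cV',B')$. It remains to prove that two pairs $(\cV_1,B_1)$ and $(\cV_2,B_2)$ with the same parameter triple induce isomorphic graded algebras with involution; here we reuse the analysis preceding Theorem \ref{th:struct}. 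The common $\kappa'$ fixes the graded $\cD$-module structure up to isomorphism, so we may assume $\cV_1=\cV_2$ and compare the forms on each isotypic component: when $x_i\in(G/T)_{g_0'}$, the form reduces to a (skew-)hermitian form $B_{j,i}$ over $\De$, whose isomorphism class is determined by its inertia via Sylvester's law of inertia (hence by $\TrSig'(x_i)$); the hyperbolic pairings between distinct isotypic components interchanged by $B_j$ are unique up to isomorphism given their $\De$-dimensions, which agree by Property (b) of Definition \ref{def:MultFunct}. Choosing matching $\De$-bases brings both forms into the canonical shape \eqref{eq:Phi}, yielding the desired isomorphism.

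The main obstacle will be the $\CC$-central case, where $\psi_0$ may be $\CC$-antilinear and $\cV^{\psi_0^{-1}}$ carries the conjugate complex structure: one must verify that the scalars $X_{t_0t}^{-1}\psi_0(d_0)\psi_0(X_t)$ in \eqref{eq:ActInSig} and $X_{g_0h^2(\deg c)}^{-1}X_{g_0h^2}c$ in the $C$-action genuinely lie in $\RR^\times$ so the signs are meaningful, a check that rests on $\psi_0\varphi_0=\varphi_0\psi_0$ together with Notation \ref{nota:scaling_Xt}, and that the choice of $d_0$ realizing a given $c\in C$ is compatible with Lemma \ref{lem:comm2}, where $T$ is forced to be an elementary $2$-group and $\beta$ takes values in $\{\pm 1\}$. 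In the remaining cases, the analogous checks are routine because $\psi_0$ commutes with $\varphi_0$ by Lemma \ref{lem:comm1}.
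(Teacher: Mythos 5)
Your proposal is correct and follows essentially the same route as the paper: the ``only if'' direction is read off from the preliminary analysis reducing $\psi_0$ to $A$ and computing the induced action via Equations \eqref{eq:Actg0}--\eqref{eq:ActInSig}, and the ``if'' direction sets $d_0=\psi_0^{-1}(c)$ and matches the forms $\underline{B}=\psi_0(d_0B)$ and $B'$ through the common canonical matrix $\Phi$ of Equation \eqref{eq:Phi}. The only cosmetic difference is that you justify the existence of $d_0$ via the cocycle action of $A$ on $C$, whereas it suffices to note that $\psi_0$ preserves $C$, so $d_0=\psi_0^{-1}(c)$ works directly.
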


\begin{proof}
We have already proved that if $M(\cD,\varphi_0,g_0,\kappa,\TrSig,\delta)$ and $M(\cD,\varphi_0,g_0',\kappa',\TrSig',\delta)$ are isomorphic 
as graded algebras with involution then $(g_0,\kappa,\TrSig)$ and $(g_0',\kappa',\TrSig')$ lie in the same orbit.
Conversely, suppose that $(g_0',\kappa',\TrSig') = (g,c,\psi_0) \cdot (g_0,\kappa,\TrSig)$ and let $d_0=\psi_0^{-1}(c)$.
Then the $\varphi_0$-sesquilinear forms $\underline{B}=\psi_0(d_0 B)$ on $(\cV^{[g]})^{\psi_0^{-1}}$ and $B'$ on $\cV'$ are represented by 
the same matrix $\Phi$, as in Equation \eqref{eq:Phi}, with respect to appropriate homogeneous $\cD$-bases, whose respective elements have 
the same degrees. Therefore, there exists an isomorphism of graded $\cD$-modules $\psi_1:(\cV^{[g]})^{\psi_0^{-1}}\to\cV'$ such that 
$\underline{B}(v,w)=B'(\psi_1(v),\psi_1(w))$ for all $v,w\in\cV$. The result follows.
\end{proof}

\begin{remark}\label{rem:reduction_of_action}
If $K\ne T$ (that is, $\CC\cong\De\not\subseteq Z(\cD)$), 
then only the values $\nu(t)$ for $t\in K$ are relevant for the actions of $A$ 
on $C$ and on $\ParSet(G,\cD,\varphi_0,\delta)$, 
so we can replace $A\cong Z^1(T,\De^\times)$ by its quotient $\mathrm{Hom}^+(K,\RR^\times)$ 
(see Lemma \ref{lem:sses}). 
Further, only the sign of $\nu(t)$ matters, so we can replace $\mathrm{Hom}^+(K,\RR^\times)$ 
by its image under the homomorphism 
$\mathrm{Hom}(K,\RR^\times)\to\mathrm{Hom}(K,\{\pm 1\})$ induced by $\mathrm{sign}:\RR^\times\to\{\pm 1\}$, 
which is naturally isomorphic to 
$\mathrm{Hom}(K/T^{[2]},\{\pm 1\})$ where $T^{[2]} := \{t^2 \mid t\in T\}$. 
This latter reduction is also valid in the cases $\De=\RR$ and $\De\cong\HH$.
Therefore, except in the $\CC$-central case, the group $A$ can be replaced by 
$\bar{A} := \mathrm{Hom}(K/T^{[2]},\{\pm 1\})$.
Similarly, the group $C$ can always be replaced by $\bar{C} := C/\RR_{>0}$.
\end{remark}

\subsection{Interpretation in terms of groupoids}\label{sse:groupoids}

The classification we have just obtained can be expressed in the language of groupoids (that is, categories whose 
morphisms are invertible).
This interpretation will not be used in the remainder of the paper, but may elucidate what we have done so far.

For a given abelian group $G$, let $\mathfrak{R}$ be the groupoid whose objects are the pairs $(\cR,\varphi)$ as in Theorem \ref{th:struct} 
and whose morphisms are the isomorphisms of $G$-graded algebras with involution. Our classification problem is 
to parametrize the connected components of $\mathfrak{R}$. We begin by translating the problem to another groupoid,
$\mathfrak{M}$, defined as follows.

Let $\mathfrak{D}$ be the groupoid whose objects are the real graded division algebras with support in $G$ 
and the identity component isomorphic to $\RR$, $\CC$ or $\HH$ 
and whose morphisms are the isomorphisms of $G$-graded algebras. 
For a fixed object $\cD$ in $\mathfrak{D}$ and a degree-preserving involution $\varphi_0$ of $\cD$ that restricts to the conjugation on $\De$, 
let $\mathfrak{Q}(\cD,\varphi_0)$ be the groupoid whose objects are the pairs $(\cV,B)$, 
where $\cV$ is a graded right $\cD$-module of finite dimension over $\cD$ and 
$B:\cV\times\cV\to\cD$ is a nondegenerate homogeneous $\varphi_0$-sesquilinear form satisfying $\overline{B}\in\{\pm B\}$,
and whose morphisms $(\cV,B)\to(\cV',B')$ are the isomorphisms $\psi_1:\cV\to\cV'$ of graded $\cD$-modules such that 
$B=B'(\psi_1\times\psi_1)$.
Finally, let $\mathfrak{M}$ be the groupoid whose objects are the quadruples $(\cD,\varphi_0,\cV,B)$, 
where $\cD$ is an object in $\mathfrak{D}$, $\varphi_0$ is an involution of $\cD$ as above 
and $(\cV,B)$ is an object in $\mathfrak{Q}(\cD,\varphi_0)$, 
and whose morphisms $(\cD,\varphi_0,\cV,B)\to(\cD',\varphi_0',\cV',B')$ are the pairs 
$(\psi_0,\psi_1)$, where $\psi_0:\cD\to\cD'$ is a morphism in $\mathfrak{D}$ and there exist 
$g\in G$ and $d\in\Dgr^\times$ such that $\psi_1$ is a morphism 
$\big((\cV^{[g]})^{\psi_0^{-1}},\psi_0(dB)\big)\to(\cV',B')$ in $\mathfrak{Q}(\cD',\varphi_0')$, that is,
$\psi_1:(\cV^{[g]})^{\psi_0^{-1}}\to\cV'$ is an isomorphism of graded $\cD'$-modules and $\psi_0(dB)=B'(\psi_1\times\psi_1)$. 
Note that these conditions determine the elements $g$ and $d$ uniquely and imply that $\psi_0^{-1}\varphi_0'\psi_0=\Int(d)\varphi_0$. 

There is a functor $E:\mathfrak{M}\to\mathfrak{R}$ that maps 
$(\cD,\varphi_0,\cV,B)\mapsto(\mathrm{End}_\cD(\cV),\varphi)$ and $(\psi_0,\psi_1)\mapsto\psi$ 
where $\varphi$ is given by Equation \eqref{eq:GradAdj} and $\psi(r)=\psi_1 r\psi_1^{-1}$ for all $r\in\mathrm{End}_\cD(\cV)$.
The functor $E$ is full and essentially surjective (although not faithful, hence not an equivalence),
so it gives a bijection between the connected components of $\mathfrak{M}$ and those of $\mathfrak{R}$.

Next, we partition the groupoid $\mathfrak{M}$. Let $F_1:\mathfrak{M}\to\mathfrak{D}$ be the projection $(\cD,\varphi_0,\cV,B)\mapsto\cD$ and 
$(\psi_0,\psi_1)\mapsto\psi_0$. Then the object class of $\mathfrak{M}$ is partitioned into the inverse images of the connected components 
of $\mathfrak{D}$ under $F_1$. Moreover, if $\cD$ and $\cD'$ are in the same connected component of $\mathfrak{D}$, that is, there exists
a morphism $\psi_0:\cD\to\cD'$, and if $(\cD,\varphi_0,\cV,B)$ is an object in $\mathfrak{M}$, 
then $(\psi_0,\iota_\cV^{\psi_0^{-1}})$ is a morphism $(\cD,\varphi_0,\cV,B)\to (\cD',\psi_0\varphi_0\psi_0^{-1},\cV^{\psi_0^{-1}},\psi_0 B)$ in $\mathfrak{M}$,
where $\iota_\cV^{\psi_0^{-1}}$ is the identity map from $\cV$ to $\cV^{\psi_0^{-1}}$ (which have the same underlying set). 
It follows that every connected component of $\mathfrak{M}$ mapped by $F_1$ to the connected component $[\cD]$ in $\mathfrak{D}$ has a representative 
mapped to $\cD$. Therefore, we may fix $\cD$ and work in the ``fiber'' over $\cD$, that is, the full subgroupoid $\mathfrak{M}(\cD)\subseteq \mathfrak{M}$ 
defined by the object class $F_1^{-1}(\cD)$. Also, the group of automorphisms of $\cD$ acts on $F_1^{-1}(\cD)$, assuming the latter is nonempty: 
for $\psi_0\in\mathrm{Aut}^G(\cD)$, we let $\psi_0\cdot (\cD,\varphi_0,\cV,B) := (\cD,\psi_0\varphi_0\psi_0^{-1},\cV^{\psi_0^{-1}},\psi_0 B)$.

Further, we partition $\mathfrak{M}(\cD)$ as follows. The group $\Dgr^\times\rtimes\mathrm{Aut}^G(\cD)$ acts 
on the set of degree-preserving antiautomorphisms of $\cD$ by the formula $(c,\psi_0)\cdot\varphi_0 := \Int(c)\psi_0\varphi_0\psi_0^{-1}$,
so we get the corresponding action groupoid $\mathfrak{A}(\cD)$. Let $\mathfrak{I}(\cD)\subseteq\mathfrak{A}(\cD)$ 
be the full subgroupoid whose objects are the degree-preserving involutions of $\cD$ that restrict to the conjugation on $\De$. 
Then we can define another projection, $F_2:\mathfrak{M}(\cD)\to\mathfrak{I}(\cD)$, sending an object $(\cD,\varphi_0,\cV,B)$ to $\varphi_0$ and 
a morphism $(\psi_0,\psi_1):(\cD,\varphi_0,\cV,B)\to(\cD,\varphi_0',\cV',B')$ to the arrow $\varphi_0\to\varphi_0'$ labeled by the element 
$(\psi_0(d),\psi_0)\in\Dgr^\times\rtimes\mathrm{Aut}^G(\cD)$ where $d$ is determined by $(\psi_0,\psi_1)$ as above. 
Moreover, if $\varphi_0$ and $\varphi_0'$ are in the same connected component of $\mathfrak{I}(\cD)$, that is, 
there exist $c\in\Dgr^\times$ and $\psi_0\in\mathrm{Aut}^G(\cD)$ such that $(c,\psi_0)\cdot\varphi_0=\varphi_0'$ then 
we have $\psi_0^{-1}\varphi_0'\psi_0=\Int(d)\varphi_0$, where $d=\psi_0^{-1}(c)$, and 
the proof of Lemma \ref{lem:equiv_of_involutions} (with $\psi_0^{-1}\varphi_0'\psi_0$ playing the role of $\varphi_0'$) shows that $c$ 
can be chosen to satisfy $\varphi_0(d)\in\{\pm d\}$. 
Then $(\cV,dB)$ is an object of $\mathfrak{Q}(\cD,\Int(d)\varphi_0)$ and hence $(\psi_0,\iota_\cV^{\psi_0^{-1}})$ is a morphism 
$(\cD,\varphi_0,\cV,B)\to (\cD,\varphi_0',\cV^{\psi_0^{-1}},\psi_0 (dB))$ in $\mathfrak{M}(\cD)$.
Therefore, we may fix $\varphi_0$ and work in the ``fiber'' over $\varphi_0$, that is, the full subgroupoid 
$\mathfrak{M}(\cD,\varphi_0)\subseteq \mathfrak{M}(\cD)$ defined by the object class $F_2^{-1}(\varphi_0)$. 
Also, let $H$ be the subgroup of the stabilizer of $\varphi_0$ in $\Dgr^\times\rtimes\mathrm{Aut}^G(\cD)$ 
consisting of all $h=(\psi_0(d),\psi_0)$ such that $h\cdot\varphi_0=\varphi_0$ and $\varphi_0(d)\in\{\pm d\}$. 
Then, any morphism $(\psi_0,\psi_1)$ in $\mathfrak{M}(\cD,\varphi_0)$ is mapped by $F_2$ to an arrow $\varphi_0\to\varphi_0$ 
whose label is in $H$. Moreover, $H$ acts on $F_2^{-1}(\varphi_0)$, which is always nonempty, as follows: 
$h\cdot (\cD,\varphi_0,\cV,B) := (\cD,\varphi_0,\cV^{\psi_0^{-1}},\psi_0 (dB))$, and the corresponding action groupoid 
can be regarded as a subgroupoid of $\mathfrak{M}(\cD,\varphi_0)$: it has the same objects, 
and the arrow $(\cD,\varphi_0,\cV,B)\to(\cD,\varphi_0,\cV^{\psi_0^{-1}},\psi_0 (dB))$ labeled by $h$ can be identified 
with $(\psi_0,\iota_\cV^{\psi_0^{-1}})$.

Except in the case $\CC\cong\De\subseteq Z(\cD)$, all elements of $\mathrm{Aut}^G(\cD)$ commute with our involutions, 
so the situation simplifies: the connected components of $\mathfrak{I}(\cD)$ are given by the equivalence relation in 
Definition \ref{def:equiv_of_involutions} and the elements of $H$ have $d\in Z(\cD)$.
If $\varphi_0\vert_{Z(\cD)}=\mathrm{id}_{Z(\cD)}$ then there are no morphisms in $\mathfrak{M}(\cD,\varphi_0)$ between objects with 
$\overline{B}=+B$ and those with $\overline{B}=-B$, hence we partition $\mathfrak{M}(\cD,\varphi_0)$ into two full subgroupoids: 
$\mathfrak{M}(\cD,\varphi_0,\delta)$, $\delta\in\{\pm 1\}$, by the condition $\overline{B}=\delta B$.
If $\varphi_0\vert_{Z(\cD)}\ne\mathrm{id}_{Z(\cD)}$ then each connected component of $\mathfrak{M}(\cD,\varphi_0)$ has a representative 
in $\mathfrak{M}(\cD,\varphi_0,1)$, so it suffices to study this latter. In either case, the morphisms in $\mathfrak{M}(\cD,\varphi_0,\delta)$ are mapped 
to the subgroup $H^+\subseteq H$ defined by the condition $\varphi_0(d)=d$, that is,
\[
H^+ := \{ 
(\psi_0(d),\psi_0)\in\Dgr^\times\rtimes \mathrm{Aut}^G(\cD) \mid \varphi_0(d)=d
\text{ and } \psi_0\Int(d)\varphi_0\psi_0^{-1}=\varphi_0
\}.
\]
This subgroup acts on the objects of $\mathfrak{M}(\cD,\varphi_0,\delta)$, 
and the corresponding action groupoid can be regarded as a subgroupoid of $\mathfrak{M}(\cD,\varphi_0,\delta)$. 
In the case $\CC\cong\De\subseteq Z(\cD)$, each connected component of $\mathfrak{M}(\cD,\varphi_0)$ has a representative 
in $\mathfrak{M}(\cD,\varphi_0,1)$, so the same remarks about $H^+$ apply.
Note that, whenever all elements of $\mathrm{Aut}^G(\cD)$ commute with $\varphi_0$, 
we have $H^+=C\rtimes \mathrm{Aut}^G(\cD)$.

The lack of faithfulness of the functor $E$ can be exploited to replace the groupoid $\mathfrak{M}(\cD)$ 
by a subgroupoid with the same objects, but whose morphisms have some restrictions on $\psi_0$. 
This allowed us to replace the group $H^+$ by its subgroup $C\rtimes A$,
but we had to make a simplifying assumption in the case $\CC\cong\De\subseteq Z(\cD)$.
(Also, under the said assumption, the connected components of $\mathfrak{I}(\cD)$ are given by 
the equivalence relation in Definition \ref{def:equiv_of_involutions}.)
Now we proceed in full generality.

In addition to $H^+$, the group $G$ also acts on the objects of $\mathfrak{M}(\cD,\varphi_0,\delta)$: 
for all $g\in G$, let $g\cdot (\cD,\varphi_0,\cV,B) := (\cD,\varphi_0,\cV^{[g]},B)$, and the corresponding action groupoid 
can be regarded as a subgroupoid of $\mathfrak{M}(\cD,\varphi_0,\delta)$: it has the same objects, 
and the arrow $(\cD,\varphi_0,\cV,B)\to(\cD,\varphi_0,\cV^{[g]},B)$ labeled by $g$ can be identified with 
$(\mathrm{id}_\cD,\iota_\cV^{[g]})$, where $\iota_\cV^{[g]}$ is the identity map from $\cV$ to $\cV^{[g]}$ 
(which have the same underlying set). Since the actions of $G$ and $H^+$ commute, we get an action of $G\times H^+$,
and the corresponding action groupoid embeds in $\mathfrak{M}(\cD,\varphi_0,\delta)$.

Recall the groupoid $\mathfrak{Q}(\cD,\varphi_0)$. Its full subgroupoid $\mathfrak{Q}(\cD,\varphi_0,\delta)$,
determined by the condition $\overline{B}=\delta B$, also embeds in $\mathfrak{M}(\cD,\varphi_0,\delta)$, 
by sending $(\cV,B)\mapsto(\cD,\varphi_0,\cV,B)$ and $\psi_1\mapsto (\mathrm{id}_\cD,\psi_1)$.
For any morphism $\theta$ in $\mathfrak{M}(\cD,\varphi_0,\delta)$, we have (unique) factorizations: 
$\theta=\theta'\theta''=\tilde{\theta}''\tilde{\theta}'$ where $\theta'$ and $\tilde{\theta}'$ are morphisms in 
the action groupoid of $G\times H^+$, and $\theta''$ and $\tilde{\theta}''$ are morphisms in $\mathfrak{Q}(\cD,\varphi_0,\delta)$.
It follows that $G\times H^+$ acts on the connected components of $\mathfrak{Q}(\cD,\varphi_0,\delta)$, 
and each connected component of $\mathfrak{M}(\cD,\varphi_0,\delta)$ is the union of an orbit of this action.

It remains to parametrize the connected components of $\mathfrak{Q} := \mathfrak{Q}(\cD,\varphi_0,\delta)$, which we did by means of the set 
$\ParSet := \ParSet(G,\cD,\varphi_0,\delta)$ or, alternatively, $\InParSet := \InParSet(G,\cD,\varphi_0,\delta)$, and calculate the action of 
$G\times H^+$ (or its subgroup that has the same orbits) on $\ParSet$ (or $\InParSet$). The parametrization, that is, a mapping $P$ from 
$\ParSet$ (or $\InParSet$) to the object class of $\mathfrak{Q}$ that selects a unique representative in  
each connected component of $\mathfrak{Q}$, depends on the choice of a transversal for $T$ in $G$ and on the choice 
of the elements $X_t\in\cD_t$ for $t\in T$. The first is tantamount to selecting representatives for the isomorphism classes 
of graded right $\cD$-modules of dimension $1$ over $\cD$, and the second affects the matrix $\Phi$ representing $B$ in Theorem \ref{th:struct}.
Note that  $M(\cD,\varphi_0,g_0,\kappa,\TrSig,\delta)\cong E(P(g_0,\kappa,\TrSig))$.

To summarize: a parametrization of the isomorphism classes of $G$-graded algebras with involution as in Theorem \ref{th:struct} 
is given by $M(\cD,\varphi_0,g_0,\kappa,\TrSig,\delta)$, where $(g_0,\kappa,\TrSig)$ ranges over a set of representatives 
of the $G\times H^+$-orbits in $\ParSet(G,\cD,\varphi_0,\delta)$, $\cD$ ranges over a set of representatives of the 
isomorphism classes of real graded division algebras with $\De$ isomorphic to $\RR$, $\CC$ or $\HH$, 
$\varphi_0$ ranges over a set of representatives of the equivalence classes of degree-preserving involutions on $\cD$ 
that restrict to the conjugation on $\De$, and finally $\delta\in\{\pm 1\}$ if $\varphi_0$ restricts to the identity on $Z(\cD)$
and $\delta=1$ otherwise. The equivalence relation for the involutions is given by $\varphi_0\sim\varphi_0'$ if 
$\psi_0^{-1}\varphi_0'\psi_0=\Int(d)\varphi_0$ for some $\psi_0\in\mathrm{Aut}^G(\cD)$ 
and $d\in\Dgr^\times$ satisfying $\varphi_0(d)\in\{\pm d\}$.

\subsection{Central simple algebras with involution}\label{sse:consequences}

We now specialize to the setting needed for applications to classical central simple Lie algebras:
$M(\cD,\varphi_0,g_0,\kappa,\TrSig,\delta)$ is finite-dimensional and central simple as an ungraded algebra 
with involution. In other words, $\cD$ is finite-dimensional (hence semisimple because of the generalization of Maschke's Theorem),
its center $Z(\cD)$ is either $\RR$, $\CC$ or $ \tilde{\CC} := \RR \times \RR $,
and in the latter two cases $\varphi_0$ is an involution of the second kind, that is,
$ \varphi_0 \vert_{Z(\cD)} \neq \mathrm{id}_{Z(\cD)} $. 
Note that if $Z(\cD)=\tilde{\CC}$ then it must be nontrivially graded.
Also, in all cases we have $ C = \RR^{\times} $ and hence, by Remark \ref{rem:reduction_of_action},
we may replace $C$ with $ \{ \pm 1 \} $,
where $ \epsilon \in \{ \pm 1 \} $ acts on $\ParSet(G,\cD,\varphi_0,\delta)$ in the obvious way:
$ \epsilon \cdot ( g_0 , \kappa , \TrSig ) = ( g_0 , \kappa , \epsilon \TrSig ) $.

It is shown in \cite{BRpr} that $T$ must be an elementary $2$-group,
except possibly in the case $ \CC = \De = Z(\cD) $. We give a proof here for completeness and to introduce some notation for future use.
First, recall that $\rad\beta=\supp Z(\cD)$ by Lemma \ref{lem:suprad}. Since $\beta$ takes values in $\{\pm 1\}$ 
by Lemma \ref{lem:normbeta}, $t^2\in\rad\beta$ for all $t\in K$. 
Moreover, if $K\ne T$ then, for any $t\in T\setminus K$, we also have $t^2\in\rad\beta$ by Lemma \ref{lem:Xt2}, 
so we conclude that $t^2\in\rad\beta$ for all $t\in T$.
If $Z(\cD)$ is trivially graded, this immediately implies that $T$ is an elementary $2$-group.
Also note that $\beta$ is nondegenerate in this case.
If $Z(\cD)$ is nontrivially graded then $Z(\cD)$ is $\CC$ or $\tilde{\CC}$ and $\rad\beta=\supp Z(\cD)=\{e,f\}$ 
where $f\in K$ is an element of order $2$, 
which will be referred to as the \emph{distinguished element}. 
(Indeed, $f$ is the degree of an imaginary unit $\mathbf{i}$, 
where $\mathbf{i}^2=-1$ in the case of $\CC$ and $\mathbf{i}^2=1$ in the case of $\tilde{\CC}$.)
We claim that $f$ is not a square in $T$. Indeed, if $t^2=f$ then $X_t^2$ generates $Z(\cD)$ as an $\RR$-algebra and at the same time 
$\varphi_0(X_t^2)=\varphi_0(X_t)^2=\eta(t)^2 X_t^2=X_t^2$, which contradicts the fact $ \varphi_0 \vert_{Z(\cD)} \neq \mathrm{id}_{Z(\cD)} $.
It follows that $t^2=e$ for all $t\in T$, that is, $T$ is an elementary $2$-group. 

In particular, Remark \ref{rem:reduction_of_action} implies that $A\cong Z^1(T,Z(\De)^\times)$ 
(see Equation \eqref{def:groupA}) acts on $\ParSet(G,\cD,\varphi_0,\delta)$ 
through its quotient $\mathrm{Hom}(K,\{\pm 1\})$. To be precise, for $\nu\in\mathrm{Hom}(K,\{\pm 1\})$, we have
\[
\nu \cdot (g_0,\kappa,\TrSig) = (g_0,\kappa,\nu\TrSig)
\text{ where }(\nu\TrSig)(x) := \nu(g_0 \xi(x)^2)\TrSig(x).
\]
The same formula applies to the ``quadratic form'' $\nu:T\to\{\pm 1\}$ in the case  $ \CC = \De = Z(\cD) $,
if $T$ happens to be an elementary $2$-group (see Equation \eqref{def:groupA_}).

\begin{corollary}\label{cor:1}
Let $\cD$ be finite-dimensional with $ Z(\cD) = \RR $.
Then the graded algebras with involution $M(\cD,\varphi_0,g_0,\kappa,\TrSig,\delta)$ and $M(\cD,\varphi_0,g_0',\kappa',\TrSig',\delta)$ 
are isomorphic if and only if
$(g_0,\kappa,\TrSig)$ and $(g_0',\kappa',\TrSig')$
are in the same $ G \times \{ \pm 1 \} $-orbit.
\end{corollary}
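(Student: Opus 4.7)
The strategy is to apply Theorem~\ref{th:iso} and show that, under the hypothesis $Z(\cD)=\RR$, the $(C\rtimes A)$-action on $\ParSet(G,\cD,\varphi_0,\delta)$ reduces, up to the $G$-action, to the $\{\pm 1\}$-action. Since $G\times\{\pm 1\}$-orbits are trivially contained in $G\times(C\rtimes A)$-orbits, only the reverse inclusion needs to be proved. Most of the necessary ingredients are already in the preamble of Subsection~\ref{sse:consequences}: because $Z(\cD)=\RR$ is trivially graded, Lemma~\ref{lem:suprad} gives $\mathrm{rad}\,\beta=\{e\}$, and the argument there shows that $T$, and hence $K\subseteq T$, is an elementary $2$-group; the group $C=\RR^{\times}$ reduces to $\{\pm 1\}$ acting by a global sign on $\TrSig$, while by Remark~\ref{rem:reduction_of_action} the group $A$ reduces to $\bar A=\mathrm{Hom}(K,\{\pm 1\})$ (here $T^{[2]}=\{e\}$).

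Since $\beta$ is a nondegenerate alternating bicharacter on the elementary $2$-group $K$, the map $s\mapsto\beta(s,\cdot)$ is an isomorphism $K\to\bar A$, so every $\nu\in\bar A$ takes the form $\nu(t)=\beta(s,t)$ for a unique $s\in K$. The crux of the argument is then the identity $\nu\cdot(g_0,\kappa,\TrSig)=(s,\epsilon_s)\cdot(g_0,\kappa,\TrSig)$ for a suitable sign $\epsilon_s\in\{\pm 1\}$ depending only on $s$. Plugging $g=s$ into the $G$-action on $\ParSet$ and using that $s\in T$ and $T$ is an elementary $2$-group, one checks $s\cdot g_0=g_0$ and $s\cdot\kappa=\kappa$, while in Equation~\eqref{eq:G-action} one gets $u=s$ and $t=g_0\xi(x)^2\in K$ whenever $\TrSig(x)\ne 0$. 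The quantity $X_{tu^2}^{-1}X_uX_tX_u$ then collapses to $\beta(s,t)X_s^2$ via the defining relation of $\beta$, and the scaling convention of Notation~\ref{nota:scaling_Xt} ensures $X_s^2\in\RR$ for $s\in K$ (using $\varphi_0(X_s)=\delta X_s$ together with the fact that $\varphi_0\vert_{\De}$ is conjugation when $\De\cong\CC$). Hence the sign $\epsilon_s:=\eta(s)\,\mathrm{sign}(X_s^2)$ is independent of $x$, and the $G$-action formula reads $(s\cdot\TrSig)(x)=\epsilon_s\,\beta(s,t)\,\TrSig(x)=\epsilon_s(\nu\TrSig)(x)$, as required.

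The main obstacle is this last computation, which requires combining several scaling conventions at once---most delicately the fact that $X_s^2\in\RR$ for $s\in K$ in the case $\De\cong\CC$, where $K$ may be a proper index-$2$ subgroup of $T$---and verifying that $\beta$ takes values in $\{\pm 1\}$ (which follows from Lemma~\ref{lem:normbeta} combined with $Z(\cD)=\RR$) so that multiplying by signs genuinely lands in $\{\pm 1\}$. Once the identity is established, the three possibilities $\De\in\{\RR,\CC,\HH\}$ all fall under a uniform argument, yielding the corollary.
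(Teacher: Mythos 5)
Your proof is correct, but it takes a different route from the paper's. The paper's own argument is a two-line appeal to the Skolem--Noether theorem: since $Z(\cD)=\RR$ and $\cD$ is finite-dimensional simple, every automorphism of $\cD$ is inner, so $A\subseteq\mathrm{Int}^G(\cD,\varphi_0)$, and by Remark~\ref{rem:Ain} inner automorphisms do not enlarge the $G\times C$-orbits (they are absorbed into the module isomorphism $\psi_1$ at the level of pairs $(\cV,B)$). You instead work entirely on the parameter set: using that $\rad\beta=\supp Z(\cD)=\{e\}$, you identify $\bar A=\mathrm{Hom}(K,\{\pm1\})$ with $K$ via the nondegenerate pairing $s\mapsto\beta(s,\cdot)$, and then verify from Equation~\eqref{eq:G-action} that $\nu=\beta(s,\cdot)$ acts on $(g_0,\kappa,\TrSig)$ exactly as $(s,\epsilon_s)\in G\times\{\pm1\}$ with $\epsilon_s=\eta(s)\,\mathrm{sign}(X_s^2)$; your checks that $u=s$, $tu^2=t$, $X_t^{-1}X_sX_tX_s=\beta(s,t)X_s^2$, and $X_s^2\in\RR^\times$ (including the case $\De\cong\CC$, $s\in K$, via the scaling $\varphi_0(X_s)=\delta X_s$) are all sound, as is the reduction of $C=\RR^\times$ to a global sign. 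In effect you are explicitly unpacking, for the particular inner automorphisms $\Int(X_s)$, why Remark~\ref{rem:Ain} holds: twisting by $\Int(X_s)$ amounts to a shift by $\deg X_s=s$ together with a sign. The paper's route is shorter and needs no computation; yours is self-contained on the level of the classification data and makes the absorbed $G$-element and sign completely explicit, at the cost of invoking the nondegeneracy of $\beta$ and several normalization conventions at once.
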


\begin{proof}
By Skolem--Noether Theorem, we have $A\subseteq\mathrm{Int}^G(\cD,\varphi_0)$, so the result follows from Theorem \ref{th:iso}
and Remark \ref{rem:Ain}.
\end{proof}

In the next two corollaries, since $ \varphi_0 \vert_{Z(\cD)} \neq \mathrm{id}_{Z(\cD)} $, we necessarily have $\delta=1$.

\begin{corollary}\label{cor:2}
Suppose that $\cD$ is finite-dimensional and $Z(\cD)$ is $\CC$ or $\tilde{\CC}$, nontrivially graded.
Pick $ \nu \in \mathrm{Hom}( K , \{ \pm 1 \} ) $
such that $ \nu(f) = -1 $.
Then the graded algebras with involution $M(\cD,\varphi_0,g_0,\kappa,\TrSig,1)$ and $M(\cD,\varphi_0,g_0',\kappa',\TrSig',1)$
are isomorphic if and only if
$(g_0',\kappa',\TrSig')$ lies in the same $ G \times \{ \pm 1 \} $-orbit
as $(g_0,\kappa,\TrSig)$ or $(g_0,\kappa, \nu \TrSig )$.
\end{corollary}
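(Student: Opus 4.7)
The plan is to apply Theorem \ref{th:iso} and then show that most of the $A$-action on parameter triples is already realized by the $G$-action, leaving only a $\ZZ_2$-worth of extra orbits indexed by the value of $\nu(f)$.

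First I will record the structural reductions. Since $Z(\cD)$ has $\RR$-dimension $2$, contains $Z(\De)$, and is nontrivially graded, the possibility $\De\cong\CC$ is excluded: it would force $Z(\cD)=Z(\De)=\CC$ to be trivially graded. Hence $\De\in\{\RR,\HH\}$ and $K=T$. By the discussion preceding Corollary \ref{cor:1}, $T$ is an elementary $2$-group, $\beta$ takes values in $\{\pm 1\}$, and $\rad\beta=\{e,f\}$. By Remark \ref{rem:reduction_of_action}, $C$ may be replaced by $\{\pm 1\}$ and $A$ by $\hat T:=\mathrm{Hom}(T,\{\pm 1\})$. The task then reduces to showing that the $G\times(\{\pm 1\}\rtimes\hat T)$-orbit of $(g_0,\kappa,\TrSig)$ equals the union of the $G\times\{\pm 1\}$-orbits of $(g_0,\kappa,\TrSig)$ and $(g_0,\kappa,\nu\TrSig)$.

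The central step is to compute, for $g\in T$, the action $g\cdot(g_0,\kappa,\TrSig)$ via Equation \eqref{eq:G-action}. For such $g$ one has $\xi(g^{-1}x)=\xi(x)$, $u=g$, $u^2=e$ and $t=g_0\xi(x)^2$, and using $X_gX_tX_g=\beta(g,t)X_tX_g^2$ the internal sign factor becomes $\eta(g)\,\mathrm{sign}\bigl(\beta(g,t)X_g^2\bigr)$. Since $\De\in\{\RR,\HH\}$ and $X_g$ was chosen in $C_\cD(\De)$, the element $X_g^2$ lies in $\De\cap C_\cD(\De)=\RR^\times$, so its sign $s_g$ is well defined. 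Moreover $g\in T$ fixes both $g_0$ (as $g^2=e$) and $\kappa$. Setting $\epsilon_g:=\eta(g)s_g\in\{\pm 1\}$ and $\chi_g:=\beta(g,\cdot)\in\hat T$, I obtain
\[
g\cdot(g_0,\kappa,\TrSig)\;=\;\epsilon_g\cdot(g_0,\kappa,\chi_g\TrSig).
\]

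It remains to identify the image of the homomorphism $T\to\hat T$, $g\mapsto\chi_g$. Its kernel is $\rad\beta$ by Lemma \ref{lem:suprad}, and its image is contained in the index-$2$ subgroup $H:=\{\chi\in\hat T:\chi(f)=1\}$ because $f\in\rad\beta$. Since $T$ is an elementary $2$-group, $|T/\rad\beta|=|T|/2=|H|$, so the image is exactly $H$. Consequently every $\chi\in H$ is realized by some $g\in T$, and so $(g_0,\kappa,\chi\TrSig)$ lies in the $G\times\{\pm 1\}$-orbit of $(g_0,\kappa,\TrSig)$. An arbitrary $\chi\in\hat T$ either satisfies $\chi(f)=1$ and falls into this orbit, or satisfies $(\chi\nu^{-1})(f)=1$, so that $\chi\TrSig=(\chi\nu^{-1})(\nu\TrSig)$ lies in the $G\times\{\pm 1\}$-orbit of $(g_0,\kappa,\nu\TrSig)$. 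The converse direction is immediate from $1,\nu\in A$. The main technical obstacle will be the sign bookkeeping in the $g\in T$ computation --- particularly verifying that $X_g^2\in\RR^\times$ and producing the clean formula above --- while the cardinality argument pinning the image of $\chi_g$ to exactly $H$ is where the nontrivial grading of $Z(\cD)$ enters decisively.
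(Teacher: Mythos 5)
Your structural reduction contains a false step that removes an entire case of the corollary. You claim that $Z(\cD)$ contains $Z(\De)$ and hence that $\De\cong\CC$ is impossible when $Z(\cD)$ is nontrivially graded. But $Z(\De)=Z(\cD_e)$ need not lie in $Z(\cD)$: the paper devotes considerable effort (the subgroup $K$, Lemmas \ref{lem:sses}, \ref{lem:Xt2}, \ref{lem:extended_beta}) precisely to the case $\CC\cong\De\not\subseteq Z(\cD)$, where conjugation by $X_t$ for $t\in T\setminus K$ acts as complex conjugation on $\De$. In Cases (2) and (4) of Subsection \ref{sse:notation} the identity component $\Delta_0$ is explicitly allowed to be $\CC$, and Theorems \ref{th:AII_RH} and \ref{th:AII_C} use this. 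In that situation $K$ has index $2$ in $T$, the statement's $\nu\in\mathrm{Hom}(K,\{\pm1\})$ is genuinely a character of $K$ rather than of $T$, and your entire computation (replacing $A$ by $\hat T$, the kernel/image count for $g\mapsto\beta(g,\cdot)$) is carried out on the wrong group. The counting can in fact be repaired — one shows the left radical of the extended bicharacter $\beta:T\times K\to\{\pm1\}$ has order $4$, so the image of $T\to\mathrm{Hom}(K,\{\pm1\})$ is exactly $\{\chi\mid\chi(f)=1\}$ — but none of this is in your proof, so the case $\De\cong\CC$ is simply unproven. The part you do treat ($\De\in\{\RR,\HH\}$, $K=T$) is correct: there $\rad\beta=\{e,f\}$, the image of $g\mapsto\beta(g,\cdot)$ is all of $H$ by your cardinality argument, and Proposition \ref{prop:G-action_simplification_nonCcentral} confirms your sign factor $\eta(u)\mu(u)\beta(u,t)$.

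It is worth noting that the paper's own proof sidesteps all of this case analysis. Instead of showing that the $\nu(f)=1$ part of $A$ is realized inside the $G$-action, it observes via Skolem--Noether that $\psi_0\in A$ is inner exactly when it fixes $Z(\cD)$ pointwise, i.e.\ when $\nu(f)=1$, and then invokes Remark \ref{rem:Ain}: inner automorphisms of $(\cD,\varphi_0)$ can be absorbed into the module isomorphism $\psi_1$ and therefore act trivially on the orbit set, with no computation of characters or radicals needed. Your route, where it works, is a more explicit verification that identifies which shifts $g\in T$ realize which characters, but it buys nothing here and costs you the $\De\cong\CC$ case; if you keep it, you must treat $K\ne T$ separately using the extended $\beta$ on $T\times K$.
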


\begin{proof}
Let $A_{\mathrm{in}}=A\cap\mathrm{Int}^G(\cD,\varphi_0)$. By Skolem--Noether Theorem, $\psi_0$ is inner 
if and only if $\psi_0\vert_{Z(\cD)}=\mathrm{id}_{Z(\cD)}$, which happens if and only if $\nu(f)=1$. 
Therefore, the group $A$ is generated by $A_{\mathrm{in}}$ and a single $\psi_0$ with $\nu(f)=-1$.
The result follows from Theorem \ref{th:iso} and Remark \ref{rem:Ain}.
\end{proof}

Finally, let us analyse the remaining case $ \CC = \De = Z(\cD) $.

\begin{corollary}\label{cor:3}
Suppose that $\cD$ is finite-dimensional and $Z(\cD)$ is $\CC$, trivially graded.
If $T$ is an elementary $2$-group, pick $ \nu : T \to \{ \pm 1 \} $ such that
$ \nu(st) = \nu(s) \nu(t) \beta(s,t) $ for all $ s,t \in T $.
Then the graded algebras with involution $M(\cD,\varphi_0,g_0,\kappa,\TrSig,1)$ and $M(\cD,\varphi_0,g_0',\kappa',\TrSig',1)$
are isomorphic if and only if
\begin{itemize}
\item when $T$ is not an elementary $2$-group:
$(g_0',\kappa',\TrSig')$ lies in the same $ G \times \{ \pm 1 \} $-orbit as $(g_0,\kappa,\TrSig)$;
\item when $T$ is an elementary $2$-group:
$(g_0',\kappa',\TrSig')$ lies in the same $ G \times \{ \pm 1 \} $-orbit
as $(g_0,\kappa,\TrSig)$ or $(g_0,\kappa, \nu \TrSig )$.
\end{itemize}
\qed
\end{corollary}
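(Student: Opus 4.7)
The plan is to specialize Theorem \ref{th:iso} to the case $\CC = \De = Z(\cD)$. Here $\varphi_0\vert_{Z(\cD)}$ is complex conjugation, so $\delta=1$ as already noted; the support of $Z(\cD)$ is $\{e\}$, so $\rad\beta=\{e\}$, i.e., $\beta$ is nondegenerate on $T$; and $K=T$ since $\De\subseteq Z(\cD)$. By Remark \ref{rem:reduction_of_action} the group $C$ reduces to $\bar C=\{\pm 1\}$; moreover, since $Z(\cD)$ is trivially graded, every $c\in C$ has $\deg c=e$.

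The first real step is to identify the group $A$ defined in \eqref{def:groupA_}. If $T$ is not an elementary $2$-group, then Lemma \ref{lem:comm2} rules out $\CC$-antilinear graded automorphisms of $\cD$: their existence would force $\beta$ to take values in $\{\pm 1\}$, and nondegeneracy of $\beta$ would then force $T$ to be an elementary $2$-group, contradicting our assumption. Thus every graded automorphism is $\CC$-linear, hence inner by Skolem--Noether, and by Remark \ref{rem:Ain} these can be discarded, giving $A=1$. If $T$ is an elementary $2$-group, then by \cite[Proposition 2.51]{EK13} a map $\nu$ satisfying the quadratic condition exists, and Lemma \ref{lem:comm2} shows that the associated $\CC$-antilinear $\psi_0\colon X_t\mapsto X_t\nu(t)$ commutes with $\varphi_0$ precisely because $\nu$ is $\{\pm 1\}$-valued; since any $\CC$-linear automorphism is inner, $\psi_0$ generates $A\cong\ZZ_2$ modulo inner automorphisms.

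Next I read off the action. Because $\deg c=e$, the formula $\psi_0(c)=c\,\nu(\deg c)=c$ shows that $A$ acts trivially on $C$, so $C\rtimes A=\bar C\times A=\{\pm 1\}\times A$. The formula preceding Theorem \ref{th:iso} for the action of $\psi_0$ on $\ParSet(G,\cD,\varphi_0,1)$ gives $\underline{\TrSig}(x)=\mathrm{sign}(\nu(g_0\xi(x)^2))\,\TrSig(x)$, and since $\nu$ is already $\{\pm 1\}$-valued this is exactly $(\nu\TrSig)(x)$ in the notation of the corollary. Invoking Theorem \ref{th:iso} now yields both cases: when $A$ is trivial the $G\times(C\rtimes A)$-orbits collapse to $G\times\{\pm 1\}$-orbits, and when $A=\langle\psi_0\rangle$ each such orbit is the union of the $G\times\{\pm 1\}$-orbits through $(g_0,\kappa,\TrSig)$ and $(g_0,\kappa,\nu\TrSig)$.

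The main obstacle, and essentially the only nontrivial input beyond Theorem \ref{th:iso}, is the analysis of $\CC$-antilinear graded automorphisms via Lemma \ref{lem:comm2}: one must verify both that such automorphisms exist exactly when $T$ is an elementary $2$-group (which is where nondegeneracy of $\beta$ plays a decisive role) and that the distinguished $\psi_0$ lies in $\mathrm{Aut}^G(\cD,\varphi_0)$. Everything else is a direct specialization of the general isomorphism theorem and the explicit action formulas derived for $\ParSet(G,\cD,\varphi_0,\delta)$.
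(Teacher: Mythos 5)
Your proposal is correct and follows essentially the same route as the paper: the paper leaves Corollary \ref{cor:3} without a separate proof precisely because the identification of $A$ (trivial unless $T$ is an elementary $2$-group, in which case it is generated by the $\CC$-antilinear $\psi_0$ with $X_t\mapsto X_t\nu(t)$), the reduction of $C$ to $\{\pm 1\}$, and the action formulas are all established in the preliminary remarks of Section \ref{se:assoc_classification}, so the statement is a direct specialization of Theorem \ref{th:iso}. You reconstruct exactly those steps, including the key point that nondegeneracy of $\beta$ forces $T$ to be an elementary $2$-group whenever a $\CC$-antilinear graded automorphism exists.
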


Thus, we see that, in the setting at hand, the isomorphism problem essentially boils down to the $G$-action on $\ParSet(G,\cD,\varphi_0,\delta)$.
Recall that the action is the following: 
$g\cdot(g_0,\kappa,\TrSig)=(g^{-2}g_0,g\cdot\kappa,g\cdot\TrSig)$ where $(g\cdot\kappa)(x)=\kappa(g^{-1}x)$ for all $x\in G/T$ 
and $g\cdot\TrSig$ is $\underline{\TrSig}$ given by Equation \eqref{eq:G-action}. 
(There is abuse of notation in writing $g\cdot\TrSig$ because, in general, $\underline{\TrSig}$ depends not only on $\TrSig$ but also on $g_0$.)
The sign factor in Equation \eqref{eq:G-action} can be simplified in our setting, but we first need to recall some facts about $\cD$ and $\varphi_0$.

When $Z(\cD)$ is $\CC$ with trivial grading, $\cD$ is a graded division algebra over $\CC$, so it is determined up to isomorphism of graded algebras 
by the pair $(T,\beta)$ where the alternating bicharacter $\beta:T\times T\to\CC^\times$ is nondegenerate 
(see e.g. \cite[Theorem 2.15]{EK13}).
In the remaining cases, we know that $T$ must be an elementary $2$-group and $\beta:K\times K\to\{\pm 1\}$. 

If $\De=\RR$ or $\De\cong\HH$ then $K=T$ and $X_t^2\in\RR$ for all $t\in T$, 
hence each $X_t$ can be normalized with a real scalar so that $X_t^2\in\{\pm 1\}$. 
This gives us a function $\mu:T\to\{\pm 1\}$ defined by $\mu(t)=X_t^2$. 
Moreover, each normalized element $X_t$ is unique up to sign, so the function $\mu$ is uniquely determined. 
It is shown in \cite[Theorems 15, 16 and 19]{R16}
(see \cite[\S 10]{BRpr} for the semisimple case)
that $\mu:T\to\{\pm 1\}$ is a quadratic form with polar form $\beta$
and that the pair $(T,\mu)$ determines the graded algebra $\cD$ up to isomorphism.

If $\De\cong\CC$ then $K\ne T$ (since $Z(\cD)$ is nontrivially graded) and, for any $t\in T\setminus K$, 
we have $X_t^2\in Z(\cD)_e=\RR$ (using Lemma \ref{lem:Xt2}). 
Moreover, for all $z\in\De$, we have $(X_tz)^2=X_t^2|z|^2$, hence each $X_t$ can be normalized 
so that $X_t^2\in\{\pm 1\}$, and this gives a uniquely determined function $\mu:T\setminus K\to\{\pm 1\}$. 
As to the elements $X_t$ with $t\in K$, we have $(X_tz)^2=X_t^2z^2$ for all $z\in \De$, 
hence they can also be normalized (with complex scalars) to satisfy $X_t^2\in\{\pm 1\}$, 
but the values $+1$ or $-1$ can be chosen arbitrarily. 
It is shown in \cite[Theorems 22 and 23]{R16}
(see \cite[\S 10]{BRpr} for the semisimple case)
that $\mu:T\setminus K\to\{\pm 1\}$ is what is called there a \emph{nice map},
that is, for some (and hence any) $u\in T\setminus K$, the function $\mu_u(t) := \mu(ut)\mu(u)$ is a quadratic form
$K\to\{\pm 1\}$ with polar form $\beta$, and that $(T,\mu)$ determines the graded algebra $\cD$ up to isomorphism.

Thanks to Skolem--Noether Theorem and Lemma \ref{lem:equiv_of_involutions}, 
all degree-preserving involutions on $\cD$ (of the second kind if $Z(\cD)$ is $\CC$ or $\tilde{\CC}$) are 
equivalent in the sense of Definition \ref{def:equiv_of_involutions}, hence $\varphi_0$ can be fixed arbitrarily. 
However, in all cases except $Z(\cD)=\tilde{\CC}$,
the classification in \cite{BRpr} suggests especially nice choices,
which will be referred to as the \emph{distinguished involutions}
(see \cite[\S 11]{BRpr}).
If $ Z(\cD) = \RR$ or $ Z(\cD) = \CC $ with nontrivial grading, then there is a unique
distinguished involution, whereas in the case $ Z(\cD) = \CC $ with trivial grading,
there is an isomorphism class of distinguished involutions.
They are defined as follows.

When $Z(\cD)$ is $\CC$ with trivial grading, we have $\De=Z(\cD)$ and $K=T$, 
and we agreed to choose $X_t$ so that $\eta(t)=1$ for all $t\in T$ 
(see Remark \ref{rem:delta} and Notation \ref{nota:scaling_Xt}) or, in other words, $\varphi_0(X_t)=X_t$ for all $t\in T$. 
Since $\varphi_0(X_tz)=X_t\bar{z}$ for all $z\in\De$, each element $X_t$ is determined up to a real scalar.
We can choose this scalar so that $|X_t^{o(t)}|=1$, where $o(t)$ denotes the order of $t$,
and this determines $X_t$ up to sign. Since $\varphi_0(X_t^{o(t)})=X_t^{o(t)}$, we have $X_t^{o(t)}\in\{\pm 1\}$.  
If $o(t)$ is odd then there is a unique choice of $X_t$ such that $X_t^{o(t)}=1$,
whereas if $o(t)$ is even then $X_t^{o(t)}$ is the same for both choices of $X_t$.
The distinguished involutions are characterized by the property that these normalized elements $X_t$
satisfy $X_t^{o(t)}=1$ for all $t\in T$. 

In the remaining cases, the distinguished involution is obtained by setting $\eta=\mu$. This requires some comment.
If $\De=\RR$ or $\De\cong\HH$ then $K=T$ and $\mu$ is a quadratic form with polar form $\beta$, hence 
the mapping $X_t\mapsto\mu(t)X_t$ uniquely extends to an involution $\varphi_0$ such that $\varphi_0\vert_{\De}$
is the conjugation.
If $\De\cong\CC$ (hence $K\ne T$), the nice map $\mu$ is defined only on $T\setminus K$. 
However, we agreed to choose the elements $X_t$ for $t\in K$ so that $\varphi_0(X_t)=\delta X_t$ 
(Notation \ref{nota:scaling_Xt}), which determines them up to a real scalar. 
Moreover, for these elements we have $\varphi_0(X_t^2)=X_t^2$, so $X_t^2\in\RR$ and hence $X_t$ can be normalized 
with real scalars so that $X_t^2\in\{\pm 1\}$. 
This determines a unique extension of $\mu$ to a function $T\to\{\pm 1\}$, which we still denote by $\mu$, by setting 
$\mu(t) := X_t^2$ for all $t\in T$. The distinguished involution is characterized by the property that $\eta(t)=\mu(t)$ 
for all $t\in T$; in fact, it is sufficient to require $\eta(t)=\mu(t)$ for all $t\in T\setminus K$ 
(see Lemma \ref{lem:extended_beta} below), that is, 
$\varphi_0(X_t)=\mu(t)X_t$ for all $t\in T\setminus K$.
It turns out that $\varphi_0$ thus defined is of the second kind if $Z(\cD)=\CC$, 
but of the first kind if $Z(\cD)=\tilde{\CC}$, so it is not suitable for our purposes in this latter case.

We will need one more ingredient in the case $K\ne T$, namely, the extension of 
$\beta:K\times K\to\{\pm 1\}$ to a function $T\times K\to\{\pm 1\}$, which we still denote by $\beta$.    
As we already observed, the elements $X_t$ with $t\in K$ are determined by the condition $\varphi_0(X_t)=\delta X_t$
up to a real scalar, hence Equation \eqref{def:beta} defines $\beta(s,t)\in\De$ uniquely as long as 
at least one of the elements $s$ and $t$ is in $K$.

\begin{lemma}\label{lem:extended_beta}
If $\De\cong\CC$ and $K\ne T$ then the function $\beta$ defined on $T\times K$ by the equation $X_uX_t=\beta(u,t)X_tX_u$
takes values in $\{\pm 1\}$ and is multiplicative in the first variable. It also satisfies
\[
\beta(u,t)=\mu(ut)\mu(u)\mu(t)=\eta(ut)\eta(u)\delta
\]
for all $u\in T\setminus K$ and $t\in K$. 
In particular, if $\eta$ and $\mu$ coincide on $T\setminus K$ then they coincide on the whole $T$. 
\end{lemma}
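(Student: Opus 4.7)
The plan is to derive the two explicit formulas for $\beta(u,t)$ (both of which immediately give $\beta(u,t)\in\{\pm 1\}$), then establish multiplicativity in the first variable, and finally deduce the corollary. The key facts used throughout are: $\varphi_0$ is an antiautomorphism with $\varphi_0(X_r)=\eta(r)X_r$ and $\eta(t)=\delta$ for $t\in K$; $X_v$ acts $\De$-linearly for $v\in K$ and $\De$-antilinearly for $v\in T\setminus K$ (so $X_v z=\overline{z}X_v$ for $z\in\De$); and the normalizations $X_r^2=\mu(r)\in\{\pm 1\}$.

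For $\beta(u,t)=\eta(ut)\eta(u)\delta$, fix $u\in T\setminus K$ and $t\in K$, write $X_uX_t=\lambda X_{ut}$ with $\lambda\in\De^\times$, and apply $\varphi_0$ to both sides. The left side gives $\varphi_0(X_uX_t)=\varphi_0(X_t)\varphi_0(X_u)=\delta\eta(u)X_tX_u$, while the right side gives $\varphi_0(\lambda X_{ut})=\eta(ut)X_{ut}\overline{\lambda}=\eta(ut)\lambda X_{ut}=\eta(ut)X_uX_t$, using $X_{ut}\overline{\lambda}=\lambda X_{ut}$ since $ut\in T\setminus K$. Comparing yields $X_tX_u=\delta\eta(u)\eta(ut)X_uX_t$, and combining with the definition $X_tX_u=\beta(u,t)^{-1}X_uX_t$ gives $\beta(u,t)^{-1}=\delta\eta(u)\eta(ut)\in\{\pm 1\}$, so $\beta(u,t)\in\{\pm 1\}$ and $\beta(u,t)=\eta(ut)\eta(u)\delta$.

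For $\beta(u,t)=\mu(ut)\mu(u)\mu(t)$, compute $(X_uX_t)^2$ in two ways. Using $X_tX_u=\beta(u,t)X_uX_t$ (valid now that $\beta(u,t)\in\{\pm 1\}$) and the normalizations, $(X_uX_t)^2=X_u(X_tX_u)X_t=\beta(u,t)X_u^2X_t^2=\beta(u,t)\mu(u)\mu(t)$, noting that $\beta(u,t)\in\RR$ commutes with $X_u$. Using $X_uX_t=\lambda X_{ut}$ and $X_{ut}\overline{\lambda}=\lambda X_{ut}$, $(X_uX_t)^2=\lambda\overline{\lambda}X_{ut}^2=|\lambda|^2\mu(ut)$. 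Equating these forces $|\lambda|^2=1$ (since the left side lies in $\{\pm 1\}$ and $|\lambda|^2>0$) and yields the identity.

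Multiplicativity in the first variable follows by computing $\Int(X_vX_{v'})(X_t)$ two ways: factor $X_vX_{v'}=\lambda'X_{vv'}$ with $\lambda'\in\De^\times$ and observe that $\Int(\lambda')(X_t)=X_t$ since $X_t$ commutes with $\De$ for $t\in K$, giving $\Int(X_vX_{v'})(X_t)=\beta(vv',t)X_t$; alternatively, $\Int(X_v)\Int(X_{v'})(X_t)=\Int(X_v)(\beta(v',t)X_t)=\beta(v,t)\beta(v',t)X_t$, since $\beta(v',t)\in\RR$ is fixed by $\Int(X_v)$ regardless of whether that action is $\De$-linear or $\De$-antilinear. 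For the corollary, since $K\ne T$ we can pick $u\in T\setminus K$; the two identities give $\eta(ut)\eta(u)\delta=\mu(ut)\mu(u)\mu(t)$, and substituting $\eta=\mu$ on $T\setminus K$ (so $\eta(ut)=\mu(ut)$ and $\eta(u)=\mu(u)$) cancels those factors, leaving $\mu(t)=\delta=\eta(t)$. The only real care needed throughout is tracking when $X_v$ acts $\De$-linearly versus antilinearly; there is no deeper obstacle.
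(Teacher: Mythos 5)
Your proof is correct and follows essentially the same route as the paper: both formulas come from applying $\varphi_0$ to, and squaring, the relation $X_uX_t=\lambda X_{ut}$, using $X_{ut}\bar\lambda=\lambda X_{ut}$ for $ut\in T\setminus K$. The only cosmetic differences are that you extract $\beta(u,t)\in\{\pm 1\}$ and the $\eta$-formula in a single computation, and you verify multiplicativity in the first variable by a direct $\Int(X_vX_{v'})$ calculation where the paper instead cites Lemma \ref{lem:cocycle}; you also spell out the final ``in particular'' deduction, which the paper leaves implicit.
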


\begin{proof}
We already know that $\beta$ takes values in $\{\pm 1\}$ on $K\times K$, so let $u\in T\setminus K$ and $t\in K$. 
Applying $\varphi_0$ to $X_uX_t=\beta(u,t)X_tX_u$, we get
$X_tX_u=X_uX_t\overline{\beta(u,t)}=\beta(u,t)X_uX_t$, so $\beta(u,t)^2=1.$
Hence, $\beta(u,t)=\beta(u,t)^{-1}$ and we get $\Int(X_t)(X_u)=\beta(u,t)X_u$, so Lemma \ref{lem:cocycle} implies 
that $\beta(\cdot,t)\in Z^1(T,\{\pm 1\})=\mathrm{Hom}(T,\{\pm 1\})$.

We have $X_uX_t=X_{ut}\lambda$ for some $\lambda\in\De^\times$. 
Squaring both sides and taking into account that $ut\in T\setminus K$, we get 
$X_u^2X_t^2\beta(u,t)=X_{ut}^2|\lambda|^2$, hence $\mu(u)\mu(t)\beta(u,t)=\mu(ut)$.
Finally, applying $\varphi_0$ to $X_uX_t=X_{ut}\lambda$, we get 
$X_tX_u\eta(u)\eta(t)=\bar{\lambda}X_{ut}\eta(ut)=X_{ut}\lambda\eta(ut)$, hence 
$\eta(ut)=\beta(u,t)\eta(u)\eta(t)=\beta(u,t)\eta(u)\delta$.
\end{proof}

We will now obtain more explicit formulas for the action of $G$ on the set of parameters $\ParSet(G,\cD,\varphi_0,\delta)$.
We emphasize that, in the next result, whenever $K\ne T$, $\mu$ and $\beta$ stand for the extensions of, respectively, 
the nice map $\mu:T\setminus K\to\{\pm 1\}$ and the bicharacter $\beta:K\times K\to\{\pm 1\}$, as defined above.

\begin{proposition}\label{prop:G-action_simplification_nonCcentral}
Suppose that $\cD$ is finite-dimensional and $Z(\cD)$ is $\RR$ or either $\CC$ or $\tilde{\CC}$ with nontrivial grading. 
Then the action of $G$ on $\ParSet(G,\cD,\varphi_0,\delta)$ is the following:
$g\cdot(g_0,\kappa,\TrSig)=(g^{-2}g_0,g\cdot\kappa,g\cdot\TrSig)$
where $(g\cdot\kappa)(x) = \kappa(g^{-1}x)$ and 
\[
(g\cdot\TrSig)(x) = \eta(u)\mu(u)\beta(u,t)\TrSig(g^{-1}x),
\]
with $ u := \xi( g^{-1} x )^{-1} g^{-1} \xi(x) \in T$ and $ t := g_0 \xi(g^{-1}x)^2 = g^{-2} g_0 \xi(x)^2$ 
(which is in $K$ or else $\TrSig( g^{-1} x )=0$).
Furthermore, if $Z(\cD)$ is not $\tilde{\CC}$ and $\varphi_0$ is the distinguished involution then
\[
(g\cdot\TrSig)(x) = \beta(u,t)\TrSig(g^{-1}x)
=\beta(\xi( g^{-1} x )^{-1} g^{-1} \xi(x),g^{-2} g_0 \xi(x)^2)\,\TrSig(g^{-1}x).
\]
\end{proposition}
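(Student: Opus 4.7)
My plan is to prove the proposition by directly unpacking Equation \eqref{eq:G-action}, which reads
\[
(g\cdot\TrSig)(x)=\eta(u)\,\mathrm{sign}\big(X_{tu^2}^{-1}X_uX_tX_u\big)\,\TrSig(g^{-1}x).
\]
The crucial simplification in the present setting is that $T$ is an elementary $2$-group, as established in the paragraph before the proposition (using Lemmas \ref{lem:suprad}, \ref{lem:normbeta} and \ref{lem:Xt2}). Consequently $u^2=e$, so $tu^2=t$ and $X_{tu^2}=X_t$; this kills one of the factors and reduces the task to evaluating $X_t^{-1}X_uX_tX_u$.

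Next I would apply the bicharacter identity $X_uX_t=\beta(u,t)X_tX_u$, where $\beta$ denotes the extension to $T\times K$ provided by Lemma \ref{lem:extended_beta} (needed precisely when $\De\cong\CC$ and $u\in T\setminus K$), together with the normalization $X_u^2=\mu(u)\in\{\pm 1\}$ fixed by Notation \ref{nota:scaling_Xt} and the conventions listed just before the proposition. A short commutation then yields $X_t^{-1}X_uX_tX_u=\beta(u,t)\mu(u)$, which is already $\pm 1$, so its sign equals itself. Substituting back into \eqref{eq:G-action} produces the first formula.

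For the distinguished-involution assertion, I would invoke the characterization $\eta=\mu$ on $T\setminus K$ (automatic on all of $T$ when $\De=\RR$ or $\De\cong\HH$, since then $K=T$), and then use the last sentence of Lemma \ref{lem:extended_beta} to extend the equality to $K$ in the case $\De\cong\CC$. This gives $\eta(u)\mu(u)=\mu(u)^2=1$ for every $u\in T$, so the first formula collapses to $(g\cdot\TrSig)(x)=\beta(u,t)\TrSig(g^{-1}x)$, as required.

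The only real obstacle is bookkeeping uniformly across the three possibilities $\De=\RR$, $\De\cong\HH$ and $\De\cong\CC$: one must check that the normalization $X_u^2=\mu(u)\in\{\pm 1\}$ is available for every $u\in T$ (not only for $u\in T\setminus K$), that $\beta(u,t)$ is defined whenever $u\in T$ and $t\in K$, and that $t\in K$ whenever $\TrSig(g^{-1}x)\ne 0$ so that $\beta(u,t)$ meaningfully enters the formula. These points are handled respectively by the conventions of Notation \ref{nota:scaling_Xt}, by Lemma \ref{lem:extended_beta}, and by condition (ii)--(ii$'$) of Definition \ref{def:SignFunct}; no genuinely new ideas are required beyond what has already been set up.
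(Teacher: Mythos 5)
Your proposal is correct and follows essentially the same route as the paper's proof: both reduce the sign factor in Equation \eqref{eq:G-action} to $\mathrm{sign}(X_t^{-1}X_uX_tX_u)=\beta(u,t)\mu(u)$ using that $T$ is an elementary $2$-group, the extended bicharacter of Lemma \ref{lem:extended_beta}, and the normalization $X_u^2=\mu(u)$, and then obtain the distinguished-involution case from $\eta=\mu$ on $T$. The only detail the paper adds that you omit is the observation that $\xi(g^{-1}x)^2=g^{-2}\xi(x)^2$ (since $\xi(g^{-1}x)g\xi(x)^{-1}\in T$ and $T$ is an elementary $2$-group), which justifies the two equal expressions for $t$ in the statement; this is routine bookkeeping.
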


\begin{proof}
Taking into account that $T$ is an elementary $2$-group and using Lemma \ref{lem:extended_beta}, 
we can compute the sign factor in Equation \eqref{eq:G-action} as follows:
\[
\mathrm{sign}(X_{tu^2}^{-1}X_uX_tX_u)=\mathrm{sign}(X_t^{-1}X_uX_tX_u)=\beta(u,t)\mathrm{sign}(X_u^2)=\beta(u,t)\mu(u).
\]
Note that $\xi(g^{-1}x)g\xi(x)^{-1}$ is always in $T$ and hence $\xi(g^{-1}x)^2=g^{-2}\xi(x)^2$.
The result follows.
\end{proof}

If $ Z(\cD) = \CC = \De$ (hence $K=T$), the sign factor in Equation \eqref{eq:G-action} will, in general, depend on the choice  
of the elements $X_t$, because if $tu^2\ne t$ then $X_t$ and $X_{tu^2}$ can be independently normalized with real scalars. 
(This problem does not arise if $T$ happens to be an elementary $2$-group.) 
It is shown in \cite[\S 11]{BRpr} that, if $\varphi_0$ is a distinguished involution, then the elements $X_t$ can be chosen in such a way
that, in addition to the condition $X_t^{o(t)}=1$, they also satisfy $X_uX_sX_u=X_{su^2}$ for all $s\in T^{[2]}$ and $u\in T$.
Moreover, the elements $X_s$ for $s\in T^{[2]}$ are uniquely determined, while the elements $X_t$ for $t\notin T^{[2]}$ are determined up to sign.
We will make the following choice. Fix a transversal for the subgroup $T^{[2]}$ in $T$ containing $e$ and let $\xi':T/T^{[2]}\to T$ be the 
corresponding section of the quotient map $\pi':T\to T/T^{[2]}$. Also pick a transversal for the subgroup $T_{[2]}$ in $T$ containing $e$ 
and let $\xi'':T^{[2]}\to T$ be the corresponding section of the epimorphism $T\to T^{[2]}$ given by $t\mapsto t^2$ 
(in other words, $\xi''(s)^2=s$ for all $s\in T^{[2]}$, and $\xi''(e)=e$). 
Make an arbitrary choice of $X_t$ (out of two options) for each $t\ne e$ in the transversal of $T^{[2]}$, and then define
\begin{equation}\label{def:basis_Ccentral}
X_t := \beta(\xi''(s),\xi'\pi'(t))\,X_{\xi'\pi'(t)}X_s \text{ where } s := t(\xi'\pi'(t))^{-1}.
\end{equation}
Note that $\xi'\pi'(t)$ belongs to the transversal of $T^{[2]}$ and $s$ belongs to $T^{[2]}$, so the elements $X_{\xi'\pi'(t)}$ and $X_s$ 
were defined earlier. The above formula is consistent with those definitions because, for $t$ in the transversal of $T^{[2]}$, we have $\xi'\pi'(t)=t$ 
and hence $s=e$, while for $t\in T^{[2]}$, we have $\xi'\pi'(t)=e$ and hence $s=t$.

\begin{lemma}\label{lem:basis_Ccentral}
The elements defined by Equation \eqref{def:basis_Ccentral} satisfy $\varphi_0(X_t)=X_t$ and $X_t^{o(t)}=1$ for all $t\in T$.
Moreover, $X_u X_s X_u = X_{su^2}$ for all $s\in T^{[2]}$ and $u\in T$.
\end{lemma}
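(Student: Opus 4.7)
The plan is to verify the three conclusions in turn, taking as inputs the pre-existing normalizations stated in the excerpt: $\varphi_0(X_r)=X_r$ with $X_r^{o(r)}=1$ for $r$ in the transversal $\Xi':=\xi'(T/T^{[2]})$, and (from the distinguished-involution theory cited just above) $\varphi_0(X_s)=X_s$, $X_s^{o(s)}=1$, and $X_uX_sX_u=X_{su^2}$ for all $u\in T$ and $s\in T^{[2]}$, which uniquely characterize the $X_s$ with $s\in T^{[2]}$. Throughout I use that $\beta$ is an alternating bicharacter with $|\beta|=1$ (Lemma \ref{lem:normbeta}) and that $\beta(\xi''(s)^2,r)=\beta(\xi''(s),r)^2$ since $\xi''(s)^2=s$.

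The first claim $\varphi_0(X_t)=X_t$ follows by applying $\varphi_0$ to Equation \eqref{def:basis_Ccentral}. Antimultiplicativity, $\CC$-antilinearity, and the pre-existing $\varphi_0$-invariance of $X_{\xi'\pi'(t)}$ and $X_s$ yield
\[
\varphi_0(X_t)=\overline{\beta(\xi''(s),\xi'\pi'(t))}\,X_sX_{\xi'\pi'(t)},
\]
and commuting $X_s$ past $X_{\xi'\pi'(t)}$ via Equation \eqref{def:beta} produces a factor $\beta(s,\xi'\pi'(t))=\beta(\xi''(s),\xi'\pi'(t))^2$. Combined with $\overline{\beta(\xi''(s),\xi'\pi'(t))}=\beta(\xi''(s),\xi'\pi'(t))^{-1}$ (from unimodularity), this reproduces the scalar in Equation \eqref{def:basis_Ccentral}.

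For the conjugation identity $X_uX_sX_u=X_{su^2}$ (with $s\in T^{[2]}$ and $u\in T$), I would write $u=rs''$ with $r:=\xi'\pi'(u)$ and $s'':=ur^{-1}\in T^{[2]}$, substitute the formula for $X_u$, and use the $\beta$-commutation relations to gather the two $X_r$'s together and the three $T^{[2]}$-factors together. Applying the pre-existing identity $X_{s''}X_sX_{s''}=X_{s(s'')^2}$ and $X_r^2=X_{r^2}$ (the $s=e$, $u=r$ case), then rewriting $X_{r^2}X_{s(s'')^2}$ in terms of $X_{su^2}$ by applying the pre-existing identity once more inside the subalgebra $\bigoplus_{s'\in T^{[2]}}\cD_{s'}$ (using $v:=\xi''(r^2)$ with $v^2=r^2$), reduces the whole expression to $\beta(s,w)\cdot X_{su^2}$, where $w:=r\xi''(r^2)^{-1}\in T_{[2]}$. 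Writing $s=t^2$ for some $t\in T$ (possible since $s\in T^{[2]}$) and using bimultiplicativity, $\beta(s,w)=\beta(t,w)^2=\beta(t,w^2)=\beta(t,e)=1$, so the identity holds.

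The order identity $X_t^{o(t)}=1$ (with $n:=o(t)$ and $t=rs$ as in Equation \eqref{def:basis_Ccentral}) follows similarly: induction on the $\beta$-commutation gives $X_t^n=\beta(\xi''(s),r)^{n^2}\,X_r^nX_s^n$, with $X_r^nX_s^n\in\cD_e=\CC$ because $(rs)^n=e$. Step 1 forces this product to be real, and iterating the conjugation identity just established (to slide $X_s$-powers past $X_r$-powers without residual signs) together with $X_r^{o(r)}=1$ and $X_s^{o(s)}=1$ pins down the scalar as $\beta(\xi''(s),r)^{-n^2}$, yielding $X_t^n=1$. The main obstacle throughout is the sign-tracking: the accumulated $\beta$-factors must cancel exactly, which is achieved by the combination of the squaring identity $\beta(\xi''(s),r)^2=\beta(s,r)$, the unimodularity of $\beta$, and the key vanishing $\beta(T^{[2]},T_{[2]})=1$ derived above.
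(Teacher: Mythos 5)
Your first claim is proved exactly as in the paper: apply $\varphi_0$ to Equation \eqref{def:basis_Ccentral} and cancel $\overline{\beta(\xi''(s),\xi'\pi'(t))}=\beta(\xi''(s),\xi'\pi'(t))^{-1}$ against the factor $\beta(s,\xi'\pi'(t))=\beta(\xi''(s),\xi'\pi'(t))^2$ coming from commuting $X_s$ past $X_{\xi'\pi'(t)}$. For the other two claims you take a genuinely different, computational route. The paper disposes of both almost without calculation: once $\varphi_0(X_t)=X_t$ and $|X_t^{o(t)}|=1$ are known, the newly defined elements agree up to sign with the normalized family of \cite[\S 11]{BRpr}; the identity $X_uX_sX_u=X_{su^2}$ is insensitive to the sign of $X_u$ while $X_s$ and $X_{su^2}$ are the uniquely determined elements, so it transfers verbatim, and $X_t^{o(t)}=1$ follows from the parity of $o(t)$ together with the definition of a distinguished involution (for even $o(t)$ the value $X_t^{o(t)}$ is the same for both sign choices, and odd $o(t)$ forces $t\in T^{[2]}$). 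Your direct $\beta$-bookkeeping is a legitimate alternative and in effect fills in the computation hiding behind the paper's ``it is clear that $|X_t^{o(t)}|=1$''; the price is that two steps need to be made watertight. First, your reduction of $X_{r^2}X_{s(s'')^2}$ to a multiple of $X_{su^2}$ invokes $X_v^2=X_{v^2}$ for $v=\xi''(r^2)$, an element that need not lie in $T^{[2]}$ nor in the chosen transversal, so quoting the identity for it is a priori circular; the fix is to record the consequence $X_aX_b=\beta(\xi''(a),b)X_{ab}$ for $a,b\in T^{[2]}$, which involves only the uniquely determined elements (the auxiliary element of degree $\xi''(a)$ enters only squared) and is therefore inherited from the family of \cite{BRpr}. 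Second, the residual scalar in the conjugation identity works out to $\beta\bigl(s(s'')^2,w\bigr)$ rather than $\beta(s,w)$, but your vanishing $\beta(T^{[2]},T_{[2]})=1$ kills it all the same; likewise, in the order computation the factors $\beta(\xi''(s),r)^{n^2}$ and $X_r^nX_s^n$ are each separately equal to $1$ when $n=o(t)$ is even (using $r^n=s^{-n}\in T^{[2]}$), which is what your ``pins down the scalar'' amounts to. With these details made explicit the argument is complete.
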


\begin{proof}
Using the fact $s=\xi''(s)^2$, we obtain:
\[
\begin{split}
\varphi_0(X_t) &= \overline{\beta(\xi''(s),\xi'\pi'(t)}\,X_s X_{\xi'\pi'(t)}\\
&=\beta(\xi''(s),\xi'\pi'(t)^{-1}\beta(s,\xi'\pi'(t))\,X_{\xi'\pi'(t)}X_s=X_t.
\end{split}
\]  
It is clear that $|X_t^{o(t)}|=1$, and the fact that $\varphi_0(X_t)=X_t$ implies $X_t^{o(t)}\in\{\pm 1\}$.
If $o(t)$ is odd then $t\in T^{[2]}$ and hence $X_t^{o(t)}=1$. If $o(t)$ is even then $X_t^{o(t)}=1$ because $\varphi_0$
is a distinguished involution. This completes the proof of the first assertion.
The second assertion follows by the choice of $X_s$ for $s\in T^{[2]}$.
\end{proof}

\begin{proposition}\label{prop:G-action_simplification_Ccentral}
Suppose that $\cD$ is finite-dimensional and $Z(\cD)$ is $\CC$ with trivial grading. 
Assume that $\varphi_0$ is a distinguished involution and the elements $X_t$ are defined by Equation \eqref{def:basis_Ccentral}.
Then the action of $G$ on $\ParSet(G,\cD,\varphi_0,1)$ is the following:
$g\cdot(g_0,\kappa,\TrSig)=(g^{-2}g_0,g\cdot\kappa,g\cdot\TrSig)$
where $(g\cdot\kappa)(x) = \kappa(g^{-1}x)$ and 
\[
(g\cdot\TrSig)(x) =
\beta(\xi''(su^2)^{-1}\xi''(s)u,\xi'\pi'(t))\,
\TrSig(g^{-1}x),
\]
with $ t := g_0 \xi(g^{-1}x)^2 $ (which is in $T$ or else $\TrSig( g^{-1} x )=0$),
$ u := \xi( g^{-1} x )^{-1} g^{-1} \xi(x) \in T $, and $s := t(\xi'\pi'(t))^{-1} \in T^{[2]}$.
\end{proposition}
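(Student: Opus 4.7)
The plan is to apply Equation \eqref{eq:G-action} in the current setting and evaluate the sign factor explicitly using the basis from Equation \eqref{def:basis_Ccentral} together with the identities of Lemma \ref{lem:basis_Ccentral}. Since $\varphi_0$ is distinguished we have $\varphi_0(X_t) = X_t$ for all $t \in T$, so $\eta \equiv 1$; and $K = T$ in the current situation. Equation \eqref{eq:G-action} therefore reduces to
\[
(g\cdot\TrSig)(x) = \mathrm{sign}\bigl(X_{tu^2}^{-1} X_u X_t X_u\bigr)\, \TrSig(g^{-1}x),
\]
and it will suffice to show that $X_{tu^2}^{-1} X_u X_t X_u = \beta(\xi''(su^2)^{-1}\xi''(s)u, \xi'\pi'(t))$ and that this value lies in $\{\pm 1\}$, so the outer $\mathrm{sign}$ becomes redundant.

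Setting $s = t(\xi'\pi'(t))^{-1} \in T^{[2]}$ and noting that $u^2 \in T^{[2]}$ gives $\pi'(tu^2) = \pi'(t)$, hence $\xi'\pi'(tu^2) = \xi'\pi'(t)$ and $tu^2(\xi'\pi'(t))^{-1} = su^2$. Applying Equation \eqref{def:basis_Ccentral} to both $t$ and $tu^2$ yields
\begin{align*}
X_t &= \beta(\xi''(s),\xi'\pi'(t))\, X_{\xi'\pi'(t)} X_s,\\
X_{tu^2} &= \beta(\xi''(su^2),\xi'\pi'(t))\, X_{\xi'\pi'(t)} X_{su^2},
\end{align*}
and the second assertion of Lemma \ref{lem:basis_Ccentral} gives $X_{su^2} = X_u X_s X_u$. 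Commuting $X_u$ past $X_{\xi'\pi'(t)}$ via the defining relation of $\beta$ then gives
\[
X_u X_t X_u = \beta(\xi''(s),\xi'\pi'(t))\, \beta(u,\xi'\pi'(t))\, X_{\xi'\pi'(t)} X_u X_s X_u,
\]
so combining the two expressions cancels the whole factor $X_{\xi'\pi'(t)} X_u X_s X_u$ and leaves
\[
X_{tu^2}^{-1} X_u X_t X_u = \beta(\xi''(su^2),\xi'\pi'(t))^{-1}\, \beta(\xi''(s),\xi'\pi'(t))\, \beta(u,\xi'\pi'(t)),
\]
which by multiplicativity of $\beta$ in the first variable (an immediate consequence of Equation \eqref{def:beta}) equals $\beta(\xi''(su^2)^{-1}\xi''(s)u, \xi'\pi'(t))$.

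It remains to check that this value lies in $\{\pm 1\}$. Squaring the first argument in the abelian group $T$ and using $\xi''(\cdot)^2$ equals the identity on $T^{[2]}$ yields $(su^2)^{-1} s u^2 = e$, so $\xi''(su^2)^{-1}\xi''(s)u$ belongs to $T_{[2]}$; then $\beta(a,\xi'\pi'(t))^2 = \beta(a^2,\xi'\pi'(t)) = 1$ for any $a \in T_{[2]}$, giving the claim. The only delicate point in the whole argument is that the cancellation between $X_{tu^2}$ and $X_u X_t X_u$ relies crucially on using the precise normalization from Equation \eqref{def:basis_Ccentral} together with the identity $X_u X_s X_u = X_{su^2}$ of Lemma \ref{lem:basis_Ccentral}; with an arbitrary choice of representatives $X_t$ the scalar would depend on additional ambiguous signs that need not simplify to a single $\beta$-value.
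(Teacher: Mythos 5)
Your proof is correct and follows essentially the same route as the paper's: substitute the normalizations from Equation \eqref{def:basis_Ccentral} for $X_t$ and $X_{tu^2}$, commute $X_u$ past $X_{\xi'\pi'(t)}$ to pick up $\beta(u,\xi'\pi'(t))$, cancel via $X_uX_sX_u=X_{su^2}$ from Lemma \ref{lem:basis_Ccentral}, and note that the first argument of the resulting $\beta$-value lies in $T_{[2]}$ so the sign is redundant. No gaps.
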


\begin{proof}

Since $\pi'(tu^2)=\pi'(t)$, we have $tu^2(\xi'\pi'(tu^2))^{-1}=su^2$, and hence
\[
\begin{split}
& X_{tu^2}^{-1}X_uX_tX_u\\ 
&= (\beta(\xi''(su^2),\xi'\pi'(t))\,X_{\xi'\pi'(t)}X_{su^2})^{-1}X_u(\beta(\xi''(s),\xi'\pi'(t))\,X_{\xi'\pi'(t)}X_s)X_u\\
&= \beta(\xi''(su^2)^{-1}\xi''(s),\xi'\pi'(t))\, X_{su^2}^{-1} X_{\xi'\pi'(t)}^{-1} X_u X_{\xi'\pi'(t)} X_s X_u\\
&= \beta(\xi''(su^2)^{-1}\xi''(s)u,\xi'\pi'(t))\, X_{su^2}^{-1} X_{\xi'\pi'(t)}^{-1} X_{\xi'\pi'(t)} X_u X_s X_u\\
&= \beta(\xi''(su^2)^{-1}\xi''(s)u,\xi'\pi'(t)),
\end{split}
\]
in view of Lemma \ref{lem:basis_Ccentral}. Note that $\xi''(su^2)^{-1}\xi''(s)u\in T_{[2]}$ and hence 
\[
\beta(\xi''(su^2)^{-1}\xi''(s)u,\xi'\pi'(t))\in\{\pm 1\}.
\]
It remains to apply Equation \eqref{eq:G-action} and the fact $\eta(u)=1$.
\end{proof}

\begin{corollary}
If $T$ is an elementary $2$-group then $(g\cdot\TrSig)(x) = \beta(u,t)\,\TrSig(g^{-1}x)$.
If $|T|$ is odd then $(g\cdot\TrSig)(x) = \TrSig(g^{-1}x)$.
\end{corollary}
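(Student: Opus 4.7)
The plan is to reduce the general formula in Proposition \ref{prop:G-action_simplification_Ccentral},
\[
(g\cdot\TrSig)(x) = \beta\bigl(\xi''(su^2)^{-1}\xi''(s)u,\,\xi'\pi'(t)\bigr)\,\TrSig(g^{-1}x),
\]
by substituting the structural information about $T$ in each case, using only the definitions of the transversals $\xi'$ and $\xi''$ for $T^{[2]}$ in $T$ and for $T_{[2]}$ in $T$, respectively.

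For the first case, assume that $T$ is an elementary $2$-group. Then squaring is identically $e$, so $T^{[2]} = \{e\}$; hence the quotient $T/T^{[2]}$ is canonically identified with $T$, and since the transversals contain $e$, we must have $\xi'\pi' = \mathrm{id}_T$ and $\xi''(e) = e$. In particular $s = t(\xi'\pi'(t))^{-1} = e$ and $u^2 = e$, so $\xi''(s) = \xi''(su^2) = e$ and the first argument of $\beta$ collapses to $u$, while the second argument is $t$. This gives $(g\cdot\TrSig)(x) = \beta(u,t)\,\TrSig(g^{-1}x)$, as claimed.

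For the second case, assume that $|T|$ is odd. Then the squaring map $T\to T$ is a bijection (as its kernel $T_{[2]}$ is trivial by Lagrange's theorem), so $T^{[2]} = T$ and the quotient $T/T^{[2]}$ is trivial. Since the transversal contains $e$, the section $\xi'$ sends the unique class to $e$, so $\xi'\pi'(t) = e$ for all $t\in T$. The second argument of $\beta$ is therefore $e$, and using that $\beta(\,\cdot\,,e) = 1$ we conclude that $(g\cdot\TrSig)(x) = \TrSig(g^{-1}x)$.

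No substantive difficulty is anticipated: the entire content is the bookkeeping of the definitions of $\xi'$ and $\xi''$ in the two degenerate situations $T^{[2]} = \{e\}$ and $T^{[2]} = T$. The only point worth checking carefully is that the argument $\xi''(su^2)^{-1}\xi''(s)u$ of $\beta$ lands in $T_{[2]}$ (so the value of $\beta$ is well-defined through the extended bicharacter of Lemma \ref{lem:extended_beta}), but this is already guaranteed by Proposition \ref{prop:G-action_simplification_Ccentral} itself and plays no role beyond ensuring the formula makes sense in each case.
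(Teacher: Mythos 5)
Your proof is correct and follows essentially the same route as the paper's: in the elementary $2$-group case $T^{[2]}=\{e\}$ forces $\xi'\pi'(t)=t$ (and the first argument of $\beta$ collapses to $u$), while for $|T|$ odd $T^{[2]}=T$ forces $\xi'\pi'(t)=e$, so the $\beta$-factor is trivial. The paper states these two observations without spelling out the intermediate computations you include, but the argument is the same.
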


\begin{proof}
If $T$ is an elementary $2$-group then $T^{[2]}=\{e\}$ and hence $\xi'\pi'(t)=t$ for all $t\in T$. 
If $|T|$ is odd then $T^{[2]}=T$ and hence $\xi'\pi'(t)=e$ for all $t\in T$.
\end{proof}

\section{Classical central simple real Lie algebras}\label{se:Lie}

It is well known that a simple real Lie algebra is either a real form of a simple complex Lie algebra 
or a simple complex Lie algebra regarded as real. 
In the finite-dimensional case, the former algebras have centroid $\RR$ (that is, they are central)
and the latter have centroid $\CC$. 
Here we are interested in the real forms of classical simple complex Lie algebras. 
As recalled in the introduction, every such Lie algebra $\cL$ can be realized as $\mathrm{Skew}(\cR,\varphi)'$ 
where $(\cR,\varphi)$ is a semisimple finite-dimensional associative algebra with involution such that $\mathrm{Sym}(Z(\cR),\varphi)=\RR$,
that is, $(\cR,\varphi)$ is central simple as an algebra with involution.

Consider the affine group schemes $\mathbf{Aut}(\cL)$ and $\mathbf{Aut}(\cR,\varphi)$. The restriction map gives a homomorphism 
$\theta:\mathbf{Aut}(\cR,\varphi)\to\mathbf{Aut}(\cL)$, which is actually an isomorphism except when $\cL$ has type $A_1$ or $D_4$.
Indeed, since we are in characteristic $0$, it is sufficient to consider the homomorphism of the groups of points in the algebraic closure of the ground field,
namely, $\theta_\CC:\mathrm{Aut}_\CC(\cR\otimes_\RR\CC,\varphi\otimes\mathrm{id})\to \mathrm{Aut}_\CC(\cL\otimes_\RR \CC)$,
which is well known to be an isomorphism except for the said types. (It fails to be injective for type $A_1$ and surjective for type $D_4$.)

It follows (see e.g. Theorems 1.38 and 1.39 and also Remark 1.40 in \cite{EK13}) that the restriction gives a bijection between the abelian group gradings 
on $(\cR,\varphi)$ and on $\cL$, which induces a bijection between the isomorphism classes of $G$-gradings on $(\cR,\varphi)$ and on $\cL$, 
as well as between the equivalence classes of fine gradings.

We will not consider type $D_4$ in this paper. As to type $A_1$, we will use the alternative models: $\cL=\cR'$ where $\cR$ is $M_2(\RR)$ or $\HH$,
which give the two real forms of $\mathfrak{sl}_2(\CC)$, namely, $\mathfrak{sl}_2(\RR)$ and $\mathfrak{sl}_1(\HH)\cong\mathfrak{su}_2$.
There are analogous models for some real forms of type $A_r$ with $r>1$: $\cL=\cR'$ where $\cR$ is $M_n(\RR)$ with $n=r+1$ or $M_n(\HH)$ with $2n=r+1$.
These models give the real forms $\mathfrak{sl}_n(\RR)$ and $\mathfrak{sl}_n(\HH)$, respectively, and, 
since the restriction map $\mathbf{Aut}(\cR)\to\mathbf{Aut}(\cL)$ is a closed embedding with image $\mathbf{Int}(\cL)$, 
they will help us deal with the inner gradings on these real forms.
A $G$-grading on $\cL$ is said to be \emph{inner} if the image of the corresponding homomorphism $G^D\to\mathbf{Aut}(\cL)$ (see e.g. \cite[\S 1.4]{EK13})
is contained in $\mathbf{Int}(\cL)$; otherwise the grading is called \emph{outer}. In other words, the grading is inner if an only if the action of 
the character group $\widehat{G} := \mathrm{Hom}(G,\CC^\times)$ on the complexification $\cL_\CC := \cL\otimes_\RR \CC$ is by inner automorphisms.
(The action of the group $\widehat{G}$ on a $G$-graded complex algebra $\mathcal{A}=\bigoplus_{g\in G}\mathcal{A}_g$ is defined by  
$\chi\cdot a=\chi(g)a$ for all $\chi\in\widehat{G}$, $a\in\mathcal{A}_g$ and $g\in G$.) 
Therefore, if $\cL=\cR'$ as above, the inner $G$-gradings on $\cL$ are precisely the restrictions of the $G$-gradings on the associative algebra $\cR$. 
Of course, all gradings are inner if $\cL$ is of type $A_1$. 

In the models $\cL=\mathrm{Skew}(\cR,\varphi)'$ for type $A_r$ with $r>1$, 
the inner (respectively, outer) gradings on $\cL$ are the restrictions of the Type~I (respectively, Type~II) gradings on $(\cR,\varphi)$.
Note that $\cR$ is a graded-simple algebra in all cases except for the inner gradings on $\mathfrak{sl}_n(\RR)$ and $\mathfrak{sl}_n(\HH)$, 
which correspond to $(\cR,\varphi)$ isomorphic $M_n(\RR)\times M_n(\RR)$ and $M_n(\HH)\times M_n(\HH)$, respectively, with  
$\varphi:(X,Y)\mapsto (Y^*,X^*)$, where $X^* := \overline{X}^T$. In these latter cases, we are going to use the alternative models.

\subsection{The global signature}

In Subsection \ref{sse:consequences}, we classified up to isomorphism the finite-dimensional $G$-graded real associative algebras with involution 
$(\cR,\varphi)$ that are graded-simple and central simple as algebras with involution (disregarding the grading). 
This gives a classification up to isomorphism of the $G$-graded classical central simple real Lie algebras, except those mentioned above, 
by restricting the gradings from $\cR$ to $\cL=\mathrm{Skew}(\cR,\varphi)'$. However, we still have to identify the isomorphism class of $\cL$ 
in terms of our classification parameters. 

Recall the graded algebra with involution $(\cR,\varphi)=M(\cD,\varphi_0,g_0,\kappa,\TrSig,\delta)$ from Definition \ref{def:Mdata} 
and assume that $(\cD,\varphi_0)$ is central simple as an algebra with involution.
If we disregard the grading, then $(\cR,\varphi)$ is isomorphic to $M_k(\cD)$ with 
$\varphi(X)=\Phi^{-1}\varphi_0(X^T)\Phi$, where $k=|\kappa|$ and $\Phi$ is given by Equation \eqref{eq:Phi}. 
Hence, the type of $\cL$ is easy to identify: it is determined by the isomorphism class of $\cD$ as an ungraded algebra and, in the case $Z(\cR)=\RR$, 
also by $\delta$ and the type of $\varphi_0$ (orthogonal or symplectic). The isomorphism class of $\cD$ also gives us information about 
which real form of a given type we obtain, but in some cases when $\cD$ is simple, that is, $M_\ell(\Delta)$ with $\Delta\in\{\RR,\CC,\HH\}$, 
additional information is required to pin down the isomorphism class of $\cL$, namely, the signature of $\varphi$ as an involution on
$\cR\cong M_{k\ell}(\Delta)$. To be precise, if $\varphi$ is given in matrix form by $Z\mapsto\Psi^{-1} Z^* \Psi$ where 
$\Psi^*=\Psi$ (that is, $\Psi$ is a hermitian matrix, which is determined up to a real scalar) then the signature of 
$\varphi$ is defined as the absolute value of the signature of $\Psi$. We will refer to this parameter as the \emph{global signature}, 
and we will compute it now in terms of our signature function $\TrSig$ assuming that $\varphi_0$ is a distinguished involution
(see Subsection \ref{sse:consequences}).

We know from \cite[\S 11]{BRpr} that the involution $\varphi_0$ on $\cD$ 
is orthogonal if $\Delta=\RR$ and symplectic if $\Delta=\HH$. 
Hence, if we identify $\cD$ with $M_{\ell}(\Delta)$, then $ \varphi_0(Y) = \Lambda^{-1} Y^* \Lambda $
for all $ Y \in M_{\ell}(\Delta) $ where $ \Lambda \in M_{\ell}(\Delta) $ is hermitian.
The same is true in the case $\Delta=\CC$ because $\varphi_0$ is of the second kind.
Using the Kronecker product to identify
$M_k(\cD)$ with $ M_k(\RR) \otimes_{\RR} M_{\ell}(\Delta) $,
we obtain:
\[
\varphi( X \otimes Y )=\Psi^{-1}(X^T\otimes Y^*)\Psi
\]
for all $ X \in M_k(\RR) $ and $ Y \in M_{\ell}(\Delta) $,
where $\Psi=\Psi_1\oplus\dots\oplus\Psi_{m+r}$ with the blocks given by 
$\Psi_i = S_i\otimes\Lambda X_{t_i}$ unless $ \De \cong \HH $ and $ \eta(t_i) = -\delta $, 
and in this latter case 
$\Psi_i= I_{k_i} \otimes \Lambda\mathbf{i} X_{t_i}$ (recall that $\mathbf{i}$ and $X_{t_i}$ commute).

Note that $ S_i^T = \delta \eta(t_i) S_i $
and $ ( \Lambda X_{t_i} )^* 
= X_{t_i}^* \Lambda
= \eta(t_i) \Lambda X_{t_i} $,
hence a block $ S_i \otimes \Lambda X_{t_i} \in M_{ k \ell }(\Delta) $ 
is hermitian if and only if $ \delta = 1 $.
In the case $ \De \cong \HH $ and $ \eta(t_i) = -\delta $,
we have $ ( \Lambda \mathbf{i} X_{t_i} )^*
= \delta \Lambda \mathbf{i} X_{t_i} $,
because $ - \eta(t_i) \mathbf{i} X_{t_i}
= \varphi_0( \mathbf{i} X_{t_i} )
= \Lambda^{-1} (\mathbf{i} X_{t_i})^* \Lambda $.
Hence, the global signature is defined if and only if $\delta=1$.
So let us assume that $ \delta = 1 $ and compute the signature of the matrix $\Psi$ 
as the sum of the signatures of the blocks $\Psi_i$.

If $S_i$ is skew-symmetric,
then the bilinear form that it defines
has a totally isotropic subspace of maximal dimension,
so the same happens to the hermitian form defined by $ \Psi_i = S_i \otimes \Lambda X_{t_i} $,
which means that the signature is $0$.
If $S_i$ is symmetric, or if $ \De \cong \HH $ and $ \eta(t_i) = -1 $, then the first factor of $\Psi_i$ is real symmetric 
and the second is hermitian, hence the signature of $\Psi_i$ is the product of their signatures.
In this case, note that
the signature of the involution on $M_{\ell}(\Delta)$ given by 
$ Y \mapsto X_{t_i}^{-1} \Lambda^{-1} Y^* \Lambda X_{t_i} $
(respectively $ Y \mapsto X_{t_i}^{-1} \mathbf{i}^{-1} \Lambda^{-1}Y^* \Lambda \mathbf{i} X_{t_i} $)
is the absolute value of
the signature of $ \Lambda X_{t_i} $
(respectively $ \Lambda \mathbf{i} X_{t_i} $).
The signs depend on the choice of the elements $X_t$.

Assume now that we are not in the case $ \Delta = \CC = \De $.
If $ \De \cong \HH $ and $ \eta(t_i) = -1 $,
we know from \cite[\S 11]{BRpr} that the signature of
the involution on $M_{\ell}(\Delta)$ given by 
$Y \mapsto X_{t_i}^{-1} \mathbf{i}^{-1}\varphi_0(Y) \mathbf{i} X_{t_i} $ is $0$.
Hence we only have to consider the blocks of the form
$ S_i \otimes \Lambda X_{t_i} $.
We know, again from \cite[\S 11]{BRpr}, that the signature of
the involution on $M_{\ell}(\Delta)$
given by $Y \mapsto X_{t_i}^{-1} \varphi_0(Y) X_{t_i} $
is $\ell$ if $ t_i = e $,
and $0$ otherwise.
In particular, the signs mentioned above are the same for all terms that give nonzero contribution 
to the sum.
Therefore, the signature of $\Psi$ equals $ \pm \sum_{ t_i = e } ( p_i - q_i )\ell $ 
and hence the global signature is given by the following formula:
\begin{equation}\label{eq:global_signature_nonCcentral}
\mathrm{signature}(\varphi) 
= \ell\:\Big| \sum_{\substack{ x \in (G/T)_{g_0} \\ \tau(x) = e }}\TrSig(x) \Big|.
\end{equation}

In the case $ \Delta = \CC = \De $ (hence $ K = T $ and $ \eta = 1 $), 
we know from \cite[\S 11]{BRpr} that the distinguished involutions on $\cD$ are
those that have nonzero signature, namely,
$ \sqrt{ \vert T_{[2]} \vert } $.
Moreover, the signature of
the (second kind) involution on $M_{\ell}(\CC)$
given by $Y \mapsto X_t^{-1} \varphi_0(Y) X_t $ is
$ \sqrt{ \vert T_{[2]} \vert } $ if $ t \in T^{[2]} $,
and $0$ otherwise.
\emph{We choose $X_t$ for $t\in T^{[2]}$ in such a way 
that $X_uX_tX_u\in\RR_{>0}X_{tu^2}$ for all $u\in T$.}
It is shown in \cite[\S 11]{BRpr} that the same elements $X_t$, $t\in T^{[2]}$, 
have the property that $\mathrm{signature}(\Lambda X_t)=\mathrm{signature}(\Lambda)$. 
It follows that the signature of $\Psi$ equals
$\pm \sum_{ t_i \in T^{[2]} } ( p_i - q_i )
\sqrt{ \vert T_{[2]} \vert } $
and hence the global signature is given
by the following formula:
\begin{equation}\label{eq:global_signature_Ccentral}
\mathrm{signature}(\varphi) 
= \sqrt{ \vert T_{[2]} \vert } \; \Big|
\sum_{\substack{ x \in (G/T)_{g_0} \\ \tau(x) \in T^{[2]} }}
\TrSig(x)
\Big|.
\end{equation}

\subsection{Notation}\label{sse:notation}

Whenever possible (that is, except in the case $Z(\cD)=\tilde{\CC}$), we will use distinguished involutions $\varphi_0$. 
In our notation $M(\cD,\varphi_0, g_0,\kappa,\TrSig,\delta)$, 
we will substitute for $\cD$ the parameters that determine its isomorphism class 
(hence also the isomorphism class of $(\cD,\varphi_0)$ if $\varphi_0$ is distinguished) 
and omit the parameters that are clear from the context:
\begin{enumerate}
\item[(1)] If $Z(\cD)=\RR$ then we will write $M(\Delta_0,T,\mu,g_0,\kappa,\TrSig,\delta)$ 
where $\cD$ has the identity component $\Delta_0\in\{\RR,\CC,\HH\}$ and is determined by $(T,\mu)$, and $\varphi_0$ is the distinguished involution.
\item[(2)] If $Z(\cD)=\CC$, nontrivially graded, then we will write $M(\Delta_0,T,\mu,g_0,\kappa,\TrSig)$ 
where $\cD$ has the identity component $\Delta_0\in\{\RR,\CC,\HH\}$ and is determined by $(T,\mu)$, $\varphi_0$ is the distinguished involution, and $\delta=1$.
\item[(3)] If $Z(\cD)=\CC$, trivially graded, then we will write $M(T,\beta,g_0,\kappa,\TrSig)$ 
where $\cD$ has the identity component $\CC$ and is determined by $(T,\beta)$, 
$\varphi_0$ is a (fixed) distinguished involution, and $\delta=1$.
\item[(4)] If $Z(\cD)=\tilde{\CC}$ then we will write $M(\Delta_0,T,\mu,\eta,g_0,\kappa,\TrSig)$ 
where $\cD$ has the identity component $\Delta_0\in\{\RR,\CC,\HH\}$ and is determined by $(T,\mu)$, $\varphi_0$ is determined by $\eta$, and $\delta=1$.
\end{enumerate}
Recall that, except in Case (3), $T$ is an elementary $2$-group and $\mu$ is either a quadratic form $T\to\{\pm 1\}$ 
with polar form $\beta:T\times T\to\{\pm 1\}$ (for $\Delta_0\in\{\RR,\HH\}$) or a nice map $T\setminus K\to\{\pm 1\}$, 
whose associated quadratic forms $\mu_u:K\to\{\pm 1\}$, with $u\in T\setminus K$,
have the same polar form $\beta:K\times K\to\{\pm 1\}$ (for $\Delta_0=\CC$). 
The radical of $\beta$ is the support of the grading on $Z(\cD)$, which is 
the trivial subgroup $\{e\}$ in Cases (1) and (3) and the subgroup $\{e,f\}\subseteq K$ in Cases (2) and (4), 
where $f$ is the distinguished element. Note that, if $\mu$ is a quadratic form (respectively, nice map) then 
$\mu(f)=-1$ (respectively, $\mu_u(f)=-1$ for any $u\in T\setminus K$) in Case~(2) 
and $\mu(f)=+1$ (respectively, $\mu_u(f)=+1$ for any $u\in T\setminus K$) in Case~(4).

The isomorphism class of $\cD$ as an ungraded algebra is determined as follows, using the notation $\Arf(\mu)$ 
for the value $\epsilon\in\{\pm 1\}$ that $\mu$ takes more often, that is, $\Arf(\mu)=\epsilon$ if and only if 
$\vert\mu^{-1}(\epsilon)\vert > \vert\mu^{-1}(-\epsilon)\vert$. 
If $\mu$ takes values $-1$ and $+1$ equally often then $\Arf(\mu)$ is undefined.
It is defined in Cases (1) and (4) and boils down to the classical Arf invariant as follows. 
In Case (1), if $\mu$ is a quadratic form (respectively, nice map) then $\Arf(\mu)=(-1)^\alpha$ (respectively, $\Arf(\mu)=\mu(u)(-1)^\alpha$) 
where $\alpha$ is the Arf invariant in the field of order $2$ of the quadratic form $\mu$ (respectively, $\mu_u$).
In Case (4), the only difference is that we have to consider the quadratic forms induced from $\mu$ (respectively, $\mu_u$) modulo 
the radical $\{e,f\}$.
\begin{enumerate}
\item[(1)] If $Z(\cD)=\RR$ then $\cD\cong M_\ell(\Delta)$ where 
\begin{itemize}
\item for $\Delta_0\in\{\RR,\CC\}$: if $\Arf(\mu)=+1$ then $\Delta=\RR$ and $\ell^2=\vert T\vert\dim\Delta_0$, 
and if $\Arf(\mu)=-1$ then $\Delta=\HH$ and $\ell^2=\frac{1}{4}\vert T\vert\dim\Delta_0$;
\item for $\Delta_0=\HH$: if $\Arf(\mu)=+1$ then $\Delta=\HH$ and $\ell^2=\frac{1}{4}\vert T\vert\dim\Delta_0=\vert T\vert$, 
and if $\Arf(\mu)=-1$ then $\Delta=\RR$ and $\ell^2=\vert T\vert\dim\Delta_0=4\vert T\vert$.
\end{itemize}
\item[(2)] If $Z(\cD)=\CC$, nontrivially graded, then $\cD\cong M_\ell(\CC)$ where 
$\ell^2=\frac{1}{2}\vert T\vert\dim\Delta_0$.
\item[(3)] If $Z(\cD)=\CC$, trivially graded, then $\cD\cong M_\ell(\CC)$ where 
$\ell^2=\vert T\vert$.
\item[(4)] If $Z(\cD)=\tilde{\CC}$ then $\cD\cong M_\ell(\Delta)\times M_\ell(\Delta)$ where
\begin{itemize}
\item for $\Delta_0\in\{\RR,\CC\}$: if $\Arf(\mu)=+1$ then $\Delta=\RR$ and $\ell^2=\frac{1}{2}\vert T\vert\dim\Delta_0$, 
and if $\Arf(\mu)=-1$ then $\Delta=\HH$ and $\ell^2=\frac{1}{8}\vert T\vert\dim\Delta_0$;
\item for $\Delta_0=\HH$: if $\Arf(\mu)=+1$ then $\Delta=\HH$ and $\ell^2=\frac{1}{8}\vert T\vert\dim\Delta_0=\frac{1}{2}\vert T\vert$, 
and if $\Arf(\mu)=-1$ then $\Delta=\RR$ and $\ell^2=\frac{1}{2}\vert T\vert\dim\Delta_0=2\vert T\vert$.
\end{itemize} 
\end{enumerate}

Recall that $\kappa$ belongs to the set $\MulSet(G,\cD,\varphi_0,g_0,\delta)$ of admissible multiplicity functions 
$G/T\to\ZZ_{\ge 0}$ as in Definition \ref{def:MultFunct}, where we substitute 
$\De\cong\CC$ in Case (3) and $\De\cong\Delta_0$ in all other cases, $\delta=1$ except in Case (1), 
and also $\eta=1$ in Case (3) and $\eta=\mu$ in Cases (1) and (2). 
Note that, in Case (3), we have $K=T$ and the definition boils down to the following: $|\kappa|<\infty$ and 
$\kappa(g_0^{-1}x^{-1})=\kappa(x)$ for all $x\in G/T$. 
Also, $\TrSig$ belongs to the set $\SigSet(G,\cD,\varphi_0,g_0,\kappa,\delta)$ of signature functions as in Definition \ref{def:SignFunct}, 
where we make the same substitutions.

We will also need the graded matrix algebra $M_k(\cD)$ without involution, where $\cD$ is as in Case (1) and the 
grading is determined according to Equation \eqref{eq:MatrGrad} by an arbitrary multiplicity function 
$\kappa:G/T\to\ZZ_{\ge 0}$ satisfying $|\kappa|=k$. We will denote this graded algebra by $M(\Delta_0,T,\mu,\kappa)$.

The group $G$ acts on multiplicity functions in the natural way: $(g\cdot \kappa)(x) := \kappa(g^{-1}x)$ for all $x\in G/T$.
It also acts on the triples $(g_0,\kappa,\TrSig)\in\ParSet(G,\cD,\varphi_0,\delta)$ as described
by Proposition~\ref{prop:G-action_simplification_nonCcentral} in Cases (1), (2) and (4), and
by Proposition~\ref{prop:G-action_simplification_Ccentral} in Case (3).
Since the triples belonging to the same $G$-orbit produce isomorphic graded algebras with involution and 
since the action of $g\in G$ replaces $g_0$ by $g^{-2}g_0$, we can fix a transversal $\Theta$ for the subgroup $G^{[2]}$ in $G$
and insist that $g_0\in\Theta$. 

For a given $g_0$, its stabilizer $G_{[2]}$ acts on the set of pairs $(\kappa,\TrSig)$ where
$\kappa\in\MulSet(G,\cD,\varphi_0,g_0,\delta)$ and $\sigma\in\SigSet(G,\cD,\varphi_0,g_0,\kappa,\delta)$. 
Taking into account our substitutions, we will denote this set of pairs by $\ParSet(G,\Delta_0,T,\mu,g_0,\delta)$ in Case (1),
$\ParSet(G,\Delta_0,\allowbreak T,\allowbreak \mu,g_0)$ in Case (2), $\ParSet(G,T,g_0)$ in Case (3), and $\ParSet(G,\Delta_0,T,\eta,g_0)$ in Case (4).
The action of $G_{[2]}$ on this set is given by $g\cdot(\kappa,\TrSig)=(g\cdot\kappa,g\cdot\TrSig)$ where $g\cdot\kappa$ is as above,
but $g\cdot\TrSig$ is more complicated: the absolute value $|\TrSig|:G/T\to\ZZ_{\ge 0}$ is transformed in the same way as $\kappa$,
but there are sign changes given by Propositions \ref{prop:G-action_simplification_nonCcentral} and \ref{prop:G-action_simplification_Ccentral}.
For example, in Cases (1) and (2), the first of these propositions gives
\[
(g\cdot\TrSig)(x)=\beta(\xi( g^{-1} x )^{-1} g^{-1} \xi(x), \tau(x))\,\TrSig(g^{-1}x)
\]
for all $g\in G_{[2]}$ and $x\in(G/T)_{g_0}$. 
(Note that the set $(G/T)_{g_0}$ and the function $\tau:(G/T)_{g_0}\to T$ are invariant under $G_{[2]}$ 
and, by definition, $\TrSig(x)=0$ unless $x\in (G/T)_{g_0}$.)
Finally, the group $\{\pm 1\}$ acts on the pairs $(\kappa,\TrSig)$ by 
$\epsilon\cdot(\kappa,\TrSig) := (\kappa,\epsilon\TrSig)$ for $\epsilon\in\{\pm 1\}$.

\subsection{Series A: inner gradings on special linear Lie algebras}

Let $\cR=M(\Delta_0,T,\mu,\kappa)$ as defined above. The $G$-grading of $\cR$ restricts to an inner grading on 
its Lie subalgebra $\cR'$, which will be denoted by $\Gamma_{\mathfrak{sl}}^{\mathrm{(I)}}(\Delta_0,T,\mu,\kappa)$.
Fixing an isomorphism $\cD\cong M_\ell(\Delta)$, $\Delta\in\{\RR,\HH\}$, we may identify $\cR$ with $M_n(\Delta)$,
where $n=|\kappa|\ell$, and hence regard $\Gamma_{\mathfrak{sl}}^{\mathrm{(I)}}(\Delta_0,T,\mu,\kappa)$ as 
a grading on $\mathfrak{sl}_n(\Delta)$. Note that this identification depends on the choice 
of the isomorphism $\cD\cong M_\ell(\Delta)$, but the isomorphism class of 
the grading on $\mathfrak{sl}_n(\Delta)$ does not.

\begin{theorem}\label{th:AI_RH}
Let $\cL$ be one of the real special linear Lie algebras of type $A_r$, namely, 
$\mathfrak{sl}_{r+1}(\RR)$ or $\mathfrak{sl}_{(r+1)/2}(\HH)$ (if $r$ is odd). 
Then any inner $G$-grading on $\cL$ is isomorphic to $\Gamma_{\mathfrak{sl}}^{\mathrm{(I)}}(\Delta_0,T,\mu,\kappa)$ where 
$r=\vert\kappa\vert \sqrt{\vert T\vert \dim\Delta_0}-1$ and, 
in the first case, $\Arf(\mu)=1$ if $\Delta_0\in\{\RR,\CC\}$ and $\Arf(\mu)=-1$ if $\Delta_0=\HH$,
while in the second case, $\Arf(\mu)=-1$ if $\Delta_0\in\{\RR,\CC\}$ and $\Arf(\mu)=1$ if $\Delta_0=\HH$.
Moreover, two such gradings, $\Gamma_{\mathfrak{sl}}^{\mathrm{(I)}}(\Delta_0,T,\mu,\kappa)$
and $\Gamma_{\mathfrak{sl}}^{\mathrm{(I)}}(\Delta_0',T',\mu',\kappa')$,
are isomorphic if and only if
$ \Delta_0 = \Delta_0' $, $ T = T'$,
$ \mu = \mu' $, and $\kappa'$ is in the union of the $G$-orbits of $\kappa$ and $\bar{\kappa}$,
where $\bar{\kappa}(x):=\kappa(x^{-1})$ for all $x\in G/T$.
\end{theorem}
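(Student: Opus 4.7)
The plan is to reduce the classification of inner $G$-gradings on $\cL$ to that of $G$-gradings on the associative algebra $\cR = M_n(\Delta)$, and then invoke the graded-module machinery of Section~\ref{se:assoc_description} together with the classification of real graded division algebras from \cite{R16} and \cite{BRpr}. As noted at the start of Section~\ref{se:Lie}, the restriction map $\mathbf{Aut}(\cR)\to\mathbf{Aut}(\cL)$ is a closed embedding with image $\mathbf{Int}(\cL)$, so the inner $G$-gradings on $\cL$ are in bijection (via restriction) with the $G$-gradings on $\cR$. Two such inner gradings on $\cL$ are isomorphic as graded Lie algebras if and only if the corresponding gradings on $\cR$ are related either by an automorphism of $\cR$ (lifting an inner automorphism of $\cL$) or, when $r\ge 2$, by an anti-automorphism of $\cR$ (whose composition with $-\mathrm{id}$ on $\cL$ yields the outer automorphism $X\mapsto -X^T$).

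For existence, I would apply \cite[Theorem 2.6]{EK13}: every $G$-grading on the graded-simple algebra $\cR$ has the form $\End_{\cD}(\cV)$, where $\cD$ is a real graded division algebra with $\De\cong\Delta_0\in\{\RR,\CC,\HH\}$ and $\cV$ is a graded right $\cD$-module of finite $\cD$-dimension, determined by $\kappa:G/T\to\ZZ_{\ge 0}$ up to shift. The finite-dimensional $\cD$ with $Z(\cD)=\RR$ are classified in \cite{R16} and \cite{BRpr} by triples $(\Delta_0,T,\mu)$, where $T$ is an elementary $2$-group and $\mu$ is a quadratic form (for $\Delta_0\in\{\RR,\HH\}$) or a nice map (for $\Delta_0\cong\CC$ with $K\ne T$). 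The Arf-invariant constraints in the theorem statement then come from matching the ungraded isomorphism type $\cD\cong M_\ell(\Delta)$ to the desired $\Delta\in\{\RR,\HH\}$ via the dimension formulas of Subsection~\ref{sse:notation}, which in both cases of the theorem yield the uniform formula $r=|\kappa|\sqrt{|T|\dim\Delta_0}-1$.

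For the isomorphism criterion, I would invoke \cite[Theorem 2.10]{EK13}: a graded isomorphism $\End_\cD(\cV)\to\End_{\cD'}(\cV')$ corresponds to an isomorphism $\psi_0:\cD\to\cD'$ of graded algebras together with a $\psi_0$-semilinear isomorphism $\cV^{[g]}\to\cV'$ of graded modules for some $g\in G$, which immediately forces $(\Delta_0,T,\mu)=(\Delta_0',T',\mu')$ and $\kappa'=g\cdot\kappa$. For anti-isomorphisms, I would use the distinguished involution on $\cD$ from \cite{BRpr} to realize $\cD\cong\cD^{op}$ as graded algebras; a graded anti-isomorphism of $\cR$ then descends, modulo inner automorphisms, to an isomorphism of $\End_\cD(\cV^*)$ onto $\End_\cD(\cV')$, where $\cV^*$ is the graded $\cD$-dual of $\cV$, whose multiplicity function is $\bar\kappa(x):=\kappa(x^{-1})$. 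This gives the stated additional equivalence $\kappa\leftrightarrow\bar\kappa$.

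The main obstacle is the exceptional case $r=1$, where every automorphism of $\cL$ is inner and so anti-automorphisms of $\cR$ produce no new equivalences, yet the theorem is stated uniformly with the $\kappa\mapsto\bar\kappa$ identification. Here I would verify directly that every admissible $\kappa$ with $|\kappa|\ell=2$ has $\bar\kappa$ already in its own $G$-orbit: in the subcase $|\kappa|=2,\ \ell=1$ a short combinatorial check on $\kappa=\delta_g+\delta_h$ (or $2\delta_g$) suffices, while in the subcase $|\kappa|=1,\ \ell=2$ one chooses $g\in G$ with $gT=x^{-2}T$ to send $\delta_x$ to $\delta_{x^{-1}}$. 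A secondary technical point is keeping careful track of the grading shift on $\cV^*$ under dualization to confirm the formula $\bar\kappa(x)=\kappa(x^{-1})$ and thereby close the loop between the anti-automorphism side and the combinatorial datum in the theorem.
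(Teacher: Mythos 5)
Your proposal is correct and follows essentially the same route as the paper: extend the inner grading to $\cR=M_n(\Delta)$, classify gradings on $\cR$ via $\End_\cD(\cV)$ and the data $(\Delta_0,T,\mu,\kappa)$, and observe that two gradings on $\cL=\cR'$ are isomorphic exactly when the graded algebras $\cR$, $\cR'$ are isomorphic or anti-isomorphic, with the anti-isomorphism case accounted for by the dual module and $\bar{\kappa}$. Your ``main obstacle'' at $r=1$ is not actually an obstacle: a graded anti-isomorphism $\phi\colon\cR\to\cR'$ yields the graded Lie isomorphism $-\phi|_{\cL}$ for every $r$ (the restriction $r\ge 2$ is only needed for the converse direction, where it is harmless since a Lie isomorphism then lifts to an algebra isomorphism), so your direct verification that $\bar{\kappa}\in G\cdot\kappa$ when $|\kappa|\ell=2$ is a correct but unnecessary consistency check.
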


\begin{proof}
Any inner grading on $\cL$ uniquely extends to $M_n(\Delta)$ where 
$n=r+1$ if $\Delta=\RR$ and $2n=r+1$ if $\Delta=\HH$.
The resulting graded algebra is isomorphic to some $\cR=M(\Delta_0,T,\mu,\kappa)$ where the parameters
must satisfy the indicated conditions. Finally, two gradings on $\cL$ are isomorphic if and only if 
the corresponding graded algebras $\cR$ and $\cR'$ are either isomorphic or anti-isomorphic.
\end{proof}

\subsection{Series A: inner gradings on special unitary Lie algebras}

Let $(\cR,\varphi)=M(T,\beta,g_0,\kappa,\TrSig)$ as defined in Subsection \ref{sse:notation}, so 
$\cR\cong M_n(\CC)$ as an ungraded algebra, $n=|\kappa|\ell$, and $\varphi$ is of the second kind. 
The restriction of the $G$-grading of $\cR$ to its Lie subalgebra $\mathrm{Skew}(\cR,\varphi)'$ is 
inner and will be denoted by $\Gamma_{\mathfrak{su}}^{\mathrm{(I)}}(T,\beta,g_0,\kappa,\TrSig)$. 

\begin{theorem}\label{th:AI_C}
Let $\cL$ be one of the special unitary Lie algebras of type $A_r$ for $r\ge 2$, namely,
$\mathfrak{su}(p,q)$ where $p+q = r+1$, $p\ge q$. 
Then any inner $G$-grading on $\cL$ is isomorphic to $\Gamma_{\mathfrak{su}}^{\mathrm{(I)}}(T,\beta,g_0,\kappa,\TrSig)$ 
where $g_0\in\Theta$, $r=\vert\kappa\vert \sqrt{\vert T\vert}-1$, and
$p-q$ equals the right-hand side of Equation \eqref{eq:global_signature_Ccentral}. 
Moreover, two such gradings, $\Gamma_{\mathfrak{su}}^{\mathrm{(I)}}(T,\beta,g_0,\kappa,\TrSig)$
and $\Gamma_{\mathfrak{su}}^{\mathrm{(I)}}(T',\beta',g_0',\kappa',\TrSig')$,
are isomorphic if and only if
$ T = T'$, $ \beta = \beta' $, $ g_0 = g_0' $, and 
\begin{itemize}
\item when $T$ is not an elementary $2$-group: $(\kappa',\TrSig')$ is in the $G_{[2]}\times\{\pm 1\}$-orbit of
$(\kappa,\TrSig)$ in the set $\ParSet(G,T,g_0)$;
\item when $T$ is an elementary $2$-group: $(\kappa',\TrSig')$ is in the union of the $G_{[2]}\times\{\pm 1\}$-orbits 
of $(\kappa,\TrSig)$ and $(\kappa,\nu\TrSig)$,
where $\nu:T\to\{\pm 1\}$ is a quadratic form with polar form $\beta$ and 
$(\nu\TrSig)(x) := \nu(\tau(x))\TrSig(x)$ for all $x\in G/T$.
\end{itemize}
\end{theorem}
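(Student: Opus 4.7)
The plan is to transfer the classification from $\cL$ to an associative graded algebra with involution $(\cR,\varphi)$ and then apply Corollary~\ref{cor:3}. Since $r\ge 2$, the restriction map $\theta:\mathbf{Aut}(\cR,\varphi)\to\mathbf{Aut}(\cL)$ is an isomorphism of affine group schemes (see the opening of Section~\ref{se:Lie}), so $G$-gradings on $(\cR,\varphi)$ correspond bijectively to $G$-gradings on $\cL$ and this correspondence preserves isomorphism classes. Here $\cR\cong M_n(\CC)$ with $n=r+1$ and $\varphi$ is of the second kind. An inner grading on $\cL$ corresponds to a Type~I grading on $(\cR,\varphi)$: the image of the character group in $\mathbf{Aut}(\cR,\varphi)$ fixes $Z(\cR)$ pointwise precisely when it induces inner automorphisms on $\cL_{\CC}:=\cL\otimes_{\RR}\CC$.

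Applying Theorem~\ref{th:struct}, we write $(\cR,\varphi)\cong M(\cD,\varphi_0,g_0,\kappa,\TrSig,\delta)$. Since $Z(\cR)=Z(\cD)=\CC$ is trivially graded, we are in Case~(3) of Subsection~\ref{sse:notation}; choosing $\varphi_0$ to be a fixed distinguished involution gives $\delta=1$ and reduces the data to $(T,\beta,g_0,\kappa,\TrSig)$. Because the $G$-action sends $g_0\mapsto g^{-2}g_0$, we may further assume $g_0\in\Theta$, realizing the grading as $\Gamma_{\mathfrak{su}}^{\mathrm{(I)}}(T,\beta,g_0,\kappa,\TrSig)$. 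The identity $n=|\kappa|\ell$ with $\ell=\sqrt{|T|}$ (Case~(3)) yields $r=|\kappa|\sqrt{|T|}-1$. Since $\cL=\mathrm{Skew}(\cR,\varphi)'\cong\mathfrak{su}(p,q)$, where $(p,q)$ is the inertia of the hermitian matrix representing $\varphi$, and $|p-q|$ coincides with the global signature computed by Equation~\eqref{eq:global_signature_Ccentral}, the convention $p\ge q$ pins down $p-q$ as stated.

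For the isomorphism criterion, the bijection $\theta$ reduces the problem to deciding when $M(T,\beta,g_0,\kappa,\TrSig)$ and $M(T',\beta',g_0',\kappa',\TrSig')$ are isomorphic as graded algebras with involution. The classification of graded-division algebras with $\De=Z(\cD)=\CC$ trivially graded by the pair $(T,\beta)$ (see e.g.\ \cite[Theorem~2.15]{EK13}) forces $T=T'$ and $\beta=\beta'$. Corollary~\ref{cor:3} then requires $(g_0,\kappa,\TrSig)$ and $(g_0',\kappa',\TrSig')$ to lie in the same $G\times\{\pm 1\}$-orbit, augmented in the elementary $2$-group case by the $\nu$-twist. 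The requirement $g_0,g_0'\in\Theta$ forces $g_0=g_0'$ and reduces the $G$-action to that of the stabilizer $G_{[2]}$, which is the formulation in the theorem.

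The main technical point will be to check that the global signature condition is well defined on isomorphism classes, i.e.\ that the right-hand side of~\eqref{eq:global_signature_Ccentral} is invariant under the $G_{[2]}\times\{\pm 1\}$-action and, when $T$ is an elementary $2$-group, under the $\nu$-twist. Invariance under $\{\pm 1\}$ is absorbed by the absolute value; invariance under $G_{[2]}$ follows from Proposition~\ref{prop:G-action_simplification_Ccentral} together with the observation that $x\mapsto g^{-1}x$ preserves the subset $\{x\in(G/T)_{g_0}\mid \tau(x)\in T^{[2]}\}$ when $g^2=e$; invariance under the $\nu$-twist follows because $\nu$ takes the same value on $T^{[2]}$. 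Everything else is routine bookkeeping using Sections~\ref{se:assoc_description} and~\ref{se:assoc_classification}.
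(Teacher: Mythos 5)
Your proposal is correct and follows essentially the same route as the paper: extend the inner grading to a Type~I grading on $(M_{r+1}(\CC),\varphi)$, apply Theorem~\ref{th:struct} to land in Case~(3) of Subsection~\ref{sse:notation} with a distinguished $\varphi_0$ and $\delta=1$, read off $r$ and $p-q$ from $\ell=\sqrt{|T|}$ and Equation~\eqref{eq:global_signature_Ccentral}, and settle the isomorphism question via Corollary~\ref{cor:3} after normalizing $g_0\in\Theta$ to replace the $G$-action by its stabilizer $G_{[2]}$. The ``technical point'' you flag at the end is in fact automatic, since the global signature is an invariant of the ungraded algebra with involution and hence constant on isomorphism classes, but your sketch of the direct verification is also sound.
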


\begin{proof}
Any inner grading on $\cL$ uniquely extends to a grading on $M_n(\CC)$ compatible with the (second kind) 
involution defining $\cL$ and giving the center $\CC$ the trivial grading. 
By Theorem \ref{th:struct}, the resulting graded algebra with involution must be isomorphic to some 
$M(T,\beta,g_0,\kappa,\TrSig)$, since we are in Case (3) of Subsection \ref{sse:notation}.
This proves the first assertion. The second assertion follows from Corollary \ref{cor:3}. 
\end{proof}

\subsection{Series A: outer gradings on special linear Lie algebras}

Let $(\cR,\varphi)=M(\Delta_0,T,\mu,\eta,g_0,\kappa,\TrSig)$ as defined in Subsection \ref{sse:notation}, so 
$\cR\cong M_n(\Delta)\times M_n(\Delta)$ as an ungraded algebra, $\Delta\in\{\RR,\HH\}$, $n=|\kappa|\ell$, 
and $\varphi$ is of the second kind. Recall that we may use an arbitrary second kind involution $\varphi_0$ on $\cD$.
\emph{For each $(\Delta_0,T,\mu)$, we fix a quadratic form (respectively, nice map) $\eta$ if $\Delta_0\in\{\RR,\HH\}$
(respectively, if $\Delta_0=\CC$) such that $\eta(f)=-1$ (respectively, $\eta_u(f)=-1$).}
The restriction of the $G$-grading of $\cR$ to its Lie subalgebra 
$\mathrm{Skew}(\cR,\varphi)'\cong\mathfrak{sl}_n(\Delta)$ is 
outer and will be denoted by $\Gamma_{\mathfrak{sl}}^{\mathrm{(II)}}(\Delta_0,T,\mu,\eta,g_0,\kappa,\TrSig)$. 

\begin{theorem}\label{th:AII_RH}
Let $\cL$ be one of the real special linear Lie algebras of type $A_r$ for $r\ge 2$, namely, 
$\mathfrak{sl}_{r+1}(\RR)$ or $\mathfrak{sl}_{(r+1)/2}(\HH)$ (if $r$ is odd). 
Then any outer $G$-grading on $\cL$ is isomorphic to 
$\Gamma_{\mathfrak{sl}}^{\mathrm{(II)}}(\Delta_0,T,\mu,\eta,g_0,\kappa,\TrSig)$ where 
$g_0\in\Theta$, $r=\vert\kappa\vert \sqrt{\frac{1}{2}\vert T\vert \dim\Delta_0}-1$ and, 
in the first case, $\Arf(\mu)=1$ if $\Delta_0\in\{\RR,\CC\}$ and $\Arf(\mu)=-1$ if $\Delta_0=\HH$,
while in the second case, $\Arf(\mu)=-1$ if $\Delta_0\in\{\RR,\CC\}$ and $\Arf(\mu)=1$ if $\Delta_0=\HH$.
Moreover, two such gradings, $\Gamma_{\mathfrak{sl}}^{\mathrm{(II)}}(\Delta_0,T,\mu,\eta,g_0,\kappa,\TrSig)$
and $\Gamma_{\mathfrak{sl}}^{\mathrm{(II)}}(\Delta_0',T',\mu',\eta',g_0',\kappa',\TrSig')$,
are isomorphic if and only if
$ \Delta_0 = \Delta_0' $, $ T = T'$,
$ \mu = \mu' $ (hence $\eta=\eta'$), $g_0=g_0'$, and
$(\kappa',\TrSig')$ is in the union of the $G_{[2]}\times\{\pm 1\}$-orbits 
of $(\kappa,\TrSig)$ and $(\kappa,\nu\TrSig)$ in the set $\ParSet(G,\Delta_0,T,\eta,g_0)$,
where $\nu:K\to\{\pm 1\}$ is a homomorphism satisfying $\nu(f)=-1$ and 
$(\nu\TrSig)(x) := \nu(\tau(x))\TrSig(x)$ for all $x\in G/T$.
\end{theorem}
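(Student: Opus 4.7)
The plan is to follow the same strategy used for Theorems \ref{th:AI_RH} and \ref{th:AI_C}, reducing the classification of outer $G$-gradings on $\cL$ to the classification of Type II $G$-gradings on $(\cR,\varphi)$ via the isomorphism $\mathbf{Aut}(\cR,\varphi)\to\mathbf{Aut}(\cL)$ (which is valid since $r\ge 2$ and we are avoiding $D_4$). First, I would observe that an outer grading on $\cL=\mathrm{Skew}(\cR,\varphi)'$ for $\cR=M_n(\Delta)\times M_n(\Delta)$, $\Delta\in\{\RR,\HH\}$, with $\varphi$ swapping and conjugating the factors, corresponds exactly to a $G$-grading on $(\cR,\varphi)$ in which the center $Z(\cR)=\tilde{\CC}$ is nontrivially graded. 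Then Theorem \ref{th:struct} places us in Case (4) of Subsection \ref{sse:notation}: such a grading is isomorphic to $M(\cD,\varphi_0,g_0,\kappa,\TrSig,1)$ with $\cD\cong M_\ell(\Delta)\times M_\ell(\Delta)$, and $\varphi_0$ of the second kind, which by our conventions means $\varphi_0$ is determined by some $\eta$ with $\eta(f)=-1$ (so that the distinguished element $f$ acts nontrivially on $Z(\cD)$).

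Next, I would match the discrete invariants. The size of $\cR$ as an ungraded algebra is $n=|\kappa|\ell$, and the table in Subsection \ref{sse:notation} for Case (4) expresses $\ell^2$ as $\tfrac{1}{2}|T|\dim\Delta_0$ or $\tfrac{1}{8}|T|\dim\Delta_0$ according to whether $\Arf(\mu)$ is $+1$ or $-1$. Combining with the real form ($\Delta=\RR$ for $\mathfrak{sl}_{r+1}(\RR)$, $\Delta=\HH$ for $\mathfrak{sl}_{(r+1)/2}(\HH)$) yields the stated formula $r=|\kappa|\sqrt{\tfrac{1}{2}|T|\dim\Delta_0}-1$ together with the $\Arf(\mu)$ constraints (being careful that for $\Delta_0=\HH$ the roles of $+1$ and $-1$ are swapped, because an Arf value $-1$ twists the ungraded type between $M_\ell(\RR)\times M_\ell(\RR)$ and $M_\ell(\HH)\times M_\ell(\HH)$). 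Existence of a (quadratic form or nice map) $\eta$ with $\eta(f)=-1$ holds because modifying any $\eta$ by a character of $T$ (respectively $K$) yields another with the same polar form, and the values at $f$ cover $\{\pm 1\}$ since $f$ is nontrivial.

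For the isomorphism criterion, I would appeal directly to Corollary \ref{cor:2}: since $Z(\cD)=\tilde{\CC}$ is nontrivially graded and $\CC\cong\De\not\subseteq Z(\cD)$, we are outside the $\CC$-central case, and Corollary \ref{cor:2} applies without additional hypotheses. It gives that two such models are isomorphic iff $(g_0',\kappa',\TrSig')$ lies in the $G\times\{\pm 1\}$-orbit of $(g_0,\kappa,\TrSig)$ or of $(g_0,\kappa,\nu\TrSig)$, where $\nu\in\mathrm{Hom}(K,\{\pm 1\})$ satisfies $\nu(f)=-1$. To put this in the form stated in the theorem, I would use the fact that the $G$-action sends $g_0\mapsto g^{-2}g_0$, so fixing $g_0\in\Theta$ selects a unique representative in each $G^{[2]}$-coset, reducing the residual $G$-action to that of the stabilizer $G_{[2]}$. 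The action of $G_{[2]}$ on $(\kappa,\TrSig)$ is precisely what is described by Proposition \ref{prop:G-action_simplification_nonCcentral}. Unlike Theorem \ref{th:AI_RH}, there is no extra $\bar\kappa$-equivalence here because the involution is built into the datum and no further anti-automorphism of $\cR$ needs to be considered.

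The main obstacle is the bookkeeping around the equivalence class of the involution $\varphi_0$. In Case (4) we cannot use a distinguished involution (it would be of the first kind), so we must fix a representative involution (encoded by the choice of $\eta$ with $\eta(f)=-1$) uniformly across $(\Delta_0,T,\mu)$ and verify that, for this fixed choice, Lemma \ref{lem:equiv_of_involutions} together with Skolem--Noether gives a single equivalence class of second-kind involutions. A secondary subtlety is confirming that the $\Arf(\mu)$ values stated correctly pick out the real form: this follows from the classification of ungraded real central simple algebras with involution of the second kind, combined with the tables in Subsection \ref{sse:notation}, and must be checked in each of the subcases $\Delta_0\in\{\RR,\CC,\HH\}$.
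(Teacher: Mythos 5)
Your proposal is correct and follows essentially the same route as the paper: extend the outer grading to a Type II grading on $M_n(\Delta)\times M_n(\Delta)$ compatible with the second-kind involution, invoke Theorem \ref{th:struct} to land in Case (4) of Subsection \ref{sse:notation}, read off the numerical constraints from the tables there, and apply Corollary \ref{cor:2} (with the reduction of the $G$-action to $G_{[2]}$ via $g_0\in\Theta$) for the isomorphism criterion. The only small imprecision is your phrase ``$\CC\cong\De\not\subseteq Z(\cD)$'': the reason we are outside the $\CC$-central case is simply that $Z(\cD)=\tilde{\CC}\ne\CC$, regardless of what $\De$ is.
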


\begin{proof}
Any outer grading on $\cL$ uniquely extends to a grading on $M_n(\Delta)\times M_n(\Delta)$ 
compatible with the (second kind) involution defining $\cL$ and giving the center $\tilde{\CC}$ a nontrivial grading. 
By Theorem \ref{th:struct}, the resulting graded algebra with involution must be isomorphic to some 
$M(\Delta_0,T,\mu,\eta,g_0,\kappa,\TrSig)$, since we are in Case (4) of Subsection \ref{sse:notation}.
This proves the first assertion. The second assertion follows from Corollary \ref{cor:2}. 
\end{proof}

\subsection{Series A: outer gradings on special unitary Lie algebras}

Let $(\cR,\varphi)=M(\Delta_0,T,\mu,g_0,\kappa,\TrSig)$ as defined in Subsection \ref{sse:notation}, so 
$\cR\cong M_n(\CC)$ as an ungraded algebra, $n=|\kappa|\ell$, 
and $\varphi$ is of the second kind.
The restriction of the $G$-grading of $\cR$ to its Lie subalgebra $\mathrm{Skew}(\cR,\varphi)'$ is 
outer and will be denoted by $\Gamma_{\mathfrak{su}}^{\mathrm{(II)}}
( \Delta_0 , \allowbreak T , \allowbreak \mu, \allowbreak
g_0 , \allowbreak \kappa , \allowbreak \TrSig)$.

The proof of the next result is analogous to Theorem \ref{th:AII_RH}, with Case (2) instead of Case (4).

\begin{theorem}\label{th:AII_C}
Let $\cL$ be one of the special unitary Lie algebras of type $A_r$ for $r\ge 2$, namely,
$\mathfrak{su}(p,q)$ where $p+q = r+1$, $p\ge q$. 
Then any outer $G$-grading on $\cL$ is isomorphic to 
$\Gamma_{\mathfrak{su}}^{\mathrm{(II)}}(\Delta_0,T,\mu,g_0,\kappa,\TrSig)$ 
where $g_0\in\Theta$, $r=\vert\kappa\vert \sqrt{\frac{1}{2}\vert T\vert\dim\Delta_0}-1$, and
$p-q$ equals the right-hand side
of Equation \eqref{eq:global_signature_nonCcentral}. 
Moreover, two such gradings, $\Gamma_{\mathfrak{su}}^{\mathrm{(II)}}(\Delta_0,T,\mu,g_0,\kappa,\TrSig)$
and $\Gamma_{\mathfrak{sl}}^{\mathrm{(II)}}(\Delta_0',T',\mu',g_0',\kappa',\TrSig')$,
are isomorphic if and only if
$ \Delta_0 = \Delta_0' $, $ T = T'$,
$ \mu = \mu' $, $ g_0 = g_0' $, and
$(\kappa',\TrSig')$ is in the union of the $G_{[2]}\times\{\pm 1\}$-orbits 
of $(\kappa,\TrSig)$ and $(\kappa,\nu\TrSig)$ in the set $\ParSet(G,\Delta_0,T,\mu,g_0)$,
where $\nu:K\to\{\pm 1\}$ is a homomorphism satisfying $\nu(f)=-1$ and 
$(\nu\TrSig)(x) := \nu(\tau(x))\TrSig(x)$ for all $x\in G/T$.
\qed
\end{theorem}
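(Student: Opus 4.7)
The plan is to mirror the argument sketched for Theorem \ref{th:AII_RH}, substituting Case (2) of Subsection \ref{sse:notation} for Case (4). Since $r \geq 2$, the restriction homomorphism $\theta:\mathbf{Aut}(\cR,\varphi)\to\mathbf{Aut}(\cL)$ is an isomorphism of affine group schemes, so any $G$-grading on $\cL=\mathrm{Skew}(\cR,\varphi)'$ extends uniquely to a $G$-grading on $(\cR,\varphi)$. Here $\cR\cong M_n(\CC)$ has center $Z(\cR)\cong\CC$, and a grading on $\cL$ is outer precisely when the action of $\widehat{G}$ on $\cL_\CC$ involves automorphisms outside $\mathbf{Int}(\cL)$; this translates exactly to the condition that the induced grading on $Z(\cR)$ is nontrivial. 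Together with the fact that $\varphi$ is of the second kind, this places $(\cR,\varphi)$ in Case (2).

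Next I would invoke Theorem \ref{th:struct} to obtain $(\cR,\varphi)\cong M(\cD,\varphi_0,g_0,\kappa,\TrSig,1)$, and then replace $\varphi_0$ by the distinguished involution (available since $Z(\cD)=\CC$ is nontrivially graded) to rewrite the isomorphism class as $M(\Delta_0,T,\mu,g_0,\kappa,\TrSig)$ with $g_0\in\Theta$. The equation $n=|\kappa|\ell$ combined with the formula $\ell^2=\tfrac{1}{2}|T|\dim\Delta_0$ recorded for Case (2) yields the stated value of $r$. The invariant $p-q$ is, by definition of $\mathfrak{su}(p,q)$, the global signature of the hermitian form defining $\varphi$, which was computed in Equation \eqref{eq:global_signature_nonCcentral} (applicable in Cases (1), (2), (4)); this gives the signature condition.

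For the isomorphism criterion, I would use that $\cL\cong\cL'$ as $G$-graded Lie algebras if and only if the uniquely determined lifts $(\cR,\varphi)$ and $(\cR',\varphi')$ are isomorphic as $G$-graded algebras with involution. Corollary \ref{cor:2} applies directly to Case (2) and states that this holds iff $(g_0',\kappa',\TrSig')$ lies in the same $G\times\{\pm 1\}$-orbit as $(g_0,\kappa,\TrSig)$ or as $(g_0,\kappa,\nu\TrSig)$, for any fixed $\nu\in\mathrm{Hom}(K,\{\pm 1\})$ with $\nu(f)=-1$. Because $g\in G$ sends $g_0$ to $g^{-2}g_0$ and because $\Theta$ is a transversal for $G^{[2]}$, the condition $g_0=g_0'$ is forced, and the residual $G$-action reduces to the action of the stabilizer $G_{[2]}$ on the set $\ParSet(G,\Delta_0,T,\mu,g_0)$ of pairs $(\kappa,\TrSig)$, which is precisely what the theorem asserts.

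The only delicate point in this plan is verifying that the two equivalent-data clauses assembled together genuinely exhaust the image of the full $G\times(C\rtimes A)$-action: one must check that the $\nu$-action in Corollary \ref{cor:2} translates, under the identification $\TrSig=\InSig\xi$ and the reductions of Remark \ref{rem:reduction_of_action}, to the concrete formula $(\nu\TrSig)(x)=\nu(\tau(x))\TrSig(x)$. This is essentially bookkeeping: since $\InSig(h)$ is supported on those $h$ with $g_0h^2\in K$ and $\tau(x)=g_0\xi(x)^2$, the formula for the $A$-action on $\InParSet$ specializes correctly. With this verification in hand, the statement follows immediately from Corollary \ref{cor:2} exactly as the proof of Theorem \ref{th:AII_RH} followed from Corollary \ref{cor:2} applied in Case (4).
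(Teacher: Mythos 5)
Your proposal is correct and follows essentially the same route as the paper, which proves this theorem by the argument given for Theorem \ref{th:AII_RH} with Case (2) of Subsection \ref{sse:notation} in place of Case (4): extend the outer grading uniquely to $(\cR,\varphi)$ with $Z(\cR)=\CC$ nontrivially graded, apply Theorem \ref{th:struct} to land in Case (2), read off $r$ from $n=|\kappa|\ell$ with $\ell^2=\tfrac12|T|\dim\Delta_0$ and $p-q$ from Equation \eqref{eq:global_signature_nonCcentral}, and deduce the isomorphism criterion from Corollary \ref{cor:2} together with the reduction to $G_{[2]}$ via the transversal $\Theta$. The only slip is the parenthetical claim that Equation \eqref{eq:global_signature_nonCcentral} applies in Case (4) — there $\cD$ is not simple and no global signature is defined — but this is immaterial here since only Case (2) is used.
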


\subsection{Series B}

For series $B$, $C$ and $D$, we deal with central simple associative algebras over $\RR$, so we are in Case (1) 
of Subsection \ref{sse:notation}. The special feature of series $B$ is that the degree of this algebra is odd, 
so it must be $M_n(\RR)$ with odd $n$. This simplifies the situation dramatically. 

Let $(\cR,\varphi)=M(\Delta_0,T,\mu,g_0,\kappa,\TrSig,\delta)$ such that $\cR\cong M_n(\RR)$ as an ungraded algebra  
and $n=|\kappa|\ell$ is odd. We must have $\Delta_0=\RR$ and, since $T$ is an elementary $2$-group, 
we must also have $T=\{e\}$. The involution is orthogonal, so $\delta=1$. 
Moreover, since $|\kappa|$ is odd and $\kappa:G\to\ZZ_{\ge 0}$ is an admissible multiplicity function, 
part (b) of Definition \ref{def:MultFunct} implies that $g_0g^2=e$ for some $g\in G$. 
Therefore, we can make $g_0=e$ using the action of $G$.
Note that the definition of signature function $\TrSig:G\to\ZZ$ becomes: $\TrSig(x)=0$ for all $x\notin G_{[2]}$,
while $|\TrSig(x)|\le\kappa(x)$ and $\TrSig(x)\equiv\kappa(x)\pmod{2}$ for all $x\in G_{[2]}$. 
The action of $G_{[2]}$ on signature functions is just
$(g\cdot\TrSig)(x)=\TrSig(g^{-1}x)$ for all $x\in G$.

The restriction of the $G$-grading of $M(\RR,\{e\},1,e,\kappa,\TrSig,1)$, with odd $|\kappa|$, to its Lie subalgebra 
$\mathrm{Skew}(\cR,\varphi)$ will be denoted by $\Gamma_B(\kappa,\TrSig)$.

\begin{theorem}\label{th:B}
Let $\cL$ be one of the real forms of type $B_r$ for $r\ge 2$, namely, $\mathfrak{so}_{p,q}(\RR)$ 
where $p+q = 2r+1$, $p\ge q$.
Then any $G$-grading on $\cL$ is isomorphic to
$\Gamma_B(\kappa,\TrSig)$
where $ r = \frac{1}{2}(\vert \kappa \vert - 1) $ and 
$p-q=|\sum_{g\in G_{[2]}}\sigma(g)|$. 
Moreover, two such gradings, $\Gamma_B(\kappa,\TrSig)$ and $\Gamma_B(\kappa',\TrSig')$,
are isomorphic if and only if
$(\kappa,\TrSig)$ and $(\kappa',\TrSig')$
are in the same $ G_{[2]} \times \{ \pm 1 \} $-orbit.  
\end{theorem}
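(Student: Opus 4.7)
The plan is to reduce the problem to the classification of graded-simple associative algebras with involution already established in Sections~\ref{se:assoc_description}--\ref{se:assoc_classification}, specialize the parameters to this narrow case, and then read off the answer.

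First, since $\cL$ has type $B_r$ with $r \ge 2$, the restriction map $\theta\colon\mathbf{Aut}(\cR,\varphi)\to\mathbf{Aut}(\cL)$ is an isomorphism of affine group schemes, so every abelian group grading on $\cL$ extends uniquely to a $G$-grading on $(\cR,\varphi)$, and isomorphism of gradings on $\cL$ is equivalent to isomorphism on $(\cR,\varphi)$. Thus it suffices to classify $G$-gradings on $(\cR,\varphi)=(M_n(\RR),\varphi)$ with $n=2r+1$ odd and $\varphi$ orthogonal.

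Next, I apply the structure theorem (Theorem~\ref{th:struct}), placing us in Case~(1) of Subsection~\ref{sse:notation}. I need to argue that all parameters simplify. Because $\cR\cong M_n(\RR)$ has center $\RR$, we are indeed in Case (1); since $n$ is odd and (by Subsection~\ref{sse:consequences}) $T$ is an elementary $2$-group with $\vert T\vert\dim\Delta_0$ dividing $n^2/\vert\kappa\vert^2$, the only possibility is $\Delta_0=\RR$, $T=\{e\}$, $\mu$ trivial and $\ell=1$, i.e.\ $\cD=\RR$ with the identity involution. Orthogonality of $\varphi$ forces $\delta=+1$. Thus $(\cR,\varphi)\cong M(\RR,\{e\},1,g_0,\kappa,\TrSig,1)$ for suitable data, with $G/T=G$ and $\tau(x)=g_0x^2$.

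Now I normalize $g_0$. By property (b) of Definition~\ref{def:MultFunct}, $\kappa$ is invariant under the involution $x\mapsto g_0^{-1}x^{-1}$; since $\vert\kappa\vert=n$ is odd, this involution must have a fixed point, giving some $x\in G$ with $g_0=x^{-2}\in G^{[2]}$. The $G$-action sends $g_0\mapsto g^{-2}g_0$, so by choosing $g=x^{-1}$ we may replace $g_0$ by $e$. With $g_0=e$, the set $(G/T)_{g_0}$ equals $G_{[2]}$ and $\tau(x)=x^2$, so $\tau(x)=e$ for every $x\in G_{[2]}$. Applying formula \eqref{eq:global_signature_nonCcentral} with $\ell=1$ yields
\[
\mathrm{signature}(\varphi)=\Big\vert\sum_{g\in G_{[2]}}\TrSig(g)\Big\vert,
\]
which is precisely $p-q$. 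This identifies the real form $\mathfrak{so}_{p,q}(\RR)$.

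Finally, I invoke Corollary~\ref{cor:1}, which (since $Z(\cD)=\RR$) says two such graded algebras with involution are isomorphic iff the triples $(g_0,\kappa,\TrSig)$ lie in the same $G\times\{\pm 1\}$-orbit. Having fixed $g_0=e$ in both triples, the residual action is that of the stabilizer $G_{[2]}\times\{\pm 1\}$, and since $T=\{e\}$ the sign factor $\beta(u,\tau(x))$ in Proposition~\ref{prop:G-action_simplification_nonCcentral} is trivial, so the action reduces to $(g\cdot\kappa)(x)=\kappa(g^{-1}x)$, $(g\cdot\TrSig)(x)=\TrSig(g^{-1}x)$, and the sign flip $\TrSig\mapsto-\TrSig$. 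The main delicate point is to verify that all the extra conditions appearing in Definitions~\ref{def:MultFunct} and~\ref{def:SignFunct} become vacuous or trivial here: conditions (c)/(c$'$) are vacuous since $-\delta=-1\ne 1=\eta(e)$, and condition (ii) reduces exactly to the stated parity requirement $\TrSig(x)\equiv\kappa(x)\pmod 2$ for $x\in G_{[2]}$, with $\TrSig$ vanishing off $G_{[2]}$. This yields the theorem.
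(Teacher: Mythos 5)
Your proposal is correct and follows essentially the same route as the paper: extend the grading to $(M_{2r+1}(\RR),\varphi)$, use Theorem \ref{th:struct} together with the oddness of $n$ to force $\Delta_0=\RR$, $T=\{e\}$, $\ell=1$, $\delta=1$, normalize $g_0=e$ via the fixed point of $x\mapsto g_0^{-1}x^{-1}$ guaranteed by $\vert\kappa\vert$ odd, read off $p-q$ from Equation \eqref{eq:global_signature_nonCcentral}, and conclude with Corollary \ref{cor:1}. The only difference is that you spell out explicitly the parameter reductions and the vacuity of conditions (c) and (ii) that the paper handles in the discussion preceding the theorem.
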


\begin{proof}
Any grading on $\cL$ uniquely extends to a grading on $M_{2r+1}(\RR)$ 
compatible with the involution defining $\cL$. 
By Theorem \ref{th:struct} and the above discussion, the resulting graded algebra with involution 
must be isomorphic to $M(\RR,\{e\},1,e,\kappa,\TrSig,1)$ for some $\kappa$ and $\TrSig$. 
Note that $p-q=\mathrm{signature}(\varphi)$ is given by Equation \eqref{eq:global_signature_nonCcentral},
which simplifies in view of the fact $T=\{e\}$. 
This proves the first assertion. The second assertion follows from Corollary \ref{cor:1}. 
\end{proof}

\begin{remark}
This result is valid for $r=1$ as well, and gives another parametrization of gradings for the real forms of type $A_1$:
$\mathfrak{so}_{3,0}(\RR)\cong\mathfrak{su}(2)\cong\mathfrak{sl}_1(\HH)$ and $\mathfrak{so}_{2,1}(\RR)\cong\mathfrak{su}(1,1)\cong\mathfrak{sl}_2(\RR)$.
\end{remark}

\subsection{Series C}

Let $(\cR,\varphi)=M(\Delta_0,T,\mu,g_0,\kappa,\TrSig,\delta)$ as defined in Subsection \ref{sse:notation}, 
where $\Arf(\mu)=-\delta$ if $\Delta_0\in\{\RR,\CC\}$ 
and $\Arf(\mu)=\delta$ if $\Delta_0=\HH$, so that $\varphi$ is a symplectic involution.
The restriction of the $G$-grading of $\cR$ to its Lie subalgebra $\mathrm{Skew}(\cR,\varphi)$ will be denoted by 
$\Gamma_C(\Delta_0,T,\mu,g_0,\kappa,\TrSig,\delta)$. 

\begin{theorem}\label{th:C}
Let $\cL$ be one of the real forms of type $C_r$ for $r\ge 2$, namely, $\mathfrak{sp}_{2r}(\RR)$ 
or $\mathfrak{sp}(p,q)$ where $p+q = r$, $p\ge q$.
Then any $G$-grading on $\cL$ is isomorphic to
$\Gamma_C(\Delta_0,T,\mu,g_0,\kappa,\TrSig,\delta)$
where $ g_0\in\Theta $, $ r = \frac{1}{2}\vert \kappa \vert \sqrt{\vert T\vert\dim\Delta_0} $ and, in the first case,
$\delta=-1$, while in the second case, $ \delta = 1 $ and
$p-q$ equals the right-hand side
of Equation \eqref{eq:global_signature_nonCcentral}.
Moreover, two such gradings, $\Gamma_C(\Delta_0,T,\mu,g_0,\kappa,\TrSig,\delta)$
and $\Gamma_C(\Delta_0',T',\mu',g_0',\kappa',\TrSig',\delta')$,
are isomorphic if and only if
$ \Delta_0 = \Delta_0' $, $ T = T'$,
$ \mu = \mu' $, $ g_0 = g_0' $, $ \delta = \delta' $, and
$(\kappa,\TrSig)$ and $(\kappa',\TrSig')$
are in the same $ G_{[2]} \times \{ \pm 1 \} $-orbit in the set $\ParSet(G,\Delta_0,T,\mu,g_0,\delta)$. 
\end{theorem}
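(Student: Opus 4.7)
The plan is to mirror the proof of Theorem~\ref{th:B} but with the added subtlety that here $\cD$ need not be trivial, and we must separately identify the split form $\mathfrak{sp}_{2r}(\RR)$ from the non-split forms $\mathfrak{sp}(p,q)$.

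First, I would invoke the fact that for classical simple Lie algebras not of type $A_1$ or $D_4$, the restriction map $\mathbf{Aut}(\cR,\varphi)\to\mathbf{Aut}(\cL)$ is an isomorphism of affine group schemes, so any $G$-grading on $\cL$ extends uniquely to a $G$-grading on $(\cR,\varphi)$ as a graded algebra with involution. Since $(\cR,\varphi)$ is central simple, Theorem~\ref{th:struct} applies and realizes $(\cR,\varphi)$ as $M(\cD,\varphi_0,g_0,\kappa,\TrSig,\delta)$ with $(\cD,\varphi_0)$ central simple as an algebra with involution, placing us squarely in Case~(1) of Subsection~\ref{sse:notation}. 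By Lemma~\ref{lem:equiv_of_involutions} together with Skolem--Noether, any two degree-preserving involutions on $\cD$ restricting to conjugation on $\De$ are equivalent in the sense of Definition~\ref{def:equiv_of_involutions}, so we may replace $\varphi_0$ by a distinguished involution and rewrite the data as $M(\Delta_0,T,\mu,g_0,\kappa,\TrSig,\delta)$. Using the $G$-action on the parameter triple, we can assume $g_0\in\Theta$.

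Next I would identify the real form. Since the distinguished involution $\varphi_0$ on $\cD\cong M_\ell(\Delta)$ is orthogonal when $\Delta=\RR$ and symplectic when $\Delta=\HH$, examining the matrix form of $\varphi$ in Equations \eqref{eq:MatrInv} and~\eqref{eq:Phi} shows that $\varphi$ is symplectic exactly when the pair $(\Delta,\delta)$ is either $(\RR,-1)$ or $(\HH,+1)$. Translating through the Arf-invariant dictionary in Subsection~\ref{sse:notation}, this is precisely the condition $\Arf(\mu)=-\delta$ if $\Delta_0\in\{\RR,\CC\}$ and $\Arf(\mu)=\delta$ if $\Delta_0=\HH$, as stated. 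The value of $r$ is then read off from $\dim_\RR\cR = n^2\dim\Delta = |\kappa|^2|T|\dim\Delta_0$ (the exponent of $\Delta_0$ cancels with the case-split formula for $\ell$), giving $2r=|\kappa|\sqrt{|T|\dim\Delta_0}$ in both cases. To distinguish $\mathfrak{sp}_{2r}(\RR)$ from $\mathfrak{sp}(p,q)$, note that the former corresponds to $\Delta=\RR$ (whence $\delta=-1$) and the latter to $\Delta=\HH$ (whence $\delta=+1$), and in the quaternionic case the signature $p-q$ of the defining hermitian form coincides with the global signature of $\varphi$ as an involution on $M_n(\HH)\cong\cR$, which is computed by Equation~\eqref{eq:global_signature_nonCcentral}.

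Finally, the isomorphism criterion for two such gradings follows directly from Corollary~\ref{cor:1}: two graded algebras $M(\Delta_0,T,\mu,g_0,\kappa,\TrSig,-1)$ and $M(\Delta_0',T',\mu',g_0',\kappa',\TrSig',-1)$ (or with $\delta=+1$ respectively) are isomorphic if and only if the quintuples $(\Delta_0,T,\mu,g_0,\delta)$ match and $(\kappa,\TrSig)$, $(\kappa',\TrSig')$ lie in the same $G_{[2]}\times\{\pm 1\}$-orbit; together with the fact that the restriction to $\mathrm{Skew}(\cR,\varphi)$ gives a bijection on $G$-gradings, this yields the second part of the theorem. The main obstacle I anticipate is the careful bookkeeping in the Arf-invariant/signature computation: one must verify that the two different scalings of $\ell$ (depending on $\Delta_0$) produce the same formula for $r$, and that the ambiguity of $\delta$ does not create spurious identifications between the split and non-split forms.
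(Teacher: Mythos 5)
Your proposal is correct and follows essentially the same route as the paper's proof: extend the grading to $(\cR,\varphi)$, apply Theorem~\ref{th:struct} to land in Case~(1) of Subsection~\ref{sse:notation}, read off $\Delta$ (hence the split versus non-split form) from $\delta$ via the Arf-invariant dictionary, compute $p-q$ from the global signature formula~\eqref{eq:global_signature_nonCcentral}, and conclude the isomorphism criterion from Corollary~\ref{cor:1}. The only difference is that you spell out the preliminary reductions (the isomorphism $\mathbf{Aut}(\cR,\varphi)\cong\mathbf{Aut}(\cL)$ and the equivalence of all involutions $\varphi_0$ to the distinguished one) that the paper treats as already established in Section~\ref{se:Lie} and Subsection~\ref{sse:consequences}.
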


\begin{proof}
If we give the algebra $M_{2r}(\RR)$ or $M_r(\HH)$ a $G$-grading that is compatible with a fixed symplectic involution, then,
by Theorem \ref{th:struct}, the resulting graded algebra with involution must be isomorphic to some $M(\Delta_0,T,\mu,g_0,\kappa,\TrSig,\delta)$ as above.
If $\delta=-1$ then $\Arf(\mu)=1$ for $\Delta_0\in\{\RR,\CC\}$ and $\Arf(\mu)=-1$ for $\Delta_0=\HH$, 
hence $\cD\cong M_\ell(\RR)$ as an ungraded algebra, where $\ell=\sqrt{\vert T\vert\dim\Delta_0}$.
This implies that $\mathrm{Skew}(\cR,\varphi)\cong\mathfrak{sp}_{k\ell}(\RR)$ where $k=\vert\kappa\vert$.
Similarly, if $\delta=1$ then $\cD\cong M_\ell(\HH)$ as an ungraded algebra, where $\ell=\frac{1}{2}\sqrt{\vert T\vert\dim\Delta_0}$, 
and this implies $\mathrm{Skew}(\cR,\varphi)\cong\mathfrak{sp}(p,q)$ where $p+q=k\ell$ and $p-q=\mathrm{signature}(\varphi)$.
This proves the first assertion. The second assertion follows from Corollary \ref{cor:1}. 
\end{proof}

\begin{remark}
This result is valid for $r=1$ as well, and gives yet another parametrization of gradings for the real forms of type $A_1$:
$\mathfrak{sp}_2(\RR)=\mathfrak{sl}_2(\RR)$ and $\mathfrak{sp}(1,0)=\mathfrak{sl}_1(\HH)$.
We also get another parametrization of gradings for type $B_2=C_2$.
\end{remark}

\subsection{Series D}

Let $(\cR,\varphi)=M(\Delta_0,T,\mu,g_0,\kappa,\TrSig,\delta)$ as defined in Subsection \ref{sse:notation}, 
where $\Arf(\mu)=\delta$ if $\Delta_0\in\{\RR,\CC\}$ 
and $\Arf(\mu)=-\delta$ if $\Delta_0=\HH$, so that $\varphi$ is an orthogonal involution.
The restriction of the $G$-grading of $\cR$, with even $|\kappa|\ell$ if $\delta=1$, 
to its Lie subalgebra $\mathrm{Skew}(\cR,\varphi)$ will be denoted by 
$\Gamma_D(\Delta_0,T,\mu,g_0,\kappa,\TrSig,\delta)$. 

The proof of the next result is completely analogous to Theorem \ref{th:C}.

\begin{theorem}\label{th:D}
Let $\cL$ be one of the real forms of type $D_r$ for $r=3$ or $r\ge 5$, namely, $\mathfrak{u}^*(r)$ 
or $\mathfrak{so}_{p,q}(\RR)$ where $p+q = 2r$, $p\ge q$.
Then any $G$-grading on $\cL$ is isomorphic to
$\Gamma_D(\Delta_0,T,\mu,g_0,\kappa,\TrSig,\delta)$
where $ g_0\in\Theta $, $ r = \frac{1}{2}\vert \kappa \vert \sqrt{\vert T\vert\dim\Delta_0} $ and, in the first case,
$\delta=-1$, while in the second case, $ \delta = 1 $ and
$p-q$ equals the right-hand side
of Equation \eqref{eq:global_signature_nonCcentral}.
Moreover, two such gradings, $\Gamma_D(\Delta_0,T,\mu,g_0,\kappa,\TrSig,\delta)$
and $\Gamma_D(\Delta_0',T',\mu',g_0',\kappa',\TrSig',\delta')$,
are isomorphic if and only if
$ \Delta_0 = \Delta_0' $, $ T = T'$,
$ \mu = \mu' $, $ g_0 = g_0' $, $ \delta = \delta' $, and
$(\kappa,\TrSig)$ and $(\kappa',\TrSig')$
are in the same $ G_{[2]} \times \{ \pm 1 \} $-orbit in the set $\ParSet(G,\Delta_0,T,\mu,g_0,\delta)$. 
\qed 
\end{theorem}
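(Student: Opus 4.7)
The plan is to mirror the proof of Theorem \ref{th:C} verbatim, with the roles of orthogonal and symplectic involutions (and hence of the values $\delta=1$ and $\delta=-1$) interchanged. First I would use the fact that, since $r=3$ or $r\ge 5$, we avoid the triality case $D_4$, so that the restriction homomorphism $\theta:\mathbf{Aut}(\cR,\varphi)\to\mathbf{Aut}(\cL)$ is an isomorphism of affine group schemes, as recalled at the beginning of Section \ref{se:Lie}. This sets up a bijection between $G$-gradings on $\cL$ and $G$-gradings on the underlying central simple real associative algebra $\cR$ compatible with the orthogonal involution $\varphi$ that defines $\cL$. Applying Theorem \ref{th:struct} and the conventions of Subsection \ref{sse:notation} (Case (1), since $Z(\cR)=\RR$), the resulting graded algebra with involution is isomorphic to some $M(\Delta_0,T,\mu,g_0,\kappa,\TrSig,\delta)$ with $g_0\in\Theta$.

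The second step is to read off, in terms of the data, which real form arises. From the discussion preceding Equation \eqref{eq:global_signature_nonCcentral}, the distinguished involution $\varphi_0$ on $\cD$ is orthogonal whenever $\cD\cong M_\ell(\RR)$ as an ungraded algebra and symplectic whenever $\cD\cong M_\ell(\HH)$ as an ungraded algebra; combined with whether $B$ is hermitian or skew-hermitian, the induced $\varphi$ is orthogonal precisely in the two cases ($\delta=1$ with $\cD\cong M_\ell(\RR)$) or ($\delta=-1$ with $\cD\cong M_\ell(\HH)$). Reading off the correspondence between $\Arf(\mu)$ and the ungraded isomorphism type of $\cD$ listed in Subsection \ref{sse:notation} yields exactly the $\Arf(\mu)$ conditions in the statement. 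In the first case $\cR\cong M_{k\ell}(\RR)$ with $k\ell=2r$, so $\mathrm{Skew}(\cR,\varphi)\cong\mathfrak{so}_{p,q}(\RR)$ with $p+q=k\ell$ and $p-q=\mathrm{signature}(\varphi)$, which is given by Equation \eqref{eq:global_signature_nonCcentral}. In the second case $\cR\cong M_r(\HH)$ equipped with a skew-hermitian form, hence $\mathrm{Skew}(\cR,\varphi)\cong\mathfrak{u}^*(r)$.

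The isomorphism criterion then follows directly from Corollary \ref{cor:1}: two objects $M(\Delta_0,T,\mu,g_0,\kappa,\TrSig,\delta)$ and $M(\Delta_0',T',\mu',g_0',\kappa',\TrSig',\delta')$ are isomorphic as graded algebras with involution if and only if the first four data coincide, $\delta=\delta'$, and $(\kappa,\TrSig)$ and $(\kappa',\TrSig')$ lie in the same $G_{[2]}\times\{\pm 1\}$-orbit in $\ParSet(G,\Delta_0,T,\mu,g_0,\delta)$.

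I do not expect any genuine obstacle beyond bookkeeping: the only delicate point is to check that, relative to Theorem \ref{th:C}, the pairing $(\Delta_0,\Arf(\mu),\delta)\mapsto$ (ungraded type of $\cD$, type of $\varphi$) has its sign convention inverted, and that the built-in parity constraint $|\kappa|\ell\in 2\ZZ$ in the definition of $\Gamma_D(\Delta_0,T,\mu,g_0,\kappa,\TrSig,1)$ is precisely what is needed to make $\dim\mathrm{Skew}(\cR,\varphi)$ match $\dim\mathfrak{so}_{2r}$ for type $D_r$.
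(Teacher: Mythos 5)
Your proposal is correct and follows exactly the route the paper takes: the paper's own proof of Theorem \ref{th:D} is simply declared to be ``completely analogous to Theorem \ref{th:C}'', and your argument is precisely that analogy carried out --- extend the grading to $(\cR,\varphi)$ via the isomorphism of automorphism group schemes (valid since $r\ne 4$), apply Theorem \ref{th:struct} in Case (1) of Subsection \ref{sse:notation}, identify the real form from $(\Delta_0,\Arf(\mu),\delta)$ and the global signature, and invoke Corollary \ref{cor:1} for the isomorphism criterion. The bookkeeping you flag (orthogonal $\varphi$ occurs exactly for $\delta=1$ with $\cD\cong M_\ell(\RR)$ or $\delta=-1$ with $\cD\cong M_\ell(\HH)$, matching the stated $\Arf(\mu)$ conditions) checks out.
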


\begin{remark} 
This theorem gives another parametrization of gradings for type $A_3=D_3$.
\end{remark}

\section{Appendix: special linear Lie algebras}

Type II gradings on $\mathfrak{sl}_n(\FF)$ (or $\mathfrak{psl}_n(\FF)$ if $\mathrm{char}\,\FF$ divides $n$) 
over an algebraically closed field $\FF$, $\mathrm{char}\,\FF\ne 2$, were constructed and classified 
in \cite{BK10,E10,EK13} by means of refining a grading on the associative algebra $M_n(\FF)$ with a suitable 
(degree-preserving) anti-automorphism of this algebra. In this paper, we have classified gradings on 
$\mathfrak{sl}_n(\Delta)$, $\Delta\in\{\RR,\HH\}$, in terms of gradings on the associative algebra 
$M_n(\Delta)\times M_n(\Delta)$ with involution of the second kind (Theorem \ref{th:AII_RH}). 
The purpose of this section is to establish a link between these two models.

Let $\FF$ be a field that is either algebraically closed with $\mathrm{char}\,\FF\ne 2$ or real closed (and hence 
$\mathrm{char}\,\FF=0$). Let $\cR$ be a central simple associative algebra over $\FF$ and 
let $\tilde{\cR}=\cR\otimes_\FF\KK$, where $\KK=\FF\times\FF$. Suppose $\tilde{\cR}$ is given a $G$-grading of Type II,
that is, its center $\KK$ is nontrivially graded: $\KK=\KK_e\oplus\KK_f$ where $\KK_e=\FF$, $\KK_f=\FF\zeta$, 
$\zeta=(1,-1)$, and $f\in G$ is an element of order $2$, referred to as the distinguished element.

\emph{Assume that there exists a character $\chi:G\to\FF^\times$ such that $\chi(f)=-1$.} 
(This is automatic if $\FF$ is algebraically closed, but note that $\chi$ cannot always be chosen to satisfy $\chi^2=1$.) 
Fix one such $\chi$ and consider its action on $\tilde{\cR}$ associated to the $G$-grading: $\psi(a):=\chi\cdot a=\chi(g)a$ 
for all $a\in\tilde{\cR}_g$ and $g\in G$. Since $\chi\cdot\zeta=-\zeta$, 
the automorphism $\psi$ interchanges the central idempotents $\varepsilon_1=(1,0)$ and $\varepsilon_2=(0,1)$, 
and hence we can use the restriction $\psi:\tilde{\cR}\varepsilon_1\to\tilde{\cR}\varepsilon_2$
to identify these simple components of $\tilde{\cR}$ with each other. 
Thus, we can identify $\tilde{\cR}$ with $\cR\times\cR$ such that $(x,y)\in\cR\times\cR$ corresponds to 
$x\varepsilon_1+\psi(y)\varepsilon_2\in\tilde\cR$, where we regard $\cR$ as a subalgebra of $\tilde{\cR}$ through
the canonical map $x\mapsto x\otimes 1$. Then the action of $\chi$ on $\cR\times\cR$ 
is given by $\chi\cdot(x,y)=(\psi^2(y),x)$.

Consider the coarsening of the grading on $\tilde{\cR}$ associated to the quotient map 
$G\to\overline{G}:=G/\langle f\rangle$,
that is, $\tilde{\cR}=\bigoplus_{\bar{g}\in\overline{G}}\tilde{\cR}_{\bar{g}}$ where 
$\tilde{\cR}_{\bar{g}}=\tilde{\cR}_g\oplus\tilde{\cR}_{gf}$ for all $g\in G$.
The idempotents $\varepsilon_1$ and $\varepsilon_2$ have degree $\bar{e}$, hence the simple components of $\tilde{\cR}$ 
are $\overline{G}$-graded and so is its subalgebra $\cR$. Moreover, the projections 
$\mathrm{pr}_1$ and $\mathrm{pr}_2$ of $\cR\times\cR$ onto $\cR$ 
are homomorphisms of $\overline{G}$-graded algebras. The action of $\chi$ can be used to recover the  
$G$-grading on $\tilde{\cR}$ from its coarsening as follows:
\[
\tilde{\cR}_g=\{a\in\tilde{\cR}_{\bar{g}}\mid\chi\cdot a=\chi(g)a\}
=\{(x,\chi(g)^{-1}x)\mid x\in\cR_{\bar{g}}\}.
\]
Since $\cR$ is simple, we can identify $\cR=\End_\cD(\cV)$ as a $\overline{G}$-graded algebra, 
for some graded-division algebra $\cD$ and a graded right $\cD$-module $\cV$. Let $\tilde{\cD}=\cD\times\cD$ 
and $\tilde{\cV}=\cV\times\cV$. We can refine the $\overline{G}$-gradings on $\tilde{\cD}$ and $\tilde{\cV}$ 
by setting $\tilde{\cD}_g:=\{(d,\chi(g)^{-1}d)\mid d\in\cD_{\bar{g}}\}$
and similarly for $\cV$. Then $\tilde{\cV}$ is a graded right $\tilde{\cD}$-module and 
$\tilde{\cR}\cong\End_{\tilde{\cD}}(\tilde{\cV})$ as a $G$-graded algebra.

\begin{remark}
The above is an example of the loop construction, which makes $G$-graded algebras and modules from 
$\overline{G}$-graded ones (see \cite{ABFP,EKloop}).
\end{remark}

It is easy to see that $\tilde{\cD}$ is a graded-division algebra with $\tilde{\cD}_e\cong\cD_e$,
the support $T$ of $\tilde{\cD}$ contains $f$, and the support of $\cD$ is 
$\overline{T}:=T/\langle f\rangle$. 
Moreover, the alternating bicharacter $\beta$ of $\tilde{\cD}$ has radical $\langle f\rangle$ and 
induces the nondegenerate bicharecter $\bar{\beta}$ of $\cD$ passing to the quotient modulo $\langle f\rangle$. 

Now, any involution of the second kind on $\cR\times\cR$ has the form 
$\tilde{\varphi}(x,y)=(\varphi(y),\varphi^{-1}(x))$ where $\varphi$ is an anti-automorphism of $\cR$.
Moreover, $\tilde{\varphi}$ is an involution of $\cR\times\cR$ as a $G$-graded algebra if and only if 
$\varphi$ is an anti-automorphism of $\cR$ as a $\overline{G}$-graded algebra and $\varphi^2(x)=\chi^2\cdot x$ 
for all $x\in\cR$. (Note that $\chi^2(f)=1$, so $\chi^2$ can be regarded as a character of $\overline{G}$
and therefore acts on the $\overline{G}$-graded algebra $\cR$; its action is the restriction of $\psi^2$.)
If this is the case, the $G$-grading on $\tilde{\cR}$ restricts to its Lie subalgebra 
$\cL=\mathrm{Skew}(\tilde{\cR},\tilde{\varphi})$, which is isomorphic to $\cR^{(-)}$ by means of $\mathrm{pr}_1$.
Since $\cL=\{(x,-\varphi^{-1}(x)) \mid x\in\cR\}$, the isomorphism $\mathrm{pr}_1\vert_\cL$ maps $\cL_g$ onto
\[
\cR_g=\{x\in\cR_{\bar{g}} \mid \varphi(x)=-\chi(g)x\}.
\] 
Note that $\cR=\bigoplus_{g\in G}\cR_g$ is a grading of the Lie algebra $\cR^{(-)}$ but not of the associative 
algebra $\cR$. It restricts to the Lie subalgebra $[\cR,\cR]$, which is a special linear algebra.
If $\mathrm{char}\,\FF>0$ then $\cR'=[\cR,\cR]$ may have a $1$-dimensional center; passing modulo the center, we 
obtain a grading on the corresponding projective special linear algebra, which is simple.

If $\FF$ is real closed then we know that the existence of $\tilde{\varphi}$ forces $T$ to be an elementary $2$-group. 
The same is true if $\FF$ is algebraically closed: because of $\varphi$, $\overline{T}$ must be an elementary $2$-group 
(see e.g. \cite[Lemma 2.50]{EK13}) and the same argument as in Subsection \ref{sse:consequences} shows that $f$ cannot 
be a square in $T$. Therefore, $\chi$ always takes values $\pm 1$ on $T$, and we can write 
$T=\overline{T}\times\langle f \rangle$ by identifying $\overline{T}$ with the subgroup $\{t\in T \mid \chi(t)=1\}$
(compare with Proposition 3.25 in \cite{EK13}, where $H$ stands for our $T$).
Hence, $\cD$ can be considered $G$-graded and $\tilde{\cD}$ can be identified with $\cD\otimes_\FF \KK$ 
as a $G$-graded algebra. If $\FF$ is algebraically closed, then $\cD_e=\FF$ and the isomorphism class of $\cD$
is determined by $(\overline{T},\bar{\beta})$. Using the same approach as in Section \ref{se:Lie}, 
we can obtain the following analog of Theorem \ref{th:AII_RH} for algebraically closed fields.

Let $\tilde{\cR}=\End_{\tilde{\cD}}(\tilde{\cV})$ where $\tilde{\cD}$ is determined by $(T,\beta)$ and $\tilde{\cV}$ 
is determined by a multiplicity function $\kappa:G/T\to\ZZ_{\ge 0}$ (with finite support). 
For each $(T,\beta)$, we fix a quadratic form $\eta$ on $T$ with polar form $\beta$ such that $\eta(f)=-1$.
This gives us an involution $\tilde{\varphi}_0$ of the second kind on $\tilde{\cD}$.
For a given $g_0\in G$, if $\kappa$ is admissible, that is, $\kappa(g_0^{-1}x^{-1})=\kappa(x)$ for all $x\in G/T$,
and $ \kappa(x) \equiv 0 \pmod 2 $ if $ x \in (G/T)_{g_0} $ and $ \eta(\tau(x)) = -1 $ 
(compare with Definition \ref{def:MultFunct}), then we have 
a nondegenerate hermitian form $\tilde{B}$ of degree $g_0$ on $\tilde{\cV}$, 
and all such hermitian forms are isomorphic (no signature functions in the algebraically closed case!).
The form $\tilde{B}$ gives us an involution on $\tilde{\cR}$, and the $G$-grading of $\cR$ induces 
an outer $G$-grading on the quotient of the Lie algebra $\mathrm{Skew}(\cR,\varphi)'$ modulo its center.
Denote this grading by $\Gamma_{\mathfrak{psl}}^{\mathrm{(II)}}(T,\beta,\eta,g_0,\kappa)$. 

\begin{theorem}\label{th:AII_ac}
Let $\FF$ be an algebraically closed field, $\mathrm{char}\,\FF\ne 2$.
Let $\cL$ be the simple Lie algebra of type $A_r$ for $r\ge 2$, namely, $\mathfrak{psl}_{r+1}(\FF)$.
If $\mathrm{char}\,\FF=3$, assume that $r\ge 3$. 
Then any outer $G$-grading on $\cL$ is isomorphic to 
$\Gamma_{\mathfrak{psl}}^{\mathrm{(II)}}(T,\beta,\eta,g_0,\kappa)$ where 
$r=\vert\kappa\vert \sqrt{\frac{1}{2}\vert T\vert}-1$.
Moreover, $\Gamma_{\mathfrak{psl}}^{\mathrm{(II)}}(T,\beta,\eta,g_0,\kappa)$
and $\Gamma_{\mathfrak{psl}}^{\mathrm{(II)}}(T',\beta',\eta',g_0',\kappa')$
are isomorphic if and only if
$ T = T'$, $ \beta = \beta' $ (hence $\eta=\eta'$), and
$(g_0,\kappa)$ and $(g_0',\kappa')$ are in the same $G$-orbit,
where $g\cdot(g_0,\kappa)=(g^{-2}g_0,g\cdot\kappa)$ and $(g\cdot\kappa)(x)=\kappa(g^{-1}x)$ for all $x\in G/T$. 
\qed
\end{theorem}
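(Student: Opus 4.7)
The plan is to mimic the proof of Theorem \ref{th:AII_RH}, adapted to the algebraically closed setting where the loop construction described just before the theorem makes the structure more transparent. First, I would invoke the standard fact that, for type $A_r$ with $r\ge 2$ (excluding $A_2$ in characteristic $3$, where the restriction map between automorphism group schemes fails to be an isomorphism), any abelian group grading on $\cL=\mathfrak{psl}_{r+1}(\FF)$ lifts uniquely to a compatible grading on $(\tilde{\cR},\tilde{\varphi})=(M_{r+1}(\FF)\times M_{r+1}(\FF),\tilde{\varphi})$ with $\tilde{\varphi}$ of the second kind, by the isomorphism $\mathbf{Aut}(\tilde{\cR},\tilde{\varphi})\cong\mathbf{Aut}(\cL)$; being outer on $\cL$ is equivalent to being of Type II on $\tilde{\cR}$, i.e., $Z(\tilde{\cR})=\FF\times\FF$ is nontrivially graded with distinguished element $f$ of order $2$.

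Next I would apply the algebraically closed analog of Theorem \ref{th:struct}, writing $(\tilde{\cR},\tilde{\varphi})\cong\End_{\tilde{\cD}}(\tilde{\cV})$ equipped with a nondegenerate $\tilde{\varphi}_0$-sesquilinear form $\tilde{B}$, where $\tilde{\cD}$ is a graded-division $\FF$-algebra with $\tilde{\cD}_e=\FF$. As noted in the paragraph preceding the theorem, the existence of $\tilde{\varphi}$ forces $T=\supp\tilde{\cD}$ to be an elementary $2$-group containing $f$, and $\tilde{\cD}$ is determined up to graded isomorphism by the nondegenerate alternating bicharacter $\beta:T\times T\to\FF^\times$. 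Fix for each $(T,\beta)$ a quadratic form $\eta$ on $T$ with polar form $\beta$ satisfying $\eta(f)=-1$; this pins down the distinguished involution $\tilde{\varphi}_0$, and $\tilde{\cV}$ is described by the multiplicity function $\kappa$ and the degree $g_0$ of $\tilde{B}$. The admissibility condition on $\kappa$ is exactly as stated before the theorem, and the rank formula $r=|\kappa|\sqrt{|T|/2}-1$ follows from $\dim_\FF\tilde{\cR}=2(r+1)^2$ together with $|T|=\dim_\FF\tilde{\cD}$.

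Crucially, over the algebraically closed field $\FF$, all nondegenerate hermitian $\tilde{\varphi}_0$-sesquilinear forms $\tilde{B}$ on a fixed admissible $(\tilde{\cV},g_0)$ are equivalent, since each restricted block form $B_i$ is either a nondegenerate hermitian form on a vector space over $\FF$ (which has no signature) or lies in the ``hyperbolic'' isotypic pairing case. Consequently no signature function $\TrSig$ is needed, and Theorem~\ref{th:iso} (together with Corollary~\ref{cor:2}) reduces to orbits in the parameter set under $G\times(C\rtimes A)$; but the subgroup $C\rtimes A$ acts only on the signature data and so acts trivially on $(g_0,\kappa)$. Hence only the shift-of-grading action of $G$ survives, giving the formulas $g\cdot g_0=g^{-2}g_0$ and $(g\cdot\kappa)(x)=\kappa(g^{-1}x)$.

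Finally, if $\mathrm{char}\,\FF$ divides $r+1$, then the center of $\mathrm{Skew}(\tilde{\cR},\tilde{\varphi})'$ is one-dimensional and concentrated in degree $e$, so quotienting by it to obtain $\cL$ does not change the grading classification. The main obstacle I anticipate is only bookkeeping: carefully checking that, in the algebraically closed setting, both the structure theorem and the isomorphism theorem of Sections~\ref{se:assoc_description}--\ref{se:assoc_classification} go through verbatim (they do, since $\De=\FF$ simplifies every appearance of $\RR$, $\CC$, $\HH$), and that the choice of $\eta$ for each $(T,\beta)$ is harmless because distinguished involutions for the same $(T,\beta)$ differ only by an inner automorphism that can be absorbed into the identification $\tilde{\varphi}_0=\tilde{\varphi}_0'$.
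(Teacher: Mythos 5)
Your proposal is correct and follows essentially the same route as the paper: the theorem is stated with only a \qed{} because it is obtained exactly as you describe, by running the Section~\ref{se:Lie} machinery (structure theorem plus isomorphism theorem) for $(\tilde{\cR},\tilde{\varphi})=(M_n(\FF)\times M_n(\FF),\tilde{\varphi})$ over the algebraically closed field, where the absence of a signature invariant for nondegenerate symmetric (and skew-symmetric) bilinear forms collapses the parameter $\TrSig$ and leaves only the $G$-action on $(g_0,\kappa)$. Your observations about the char $3$, $r=2$ exclusion, the rank formula, and the triviality of the $C\rtimes A$-action are all in line with the paper's intent.
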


To compare this result with Theorem 3.53 in \cite{EK13}, we note that in that work the parameters of the grading 
are $(H,h,\beta,\kappa,\gamma,\mu_0,\bar{g}_0)$, where $H$ stands for our $T$, $h$ for $f$, $\beta$ for $\bar{\beta}$,
and $(\kappa,\gamma)$ for $\kappa$ ($\gamma$ refers to the support of the multiplicity function and $\kappa$ to the 
multiplicities of the elements of the support). Clearly, $f$ and $\bar{\beta}$ carry the same information as $\beta$.
As to the parameters $\mu_0\in\FF^\times$ and $\bar{g}_0\in\overline{G}$, they determine the $\varphi_0$-sesquilinear form 
$\cV\times\cV\to\cD$ that gives the anti-automorphism $\varphi$ on $\cR$. 
In \cite{EK13}, the involution $\varphi_0$ on $\cD$ was obtained by fixing a standard matrix realization of 
$\cD$ and setting $\varphi_0(X):=X^T$. (These are the involutions that correspond to quadratic forms $\bar{\eta}$ 
with polar form $\bar{\beta}$ and $\Arf(\bar{\eta})=1$.) 
Let $\iota$ be the involution of $\KK=\FF\times\FF$ that interchanges the two components 
(in other words, $\iota(\zeta)=-\zeta$). Then we can take $\varphi_0\otimes\iota$ on $\cD\otimes_\FF \KK$ 
as our $\tilde{\varphi}_0$. 
(The corresponding $\eta$ is related to $\bar{\eta}$ as follows: $\eta(t)=-\eta(tf)=\bar{\eta}(t)$ for all 
$t\in\overline{T}$.)
Any hermitian form $\tilde{B}$ must restrict to zero on each of the components $\cV\times\{0\}$ and $\{0\}\times\cV$ 
of $\tilde{\cV}$, so it is determined by its values $\tilde{B}((0,v),(w,0))$ for all $v,w\in\cV$. 
These values lie in $\cD\times\{0\}$ 
and hence we obtain a $\varphi_0$-sesquilinear form $B$ on $\cV$ by setting 
\[
(B(v,w),0):=\tilde{B}((0,v),(w,0)).
\]
It is straightforward to verify that $B$ is nondegenerate, has degree $\bar{g}_0=g_0\langle f\rangle$ with 
respect to the $\overline{G}$-grading, determines $\varphi$ by Equation \eqref{eq:GradAdj}, and has the following 
weak form of hermitian property: $\varphi_0(B(w,v))=\chi^{-1}(g_0)B(\chi^{-2}\cdot v,w)$ for all $v,w\in\cV$.
It follows that the parameter $\mu_0$ in \cite{EK13} is given by $\mu_0=\chi^{-1}(g_0)$. Since $\chi(f)=-1$, 
the element $g_0$ is determined by $\bar{g}_0$ and $\mu_0$.

\section*{Acknowledgments}

The authors want to thank Alberto Elduque for numerous fruitful discussions about gradings and mathematics in general.
The third author is also thankful to Professor Elduque for guidance as his thesis supervisor.

\end{document}